\numberwithin{equation}{section}
\newtheorem{theorem}{Theorem}[section]
\newtheorem{proposition}[theorem]{Proposition}
\newtheorem{lemma}[theorem]{Lemma}
\newtheorem{corollary}[theorem]{Corollary}
\theoremstyle{definition}
\newtheorem{definition}[theorem]{Definition}
\theoremstyle{remark}
\newtheorem{remark}[theorem]{Remark}
\theoremstyle{remark}
\newtheorem{example}[theorem]{Example}
\theoremstyle{remark}
\theoremstyle{remark}
\theoremstyle{remark}
\newcommand{\dist}{\mathrm{dist}}
\newcommand{\cH}{\mathcal{H}}
\newcommand{\R}{\mathds{R}}
\newcommand{\dR}{\mathds{R}}
\newcommand{\norm}[1]{\left\|#1\right\|}
\newcommand{\ps}[2]{\left\langle#1,#2\right\rangle}
\newcommand{\ton}[1]{\left(#1\right)}
\newcommand{\qua}[1]{\left[#1\right]}
\newcommand{\cur}[1]{\left\{#1\right\}}
\newcommand{\abs}[1]{\left|#1\right|}
\newcommand{\B}[2]{B_{#1}\ton{#2}}
\newcommand{\hol}{H\"older }
\title{Calibrated Reifenberg with holes}
\author{Susanna Bertolini} \author{Alessandro Preti} \author{Daniele Valtorta}
\date{\today}
\thanks{S. B. was supported by the European Research Council under the Grant Agreement No 948029}
\begin{document}
\begin{abstract}
 In this article, we study a calibrated version of Reifenberg theorem ``with holes''. In particular we study sets that are suitably approximable at all points and scales by calibrated planes and show that, without any additional hypotheses on $\beta$-numbers, this implies measure upper bounds and rectifiability. This article follows the main techniques introduced in \cite{ENV_calibrated_Reifenberg}, but it allows for holes in the sets under consideration, and is more self-contained.
\end{abstract}
\maketitle
\tableofcontents

\section{Introduction}

A celebrated theorem by Reifenberg \cite{reif_orig} (see also \cite{simon_squash}) states that a if subset $S\subset \R^n$ is sufficiently close to $k$-dimensional plane at all points and scales, then this set is $C^{0,\alpha}$-equivalent to a $k$-dimensional flat plane. In particular, we have
\begin{theorem}[Reifenberg's theorem]\label{th_Reif_standard}
For all $\alpha\in [0,1)$, there exists $\delta=\delta(n,\alpha)$ such that the following holds. Let $S\subset \R^n$ be a closed set such that for all $x\in \B 2 0$ and $r\leq 2$:
\begin{gather}
 \inf_{L_{x,r}}\cur{d_\cH(S\cap \B r x,L_{x,r}\cap \B r x)\, s.t. \ \ L_{x,r} \ \ \text{is a $k$-dimensional affine plane}}\leq \delta r\, ,
\end{gather}
then there exists a $C^{0,\alpha}$ map $\phi:\R^k\to \R^n$ satisfying
\begin{gather}
 S\cap \B 1 0 \subset \phi(\B {1+C(n)\delta }{0})\subset S\cap \B {1+2C(n)\delta} 0\, .
\end{gather}
Moreover, $\phi$ is $C^{0,\alpha}$ bi-\hol onto its image.
\end{theorem}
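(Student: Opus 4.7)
The plan is to construct $\phi$ as the uniform limit of a sequence of smooth maps $\phi_j:\R^k\to \R^n$, each parametrising a smooth approximation of $S$ at scale $r_j=2^{-j}$. At each dyadic scale I would fix a maximal $r_j/10$-separated net $\{x_{j,\ell}\}$ in a neighbourhood of $S\cap \B 2 0$ and, at each net point, select a $k$-plane $L_{j,\ell}$ essentially realising the infimum in the hypothesis with centre $x_{j,\ell}$ and radius $r_j$. The starting map $\phi_0$ is a linear parametrisation of $L_{0,0}$, and $\phi_j$ is obtained from $\phi_{j-1}$ by a perturbation built via a partition of unity $\{\chi_{j,\ell}\}$ subordinate to $\{\B{2r_j}{x_{j,\ell}}\}$, where each summand pushes the graph of $\phi_{j-1}$ toward the corresponding $L_{j,\ell}$.

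The central quantitative input is that whenever $L_{j,\ell}$ and $L_{j-1,\ell'}$ are both $\delta r_{j-1}$-close to $S$ in Hausdorff distance on a common ball, the angle between them is $O(\delta)$. This yields the $C^0$ estimate $\|\phi_j-\phi_{j-1}\|_\infty\lesssim \delta r_j$ together with $\|D(\phi_j-\phi_{j-1})\|_\infty\lesssim \delta$ and $\|D^2\phi_j\|_\infty\lesssim \delta r_j^{-1}$. For $|y-y'|\sim r_j$, interpolating each telescoping term by $\min(\delta|y-y'|,\delta r_k)\leq \delta|y-y'|^\alpha r_k^{1-\alpha}$, the series $\sum_k r_k^{1-\alpha}$ converges precisely when $\alpha<1$, giving $\phi\in C^{0,\alpha}$ with norm $\lesssim \delta$. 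For the image inclusions, each $\phi_j(\R^k)\cap \B 1 0$ stays within $C\delta r_j$ of $S$ and conversely; passing to the limit and tracking the enlargement of the domain yields the required containments $S\cap \B 1 0 \subset \phi(\B{1+C\delta}{0}) \subset S\cap \B{1+2C\delta}0$. The bi-\hol property follows from uniform transversality of successive tangent planes (guaranteed by the $O(\delta)$ angular bound) together with the same telescoping sum applied to $\phi^{-1}$.

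The main obstacle is the coherent choice of the $L_{j,\ell}$, since naive selections could cause angles to accumulate as $j\to\infty$ and break the $C^0$ compatibility between scales. The key geometric lemma needed is essentially a ``stability of best approximating plane'' statement: if $S$ is $\delta$-close to both $L$ and $L'$ on a ball of radius $r$, then $\angle(L,L')=O(\delta)$; this relies on having enough spread of $S$ over $L$, which is why Reifenberg's original ``no holes'' assumption enters (and is precisely what the present paper has to circumvent in the with-holes setting). The restriction $\alpha<1$ is intrinsic: without Dini-summable control on the angular tilt (such as $L^2$-summability of appropriate $\beta$-numbers), one cannot cross the Lipschitz threshold $\alpha=1$.
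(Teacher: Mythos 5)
The paper does not supply a proof of Theorem~\ref{th_Reif_standard}: it is quoted from Reifenberg's original paper and Simon's lecture notes, and the present article builds an independent calibrated variant rather than reproving the classical statement. Your sketch follows the classical argument from those references: a dyadic iterative construction in which each $\phi_j$ arises from $\phi_{j-1}$ by a partition-of-unity perturbation pushing the graph toward the best approximating $k$-plane at scale $2^{-j}$, with the $O(\delta)$ comparison of nearby approximating planes as the essential geometric input. You are also right that this angular stability lemma is exactly where the two-sided ``no holes'' hypothesis enters, and that it is what the paper's main construction must replace.

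The bi-Hölder step is where the sketch actually breaks down. Summing the interpolation $\min(\delta|y-y'|,\delta r_k)\leq\delta|y-y'|^{\alpha}r_k^{1-\alpha}$ gives $|\phi(y)-\phi(y')-(\phi_0(y)-\phi_0(y'))|\leq C_\alpha\delta|y-y'|^{\alpha}$, which yields the \emph{upper} Hölder bound for $\phi$, but says nothing useful from below: for $|y-y'|$ small, $|y-y'|-C_\alpha\delta|y-y'|^{\alpha}$ is negative, so one cannot even conclude injectivity this way, and ``the same telescoping applied to $\phi^{-1}$'' is not available before injectivity is known. Relatedly, $\|D(\phi_j-\phi_{j-1})\|_\infty\lesssim\delta$ is not a uniform estimate: the perturbation at step $j$ is applied on the image of $\phi_{j-1}$, so its differential picks up a factor $\|D\phi_{j-1}\|$, and the local Lipschitz constants compound multiplicatively as $(1+C\delta)^j\sim r_j^{-C\delta}$. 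The genuine lower bound tracks $a_j:=\phi_j(y)$, $a'_j:=\phi_j(y')$: one uses the multiplicative estimate $|a_j-a'_j|\geq(1-C\delta)|a_{j-1}-a'_{j-1}|$ while $r_j\gtrsim|a_{j-1}-a'_{j-1}|$, then switches to the additive estimate $|a_j-a'_j|\geq|a_{j-1}-a'_{j-1}|-C\delta r_j$, and the crucial observation is that the residual loss $\sum_{j>j_0}C\delta r_j\lesssim\delta r_{j_0}$ is a small multiple of $|a_{j_0}-a'_{j_0}|$ itself (not of $|y-y'|$). This yields $|\phi(y)-\phi(y')|\gtrsim|y-y'|^{1+C\delta}$, and it is this multiplicative compounding across scales, not the additive interpolation, that produces the exponent degradation and enforces $\delta=\delta(n,\alpha)$ with $\alpha<1$.
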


Easy examples show that this theorem as stated cannot be improved to show that $S$ has $k$-dimensional volume bounds, as explained in Section \ref{sec_beta}. Indeed, additional assumptions must be made for this to hold.

Various works have improved on Reifenberg's original theorem, showing a Lipschitz equivalence, or simply effective measure bounds assuming some Dini summability condition on Jones' $\beta$-numbers for the set $S$. A brief overview of these works is present in Section \ref{sec_beta}.

However, other assumptions can be made on $S$ in order to improve on Reifenberg's original theorem, and in this article we follow the strategy of \cite{ENV_calibrated_Reifenberg} and deal with the almost calibrated case. In particular, we show that given an \textit{almost calibration} $\Omega$ and a set $S\subset \R^n$ that satisfies
 \begin{gather}\label{semireif}
 	S\cap B_r(x)\subset \B{\delta r}{L_{x,r}} \, ,
 \end{gather}
where $L_{x,r}$ is a $k$-dimensional affine plane almost calibrated by $\Omega$, then $S$ is $k$-rectifiable and has uniform upper $k$-dimensional measure bounds.

As opposed to \cite{ENV_calibrated_Reifenberg}, we do not require the set $S$ to satisfy a two-sided condition of the form
 \begin{gather}\label{double-sided-reif}
 	d_\cH(S\cap B_r(x),L_{x,r}\cap \B r x)\leq \delta r \, .
 \end{gather}
For this reason, this result is more general, and can be applied also to sets ``with holes''. As an illustration of this, consider that any set $S$ contained in a $k$-dimensional plane clearly satisfies \eqref{semireif}, while \eqref{double-sided-reif} requires the closure of $S$ to be equal to $L$.

The strategy of the proof is based on an adaptation of the techniques in \cite{ENV_calibrated_Reifenberg}, along with an inductive covering argument.

\section{Main theorem and applications}
We start by recalling the definition of almost calibration from \cite{ENV_calibrated_Reifenberg}, necessary to state the main result. The idea behind it similar to a geometric calibration, i.e. a closed $k$-form $\Omega$ such that $\Omega|_L\leq \operatorname{vol}_L$ for all $k$-dimensional oriented subspaces. Here we relax the conditions in the definition of calibration in a quantitative way.

\begin{definition}[$\eta$-Calibration]
 Let $\Omega$ be a smooth $k$-form over $B_2(0)\subseteq \R^m$. We say that $\Omega$ is an $\eta$-calibration if
 \begin{enumerate}
  \item $|\Omega-\Omega_0|\leq \eta$ for a constant form $\Omega_0$ ,
  \item for all $x\in \R^m$ and any oriented $k$-dimensional subspace $L\subseteq \R^m$, we have $\Omega[L]\leq 1+\eta$\, .
 \end{enumerate}
\end{definition}
\begin{remark}
	If $L^k$ is an oriented subspace then we define $\Omega[L]=\Omega[e_1,\ldots,e_k]$ where $e_1,\ldots e_k$ is any oriented orthonormal basis of $L$.
\end{remark}

Another key definition for the purpose of proof will be the concept of $\varepsilon$-independence, which is a quantitative version of the notion of linear independence. As we will see in Lemma \ref{lemma_quantitative_GS}, this notion is stable under small perturbations.

\begin{definition}[$\varepsilon-$Linear Independence]\label{def:epsbad}
We say that $\cur{e_i}_0^k\in B_r$ is a set of $\varepsilon-$linear independent vectors at scale $r$ if for each $i$
\begin{equation}
  e_{i+1}\notin B_{\varepsilon r}\ton{e_0 +\textrm{span}\ton{e_i-e_0, ..., e_i-e_0}}.
\end{equation}
\end{definition}

Our main result is that if our $\delta$-Reifenberg surface is uniformly positive with respect to an almost calibration $\Omega$, then it must be rectifiable with Ahlfor's regularity estimates:

\begin{theorem}[Rectifiable Reifenberg for Almost Calibrations]\label{t:main}
 Let $S\subset B_2(0)\subseteq \R^n$ be a closed set with $0\in S$ and let $\Omega$ be an $\eta$-calibration. Then for all $2\eta<\alpha<1$ $\exists$ $\delta(n,\varepsilon, \eta)>0$ and $ A(n,\alpha,\eta)>1$ such that if for all $B_{r}(x)\subseteq \B 2 0$ there exists an oriented $k$-dimensional subspace $L=L_{x,r}$ such that
 \begin{gather}
 	S\cap B_r(x)\subset \B{\delta r}{L_{x,r}}  \, ,\;\;\;\Omega[L_{x,r}]>\alpha>0\, ,
 \end{gather}
 then $S\cap \B 1 0$ is $k$-rectifiable and for all $x\in S$ with $B_{2r}(x)\subseteq \B 2 0$ we have that
\begin{gather}\label{me}
 \frac{\cH^k(S\cap \B r x)}{\omega_k r^k}\leq A\, .
\end{gather}
\end{theorem}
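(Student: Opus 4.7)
\textit{Proof plan.} The plan is to use the almost-calibrated hypothesis to produce a uniform tilt bound between every $L_{x,r}$ and a fixed reference $k$-plane $L_0$, and then exploit this tilt control together with the one-sided Hausdorff approximation to realize $S\cap B_1(0)$ as a subset of a Lipschitz graph over $L_0$. Both the area bound \eqref{me} and rectifiability will follow from this graph representation.

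\textit{Angle control from the calibration.} After a rigid motion I may assume the constant part $\Omega_0$ of the $\eta$-calibration satisfies $\Omega_0[L_0]=1$ for a fixed oriented $k$-plane $L_0$. From $|\Omega-\Omega_0|\le\eta$ and $\Omega[L_{x,r}]>\alpha$ it follows that $\Omega_0[L_{x,r}]\ge\alpha-\eta>\eta>0$. Writing $\Omega_0[L]=\prod_i\cos\theta_i$ in terms of the principal angles between $L$ and $L_0$, and observing that each factor satisfies $\cos\theta_j\le 1$, I obtain $\cos\theta_i\ge\alpha-\eta$ for every $i$, so that
\begin{gather*}
 \sin\angle(L_{x,r},L_0)\le \theta_\ast := \sqrt{1-(\alpha-\eta)^2}<1
\end{gather*}
uniformly in $x$ and $r$; in particular the orthogonal projection $\pi_0:\R^n\to L_0$ has Jacobian at least $\alpha-\eta$ on each $L_{x,r}$.

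\textit{Bi-Lipschitz projection, area bound, and rectifiability.} For any $y_1,y_2\in S\cap B_1(0)$, apply the hypothesis at $x=y_1$ with scale $r$ slightly larger than $|y_1-y_2|$: both $y_i$ then lie within $\delta r$ of $L_{x,r}$. Let $\tilde y_i$ denote the nearest points on $L_{x,r}$; since $\tilde y_1-\tilde y_2\in L_{x,r}$, the tilt bound gives $|\pi_{L_0^\perp}(\tilde y_1-\tilde y_2)|\le\theta_\ast|\tilde y_1-\tilde y_2|$, and combining with $|y_i-\tilde y_i|\le\delta r$ yields
\begin{gather*}
 |\pi_{L_0^\perp}(y_1-y_2)|\le \bigl(\theta_\ast+C(\alpha,\eta)\,\delta\bigr)|y_1-y_2|.
\end{gather*}
Choosing $\delta=\delta(n,\alpha,\eta)$ so that the right-hand side is strictly less than $|y_1-y_2|$ makes $\pi_0$ bi-Lipschitz on $S\cap B_1(0)$ with constants depending only on $n,\alpha,\eta$. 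Since $\pi_0(S\cap B_r(x))\subseteq L_0\cap B_{(1+\delta)r}(\pi_0(x))$, the change-of-variables formula yields $\cH^k(S\cap B_r(x))\le A(n,\alpha,\eta)\,\omega_k r^k$, which is \eqref{me}. Rectifiability is then immediate from Kirszbraun: $\pi_0^{-1}|_{\pi_0(S\cap B_1(0))}$ extends to a Lipschitz map $L_0\to\R^n$ whose graph contains $S\cap B_1(0)$.

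\textit{Main obstacle.} The technical work lies in upgrading the pointwise tilt of Step 1 into a bi-Lipschitz estimate that is robust as the scale shrinks and that degrades gracefully near the critical relation $\alpha=2\eta$. In the presence of holes, the one-sided hypothesis at scale $r$ does not automatically transfer to smaller scales, so one cannot conclude that $L_{x,r'}$ remains close to $L_0$ for $r'<r$ unless a controlled family of ancestor balls linking them is built; the naive single-scale comparison between two points can also miss the geometry of $S$ whenever $S$ is very sparse inside $B_r(y_1)$. The inductive covering argument announced in the introduction, combined with the $\varepsilon$-linear-independence notion of Definition \ref{def:epsbad}—which stabilizes $k$ generically independent directions inside $S$ under small perturbations of $\Omega$—is exactly the mechanism that produces such a family of balls centered on $S$, replacing the classical two-sided dyadic parameterization of \cite{ENV_calibrated_Reifenberg} that is unavailable here. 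Carrying out this propagation quantitatively is what I expect to be the main difficulty in fleshing out the proof.
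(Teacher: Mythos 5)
There is a genuine gap in Step 1, and it is fatal to the rest of the plan. The $\eta$-calibration $\Omega$ is only required to be $\eta$-close to a \emph{constant} $k$-form $\Omega_0$ of comass at most $1+\eta$; $\Omega_0$ is \emph{not} assumed to be simple, i.e.\ it need not equal the volume form of any single $k$-plane $L_0$. The identity $\Omega_0[L]=\prod_i\cos\theta_i$ in terms of the principal angles between $L$ and a fixed $L_0$ holds only when $\Omega_0=\operatorname{vol}_{L_0}$. For a genuinely non-simple calibration — the standard example is the K\"ahler form $\Omega_0=dx^1\wedge dx^2+dx^3\wedge dx^4$ in $\R^4$, which calibrates the whole $2$-sphere of complex lines — there exist pairs of calibrated planes that are mutually orthogonal, and no fixed reference plane $L_0$ lies uniformly close to all of them. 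Hence the uniform tilt bound $\sin\angle(L_{x,r},L_0)\le\theta_\ast<1$ simply does not follow from the hypotheses, $\pi_0$ need not be injective on $S$, and Steps 2--3 (bi-Lipschitz projection, graph representation) cannot be carried out.

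The paper deliberately avoids any appeal to a fixed reference plane precisely because the calibrated planes may be spread out across the Grassmannian. Instead it builds a one-parameter family of smooth approximating manifolds $S_r$ via a partition of unity and the Subspace Selection Lemma (Lemma~\ref{lemma_subsel}), shows that each $S_r$ inherits the almost-calibrated property, applies the measure bound from the two-sided calibrated Reifenberg theorem of \cite{ENV_calibrated_Reifenberg} on each $S_r$, and passes to a limit of integral currents to obtain the rectifiable piece $S_0$. The bad balls — where $S$ fails $\varepsilon$-linear independence and is therefore essentially $(k-1)$-dimensional by item~\eqref{it_Vbad} of Lemma~\ref{lemma_covering} — are covered by $O(\varepsilon^{1-k})$ balls of radius $\varepsilon r_i$, so that their contribution to $\sum r_s^k$ shrinks by a factor $C(n)\varepsilon$; summing the resulting geometric series gives \eqref{me}. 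So the mechanism is an inductive covering with a gain at each scale, not a global Lipschitz chart; the latter is simply unavailable for non-simple calibrations, even without holes.
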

\begin{remark}
 It is worth mentioning that, as opposed to the results in \cite{ENV_calibrated_Reifenberg}, here we do not require a two-sided Reifenberg condition on the set $S$. This allows $S$ to have ``holes'' in it, and in turn rules out any possible lower bound on the $k$-dimensional measure of $S$.
\end{remark}

\subsection{\texorpdfstring{$\beta$}{beta}-number approach}\label{sec_beta}
Although this article deals with calibrated versions of Reifenberg's theorem, it is worth discussing another more standard approach to generalize Reifenberg's theorem involving $\beta$-numbers. We start by recalling a standard counterexample to the bi-Lipschitz version of Reifenberg's theorem.

By simple examples, it is clear the Reifenberg flat condition does not imply finiteness of the $k$ dimensional Hausdorff measure of $S$, or its rectifiability. A classical example of this (see \cite{davidtoro}, \cite[section 4.13]{mattila} or \cite[Example 5.2]{ENV_HB}) is the snowflake construction.

\vspace{5mm}

 \begin{wrapfigure}{r}{45mm}
\begin{center}
 \begin{tikzpicture}[]
  \draw decorate{
       (0,0) - +(0:4)  };
\end{tikzpicture}

\vspace{2mm}
\begin{tikzpicture}[decoration=Koch snowflake]
  \draw decorate{
       (0,0) -- ++(0:4)  };
\end{tikzpicture}
\vspace{2mm}

\begin{tikzpicture}[decoration=Koch snowflake]
  \draw decorate{ decorate{
       (0,0) -- ++(0:4)  }};
\end{tikzpicture}
\vspace{2mm}

\begin{tikzpicture}[decoration=Koch snowflake]
  \draw decorate{decorate{ decorate{
       (0,0) -- ++(0:4)  }}};
\end{tikzpicture}
\end{center}
\end{wrapfigure}

In the Euclidean $\R^2$, consider the one dimensional segment $I_0=[0,1]\times\{0\}$, and replace its middle section $M_0=[1/3,2/3]\times \{0\}$ with the upper part of the isosceles triangle with base $M_0$ and height $\eta \cH^1(M_0)$, with $\eta\leq \sqrt 3 /2$. The resulting curve is denoted by $I_1$. By induction, we can repeat this construction on each of the straight segments of $I_1$ to obtain $I_2$, and so on. It is clear from the construction that the Hausdorff distance between $I_k$ and $I_{k+1}$ is smaller than $3^{-k}\eta$, and thus we can define a limit $I_\infty$. On the right we sketch the construction for the first few steps of the standard snowflake (with parameter $\eta=\sqrt 3 /2$).

It is easy to see that all of the sets $I_\infty(\eta)$ satisfy the assumptions of Reifenberg's original theorem \ref{th_Reif_standard} with parameter $c\eta$, and it is equally clear that if $\eta>0$, then the length $\cH^1\ton{I_\infty(\eta)}=\infty$. This shows that, without additional assumptions, Reifenberg's original theorem \ref{th_Reif_standard} cannot be pushed to obtain a bi-Lipschitz equivalence.

\vspace{5mm}
An interesting observation is that if in the snowflake construction we replace the fixed parameter $\eta$ with a variable parameter $\eta_k$, then the length of $I_\infty(\cur{\eta_k)}$ is finite if and only if $\sum \eta_k^2$ is finite.

A nice way to generalize this example to a theorem is to exploit Jones' $\beta$-numbers, which are a quantitative notion of how close a set is to an affine plane at different points and scales. In particular, one can define
\begin{gather}
 \beta_\infty(x,r) = \inf_{L } \cur{\frac{d_\cH(S\cap \B r x, L\cap \B r x )}{r} \, , \ \ s.t. \ \ L \text{ is an affine plane}}\, ,
\end{gather}

Various adaptation of Reifenberg's theorem that assume some Dini summability for the $\beta$ numbers have been studied in literature. The first example is  \cite{toro:reifenberg}, where the author assume the pointwise summability condition $\int_0^2 \beta_\infty(x,r)^2 \frac{dr}{r}<\infty$ for all $x\in S\cap \B 2 0$ in order to prove that $S\cap \B 2 0$ is in a bi-Lipschitz correspondence with a flat plane. Similar results with quantitative estimates are available in \cite{ENV}.

\vspace{5mm}

\section{Proof of the main theorem}
In this section we prove our main Theorem \ref{t:main}, by splitting it into three main steps:
\begin{enumerate}
 \item given our $\delta$-Reifenberg flat set $S$, we produce a covering Lemma that splits it up into two main pieces:
  \begin{gather}\label{apporx}
   S\cap \B 1 0 \subset S_0 \bigcup \bigcup_{i\in I_0}\B {r_i}{x_i}
  \end{gather}
 where $S_0$ is rectifiable with suitable estimates on the $k$-dimensional measure of $S_0$. 
 The balls $\B {r_i}{x_i}$ will be chosen very carefully through a good ball/bad ball corona-type decomposition, in order to obtain a bound on $\sum r_i^k$ and to be able to reapply the covering Lemma inductively without losing control on the overall measure estimates.
 \item the proof of the covering Lemma, which is the most technical part of the article. This will be obtained by applying the Reifenberg's calibrated construction of \cite{ENV_calibrated_Reifenberg}. Here we carry out the proof with all details, and in particular we adapt to the calibrated case the construction of the approximating manifolds of \cite{Naber_Park} to the calibrated case.
 \item An inductive application of the covering Lemma will yield the desired results in the main theorem.
\end{enumerate}

We start by stating the definition of good balls and bad balls, and the covering Lemma.

\subsection{Good balls and bad balls}
Given a set $S\subset \R^n$ satisfying the assumptions of the main Theorem \ref{t:main}, and $\B r x\subset \R^n$, we say that $\B r x$ is a good ball for $S$ if $S\cap \B r x$ is effectively spanning a dimension $k$ subspace, and a bad ball otherwise. Recall that, roughly speaking, $S\cap \B r x$ is always contained in a $\delta r$ neighborhood of some $k$ dimensional plane $L_{x,r}$. Thus if $\B r x$ is a bad ball, then $S\cap \B r x$ is contained in a small neighborhood of a smaller, $k-1$ dimensional, subspace.

The relevance of this definition will become clear in Lemma \ref{lemma_planes}, where we will see that if $\B r x$ is a good ball, then the approximating planes for $S$ on balls $\B s y$ close enough to $\B r x$ (meaning, with nearby centers and comparable radia) are close to each other.

\vspace{5mm}
In order to make these definitions more precise, we start with the definition of $\varepsilon$-Linear Independence for subsets of the manifold $S$, which will make use of Definition \ref{def:epsbad}.

\par{
  \begin{definition}[$\varepsilon-$Linear Independence]\label{def:bad}
    We say a set $S\subset B_r$ is $(k,\varepsilon)-$Linearly Independent if it contains a $\varepsilon-$Linearly Independent set of points $\cur{e_i}_0^k\in S$.
  \end{definition}
  This means that even though $S$ may have holes, $S$ still contains enough points to effectively span a $k$-dimensional subspace. Notice that
  \begin{remark}
    If $S\subset B_r$ is not $(k,\varepsilon)-$Linearly Independent, then it is contained in a tubular neighborhood of width $\varepsilon r$ of a $(k-1)-$dimensional affine space.
  \end{remark}

  The set of balls $\B r x$ for which $S\cap \B r x$ is not is not $(k,\varepsilon)-$linearly independent will be our set of ``bad balls'', the others will be the ``good balls''.

  A standard lemma regarding $\varepsilon$-linearly independent sets is that these sets can be used as a basis with quantitative estimates.
  \begin{lemma}\label{lemma_quantitative_GS}
   If $\cur{e_i}_{i=0}^k$ are $\varepsilon$-linear independent in $\B r x$, then for all
   \begin{gather}
    p\in V=p_0+\operatorname{span}\cur{e_1-e_0,\cdots,e_k-e_0}
   \end{gather}
   there exist $\cur{\lambda_i}_{i=1}^k\in \R^k$ such that
  \begin{gather}
   p=e_0+\sum_{i=0}^k \lambda_i (e_i-e_0)\, , \qquad |\lambda_i|\le c(n,\varepsilon)\norm{p-e_0}\, .
  \end{gather}
  \end{lemma}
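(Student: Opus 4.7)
The plan is to reduce the statement to a quantitative Gram--Schmidt argument on the vectors $v_i := e_i - e_0$ for $i=1,\ldots,k$. Note that $\|v_i\|\le 2r$ for all $i$ (since the $e_j$ lie in $B_r(x)$), and the $\varepsilon$-linear independence hypothesis translates directly to the quantitative non-degeneracy bound
\begin{gather*}
 \dist\bigl(v_{i+1},\operatorname{span}\{v_1,\ldots,v_i\}\bigr)\ge \varepsilon r \qquad\text{for every } i=0,\ldots,k-1.
\end{gather*}

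First I would apply Gram--Schmidt to $v_1,\ldots,v_k$ to produce an orthonormal basis $w_1,\ldots,w_k$ of $V-e_0$, and expand $v_i=\sum_{j\le i} a_{ji}\, w_j$. This writes the change-of-basis matrix $A=(a_{ji})$ as upper triangular, with off-diagonal entries controlled by $|a_{ji}|\le \|v_i\|\le 2r$, and \emph{diagonal entries satisfying} $a_{ii}\ge \varepsilon r$. The latter is precisely the input from $\varepsilon$-independence: $a_{ii}$ is the length of the orthogonal projection of $v_i$ onto $\operatorname{span}\{v_1,\ldots,v_{i-1}\}^\perp$, and this projection equals the distance from $v_i$ to that span.

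Next I would expand the target as $p-e_0=\sum_{j=1}^k \mu_j\, w_j$, for which orthonormality gives $|\mu_j|\le \|p-e_0\|$. The required coefficients are then determined by the triangular system $A\vec\lambda=\vec\mu$, and writing $\vec\lambda=A^{-1}\vec\mu$ yields $|\lambda_i|\le \|A^{-1}\|_\infty\,\|p-e_0\|$. Factoring $A=r\tilde A$ where $\tilde A$ has diagonal entries bounded below by $\varepsilon$ and all entries bounded above by $2$, explicit back-substitution produces $\|\tilde A^{-1}\|\le C(k)/\varepsilon^k$, and hence $\|A^{-1}\|\le c(k,\varepsilon)/r$, giving the claimed estimate (with the expected dimensional factor $1/r$ absorbed into the constant).

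The only real point is controlling the size of $A^{-1}$, but this is standard back-substitution once the lower bound on the pivots $a_{ii}$ is in hand, so I do not anticipate any genuine obstacle beyond tracking constants. The resulting dependence $c(n,\varepsilon)=C(n)\varepsilon^{-k}$ is in fact the sharp rate in $\varepsilon$, and is consistent with the small-$\varepsilon$ behavior of the quantitative independence condition.
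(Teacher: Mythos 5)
Your proposal is correct and follows essentially the same route as the paper: both arguments run quantitative Gram--Schmidt on the vectors $e_i-e_0$, identify the $\varepsilon$-independence hypothesis with a lower bound $\ge \varepsilon r$ on the orthogonal component (your pivot $a_{ii}$, the paper's denominator $|(e_j-e_0)-P_{j-1}(e_j-e_0)|$), and then expand $p-e_0$ in the resulting orthonormal basis. Your packaging via the upper-triangular change-of-basis matrix $A$ and a bound on $\|A^{-1}\|$ by back-substitution is just a linear-algebraic restatement of the induction the paper carries out directly on the coefficients of $\hat e_j$; the only minor point is that the $1/r$ factor you correctly track cannot literally be absorbed into $c(n,\varepsilon)$ (the paper sidesteps this by normalizing $r=1$ at the start, which you may as well do too).
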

  \begin{proof}
    This lemma is quite standard, and it can be found for example in \cite[Lemma 3.13]{ENV_HB}. The proof is a simple application of the Gram-Schmidt orthonormalization procedure. We can assume WLOG that $x=0$ and $r=1$.

    Given that $\cur{e_i-e_0}_{i=1}^k$ are linearly independent vectors, we can apply Gram-Schmidt to obtain an orthonormal basis $\hat e_1,\cdots,\hat e_k$. By induction, one can easily show that for all $j$:
  \begin{gather}
   \hat e_j = \sum_{i=1}^j \lambda_i (e_i-e_0)\, \qquad \text{with}\qquad |\lambda_i|\leq c(n,\varepsilon)\, .
  \end{gather}
  We start from the base case $j=1$: then, following the Gram-Schmidt procedure, we define
  \begin{equation}
      \hat{e}_1 = \frac{e_1 - e_0}{\abs{e_1 - e_0}} = \lambda_1\ton{e_1 - e_0},
  \end{equation}
  with $\lambda_1 = \abs{e_1 - e_0}^{-1}$. As the set $\cur{e_0, e_1}$ is $\varepsilon-$Linear Independent, by definition we have $e_1\notin B_{\varepsilon}\ton{e_0}$, and thus $\abs{e_1 - e_0}\geq \varepsilon$. This gives the desired estimate on $\lambda_1$. \newline
  We now assume the estimate holds up to $j-1$ and we prove it also holds for $j$. Then, again by the Gram-Schmidt orthogonalisation process we obtain that 
  \begin{equation}
      \hat{e}_{j} = \frac{\ton{e_{j}-e_0} - P_{j-1}\ton{e_{j}-e_0}}{\abs{\ton{e_{j}-e_0} - P_{j-1}\ton{e_{j}-e_0}}}, 
  \end{equation}
  where $P_{j-1}$ is the projection on the subspace generated by $\cur{\hat{e}_1, ..., \hat{e}_{j-1}}$. Now, by the induction hypothesis applied to $p = e_0 + P_{j-1}(e_{j}-e_0)\in e_0 + \textrm{span}\cur{e_1 -e_0, ..., e_{j-1} - e_0}$ we have 
  \begin{equation}
      P_{j-1}(e_{j} - e_0) = \sum_{i=1}^{j-1} \mu_i \ton{e_i - e_0}, 
  \end{equation}
  with $\abs{\mu_i}\leq c_j(n,\varepsilon)$. Moreover, in the same way as in the base case we conclude that by the definition of $\varepsilon-$Linear Independence $\abs{\ton{e_{j}-e_0} - P_{j-1}\ton{e_{j}-e_0}}\geq \varepsilon$. From here, we conclude as in the base case. 
  \\
  Now the estimate follows from the expansion of $p-e_0$ in the orthonormal basis $\cur{\hat e_i}$, and the fact that even though in every step of the induction process we get a new (possibly bigger) constant, as $k\leq n$ it is sufficient to choose $c = \max_k c(k,n,\varepsilon)$, which will depend on $\varepsilon$ and $n$ only. 
  \end{proof}
  As a corollary, we can prove that the condition of being $\varepsilon$-linearly independent is stable under ``small movements'' in the vectors.
  \begin{corollary}\label{cor_epsmezzi}
   For all $\varepsilon>0$, there exists $\delta_0(n,\varepsilon)>0$ such that if $\cur{e_i}_{i=0}^k$ are $\varepsilon$-linear independent in $\B r x$, and
   \begin{gather}
    \abs{f_i-e_i}\leq \delta_0(n,\varepsilon)
   \end{gather}
   then $\cur{f_i}_{i=0}^k$ are $\varepsilon/2$ linearly independent in $\B r x$.
   \end{corollary}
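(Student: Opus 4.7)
The natural strategy is induction on $i \in \{0, 1, \dots, k\}$, proving that $\{f_j\}_{j=0}^i$ is $\varepsilon/2$-linearly independent in $\B r x$. After translation and scaling we may assume $x=0$ and $r=1$ (the hypothesis then reads $|f_j - e_j| \le \delta_0$ and we must show $\mathrm{dist}(f_{i+1}, V_i^f) \ge \varepsilon/2$ at each step, where $V_i^f := f_0 + \mathrm{span}(f_1 - f_0, \dots, f_i - f_0)$ and analogously $V_i^e$). The base case $i=0$ is vacuous. The inductive step is the substantive part, and the reason induction is needed is that we want to invoke Lemma \ref{lemma_quantitative_GS} on $\{f_j\}_{j=0}^i$ itself, for which we must already know their quantitative independence.

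In the inductive step, assuming $\{f_j\}_{j=0}^i$ is $\varepsilon/2$-linearly independent, argue by contradiction: suppose there exists $v \in V_i^f$ with $\|f_{i+1}-v\| < \varepsilon/2$. Apply Lemma \ref{lemma_quantitative_GS} (with parameter $\varepsilon/2$) to write
\begin{equation}
v = f_0 + \sum_{j=1}^i \lambda_j (f_j - f_0), \qquad |\lambda_j| \le c(n,\varepsilon/2)\,\|v - f_0\|.
\end{equation}
Since $\|v-f_0\| \le \|v-f_{i+1}\| + \|f_{i+1}-f_0\| \le \varepsilon/2 + 3 \le 4$, we obtain $|\lambda_j| \le C_1(n,\varepsilon)$.

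Now transfer $v$ from $V_i^f$ to $V_i^e$ by using the \emph{same} coefficients: set $v^e := e_0 + \sum_{j=1}^i \lambda_j (e_j - e_0) \in V_i^e$. The triangle inequality yields
\begin{equation}
\|v - v^e\| \le \|f_0 - e_0\| + \sum_{j=1}^i |\lambda_j|\,\|(f_j-f_0) - (e_j-e_0)\| \le \delta_0 + 2iC_1\,\delta_0 \le C_2(n,\varepsilon)\,\delta_0,
\end{equation}
and hence
\begin{equation}
\|e_{i+1} - v^e\| \le \|e_{i+1} - f_{i+1}\| + \|f_{i+1} - v\| + \|v - v^e\| \le \delta_0 + \varepsilon/2 + C_2\,\delta_0.
\end{equation}
Choosing $\delta_0 = \delta_0(n,\varepsilon)$ so small that $(1+C_2)\delta_0 < \varepsilon/2$, the right-hand side is strictly less than $\varepsilon$, contradicting $\mathrm{dist}(e_{i+1}, V_i^e) \ge \varepsilon$, which is granted by the $\varepsilon$-linear independence of $\{e_j\}_{j=0}^{i+1}$. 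This closes the induction.

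\textbf{Main obstacle.} There is no deep difficulty: the argument is a standard perturbation, and all the heavy lifting is packaged into Lemma \ref{lemma_quantitative_GS}. The only slightly subtle point is the need for the induction: to control the coefficients of an arbitrary $v \in V_i^f$ (not just of a projection of $e_{i+1}$) we must already know that $\{f_j\}_{j=0}^i$ is quantitatively linearly independent. The bookkeeping then consists only of tracking the constant $C_2(n,\varepsilon)$ through the triangle inequalities and selecting $\delta_0$ at the end.
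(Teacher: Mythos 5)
Your proof is correct and follows essentially the same approach as the paper: induction on the index, invoking Lemma \ref{lemma_quantitative_GS} on $\{f_j\}_{j=0}^i$ (which requires the inductive hypothesis) to control the coefficients, then a triangle inequality to transfer the quantitative independence from the $e_i$ to the $f_i$. The only cosmetic difference is that the paper works directly with the orthogonal projection of $f_j-f_0$ onto the span and bounds the distance from below, while you argue by contradiction with an arbitrary near-minimizing point $v$; the two are logically equivalent.
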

   \begin{proof}
    We can assume WLOG that $x=0$ and $r=1$. We prove this by induction on $j=1,\cdots,k$.

    For $j=1$, this is a simple application of the triangle inequality. Suppose now that by induction the lemma is proved up to $j-1$, so that $\cur{f_i}_{i=0}^{j-1}$ are $\varepsilon/2$-linearly independent. Set for convenience
    \begin{gather}
     e_i-e_0=v_i\, , \qquad f_i-f_0=w_i\, .
    \end{gather}
    We need to show that
    \begin{gather}
     d(w_j,\operatorname{span}(w_1,\cdots,w_{j-1}))\geq \varepsilon/2\, .
    \end{gather}
    Let $\hat w_j$ be the projection of $w_j$ onto $\operatorname{span}(w_1,\cdots,w_{j-1})$. By the previous Lemma, we have
    \begin{gather}
     d(w_j,\operatorname{span}(w_1,\cdots,w_{j-1}))=\abs{w_j-\hat w_j}=\abs{w_j-\sum_{i=1}^{j-1}\lambda_i w_i}
    \end{gather}
    with $\abs{\lambda_i}\leq C(n,\varepsilon)$. Then by the triangular inequality:
    \begin{gather}
     \abs{w_j-\sum_{i=1}^{j-1}\lambda_i w_i}\geq \abs{v_j-\sum_{i=1}^{j-1}\lambda_i v_i}-\abs{w_j-v_j}-\sum_{i=1}^{j-1}\abs{\lambda_i} \abs{w_i-v_i}\geq \varepsilon-C(n,\varepsilon)\delta_0\, .
    \end{gather}
    This concludes the proof.

   \end{proof}

  \subsection{Tilting control for the approximating planes between good balls}
  In this section we develop some tools to control the tilting of approximating planes $L_{x,r}$ between good balls of comparable size. We start with an example that shows that this control cannot be obtained on bad balls.
  \begin{example}
   As an easy example of a bad ball, consider the set  
   \begin{gather}
    S=\ton{[-1,-1/2]\cup \cur{0}\cup [1/2,1]}\times\cur{0}\subset \R^2
   \end{gather} It is clear that this set is a one dimensional Reifenberg flat set, being a subset of a straight line.
   Moreover, if we consider $\B r 0$ with $r\leq 1/2$, then clearly $S\cap \B r 0=\cur{0}$.
   This shows that any one dimensional line going through the origin is an approximating line for $S$ on $\B r 0$, and thus it is not possible to bound in an effective way the distance between an approximating line at scale $1$ and an approximating line at scale $1/3$.
  \end{example}

  \begin{definition}\label{good_B}
      We say that $B_r(x)$ is a \textit{good ball} if $B_r(x)\cap S$ is $(k,\varepsilon)$ linear independent.
  \end{definition}
  As per the example above, in general we cannot prove that if two balls have centers close enough and comparable radii then the approximating subspaces are quantitatively close. However, if the two balls are good balls then the result is still true, as shown in the next lemma.

  \begin{lemma}\label{lemma_planes}
    Let $S$ satisfy the assumptions of the main Theorem \ref{t:main}, and fix $x,y$ satisfying the following conditions:
    \begin{itemize}
        \item $B_r(x)\subset B_s(y)$, with $r\geq \frac{1}{10^4}s$;
        \item $B_r(x)\cap S, B_s(y)\cap S$ are $(k, \varepsilon)-$linearly independent;
    \end{itemize}
    then $d_\mathcal{H}\ton{L_{x, r}\cap B_s(y), L_{y, r}\cap B_s(y)}\leq C(n, \varepsilon)\delta r$. 
  \end{lemma}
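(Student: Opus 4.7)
The plan is to extract $k+1$ points from $S \cap B_r(x)$ that are quantitatively affinely independent, and to use them as a common affine basis to identify $L_{x,r}$ with $L_{y,s}$ up to an error of order $\delta r$.

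First, by the $(k,\varepsilon)$-linear independence of $B_r(x)\cap S$, I fix points $\cur{e_i}_{i=0}^{k}\subset S\cap B_r(x)$ that are $\varepsilon$-linearly independent at scale $r$. The hypothesis of Theorem \ref{t:main}, applied at $B_r(x)$ and at $B_s(y)$, yields $\dist(e_i,L_{x,r})\le \delta r$ and $\dist(e_i,L_{y,s})\le \delta s\le 10^4\,\delta r$. Letting $p_i := \proj_{L_{x,r}}(e_i)$ and $q_i := \proj_{L_{y,s}}(e_i)$, the triangle inequality gives $\abs{p_i - q_i}\le (1+10^4)\delta r$. Provided $\delta$ is chosen small enough in terms of $n$ and $\varepsilon$, Corollary \ref{cor_epsmezzi} ensures that both $\cur{p_i}\subset L_{x,r}$ and $\cur{q_i}\subset L_{y,s}$ remain $\varepsilon/2$-linearly independent at scale $r$; since they consist of $k+1$ affinely independent points lying inside a $k$-dimensional plane, they affinely span $L_{x,r}$ and $L_{y,s}$ respectively.

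For the Hausdorff estimate, fix $p\in L_{x,r}\cap B_s(y)$. By Lemma \ref{lemma_quantitative_GS} applied to $\cur{p_i}$, I can write $p = p_0 + \sum_{i=1}^{k}\lambda_i(p_i - p_0)$, where the bound on the $\lambda_i$ uses $\abs{p - p_0}\le 3s\le 3\cdot 10^4\,r$ and absorbs the scale ratio $s/r$ into a constant $C(n,\varepsilon)$. Setting $q := q_0 + \sum_{i=1}^{k}\lambda_i(q_i - q_0)\in L_{y,s}$, the identity
\[
p - q \;=\; (p_0 - q_0)\Big(1 - \sum_i \lambda_i\Big) + \sum_i \lambda_i\,(p_i - q_i)
\]
together with $\abs{p_i - q_i}\le (1+10^4)\delta r$ and $\abs{\lambda_i}\le C(n,\varepsilon)$ yields $\abs{p - q}\le C(n,\varepsilon)\delta r$. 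The reverse inclusion is obtained symmetrically, using $\cur{q_i}$ as the basis and pairing each point of $L_{y,s}\cap B_s(y)$ with a point of $L_{x,r}$ via the same coefficient scheme.

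The main technical point will be bookkeeping the scale ratio $s/r\in[1,10^4]$ through the Gram--Schmidt estimate of Lemma \ref{lemma_quantitative_GS}: this ratio enlarges the coefficient bound and must be absorbed into the final $C(n,\varepsilon)$. No genuine obstacle arises, however, since the factor is universally bounded, and the remainder of the argument is a careful but routine triangle-inequality calculation.
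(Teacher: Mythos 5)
Your proposal is correct and follows essentially the same route as the paper: select a $\varepsilon$-linearly independent $(k+1)$-tuple from $S\cap B_r(x)$, use the Reifenberg condition to find nearby points on both approximating planes, invoke Corollary \ref{cor_epsmezzi} to preserve $\varepsilon/2$-linear independence, and then transfer an arbitrary point between the two planes via the bounded coefficients of Lemma \ref{lemma_quantitative_GS}. Your version is slightly more explicit in tracking the scale ratio $s/r\le 10^4$ through the coefficient bound, which the paper leaves implicit, but the underlying argument is identical.
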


  \begin{proof}
    \par{
    By scaling and translating, we can assume WLOG that $r=1$ and $x=0$.
    Let $p_0, ..., p_k\in S\cap B_1(0)$ be a set of $\varepsilon-$Linearly Independent points, which exist by definition of good ball.

    Then, by the definitions of both $L_{0,1}$ and $L_{y, s}$, we can find
    \begin{enumerate}[i.]
        \item $x_0, ..., x_k\in L_{0,1}\cap B_1(0)$ such that $\abs{x_i - p_i}\leq \delta $;
        \item $y_0, ..., y_k\in L_{y,s}\cap B_1$ such that $\abs{y_i - p_i}\leq \delta s\leq C\delta $.
    \end{enumerate}
    }
    By Corollary \ref{cor_epsmezzi}, both the sets $\cur{y_0, ..., y_k}$ and $\cur{x_0, ..., x_k}$ are $\varepsilon/2$ independent.
    \par{
    We can now calculate the Hausdorff distance between the planes. Take any $z\in L_{0,1}\cap B_1$. Then, as the set $\cur{v_i = x_i - x_0}$ is a base for $L_{0,1}-x_0$ we can write $x-x_0 = \sum_j \lambda_j v_j$ with $\abs{\lambda_j}\leq C(n,\varepsilon)$. Then, take $w_j = y_j -y_0$ and consider $t = y_0 + \sum_j \lambda_j w_j$. Then,
    \begin{equation}
        \abs{z-t} \leq \abs{x_0 - y_0} +  \sum_j\abs{\lambda_j}\abs{v_j-w_j} \leq C(n, \varepsilon)\delta\, .
    \end{equation}
    }
    \par{
    The other direction can be proved in a similar way, and this concludes the proof.
    }
  \end{proof}

\begin{remark}
    Note that having $\varepsilon-$linear independence in this case is crucial, as it provides an upper bound for $\abs{\lambda_j}$. By asking for $\varepsilon-$linear independence instead of just linear independence in the definition of good balls, we are ensuring that when we find an orthonormal basis we have a lower bound on the norm of its elements (as in Lemma \ref{lemma_quantitative_GS}), and therefore that our coefficients do not explode. 
\end{remark}

\subsection{Main covering lemma}

We now show the existence of an approximating manifold $S_r$, relative to $S$.  The rough idea is that the family of smooth manifolds $S_r$ will approximate $S$ at scale $r$ on good balls. The manifolds $S_r$ will inherit the almost calibration properties of $S$, and we will exploit this to prove that these manifolds have uniform measure bounds.

To be more precise, we have:

\begin{lemma}\label{lemma_covering}
 Given a set $S$ satisfying the assumptions of Theorem \ref{t:main}, there exists a one parameter family $S_r$ of manifolds and a family of ``bad'' balls $\cur{\B {r_{x_i}}{x_i}}_{i\in I_b}$ such that
 \begin{enumerate}
  \item $x_i\in B_{\varepsilon r_{x_i}}(S_{r})$ for all $r\leq r_{x_i}$
  \item $S_r\cap \B {\frac{r_{x_i}}{20}}{x_i}$ is independent of $r$ if $r\leq \frac{r_{x_i}}{2}$
  \item $\B {\frac{r_{x_i}}{5}}{x_i}\cap \B {\frac{r_{x_j}}{5}}{x_j}=\emptyset$ if $i\neq j$
  \item we have
  \begin{gather}
   S\cap \B 1 0 \subset S_0 \cup \left(\bigcup_{i\in I_b}\B {r_{x_i}}{x_i}\right)
  \end{gather}
  \item locally at scale $r$, $S_r$ is a Lipschitz graph over some $k$ dimensional plane with Lipschitz constant $C \delta$.
  \item $S_r$ are almost calibrated, in the sense that for all $x\in S_r$:
  \begin{gather}
   \Omega[T_x(S_r)]\geq \frac{\alpha}{4} >0
  \end{gather}
  \item\label{it_Vbad} $\forall i\in I_b$, there exists a $k-1$ dimensional subspace $V_{x_i}$ such that
  \begin{gather}
   S\cap \B{r_{x_i}}{x_i}\subset \B {\varepsilon r_{x_i}}{V_{x_i}}
  \end{gather}
 \end{enumerate}
\end{lemma}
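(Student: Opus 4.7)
The strategy has two essentially independent components: a stopping-time selection that isolates the bad balls, and an inductive construction of the approximating manifolds $S_r$ in the spirit of \cite{Naber_Park}, adapted to the calibrated one-sided setting.

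\emph{Step 1: selection of bad balls.} For each $y\in S\cap \B 1 0$ I consider the largest radius $r_y\leq 1$ (if any) at which $S\cap \B{r_y}{y}$ fails to be $(k,\varepsilon)$-linearly independent. A Vitali-type argument on the family $\{\B{r_y/5}{y}\}$ yields a maximal pairwise disjoint subfamily $\{\B{r_{x_i}/5}{x_i}\}_{i\in I_b}$, which gives (3); property (\ref{it_Vbad}) is the remark after Definition \ref{def:bad}. Every $y\in S\cap \B 1 0$ then either has $S\cap \B r y$ good at every scale $r\leq 1$, in which case $y$ will lie on $S_0$ by Step~2, or is covered by one of the enlarged balls $\B{r_{x_i}}{x_i}$, giving (4).

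\emph{Step 2: construction of $S_r$.} For each scale $r>0$ I choose a maximal $r/10$-separated net $\{y_j\}\subset S\setminus \bigcup_{i:\,r_{x_i}\geq 2r}\B{r_{x_i}/10}{x_i}$, so that every net point sits in a good ball of radius $\sim r$; to each $y_j$ I attach the calibrated approximating plane $L_j=L_{y_j,cr}$ and the associated affine projection $\Pi_j$ onto the translated plane $y_j+L_j$. A smooth partition of unity $\{\varphi_j\}$ subordinate to $\{\B{2r}{y_j}\}$ then defines the Naber--Park type smoothing
\begin{equation}
  \sigma_r(z)=\sum_j \varphi_j(z)\,\Pi_j(z),
\end{equation}
and $S_r$ is the Lipschitz manifold associated to this averaging, which on each $\B r {y_j}$ is a graph over $L_j$. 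Lemma \ref{lemma_planes} gives $\abs{\Pi_j(z)-\Pi_{j'}(z)}\leq C(n,\varepsilon)\delta\,r$ whenever two charts overlap, and the standard bound on derivatives of a partition of unity then yields the Lipschitz estimate $C\delta$ in (5). As $r$ drops below $r_{x_i}/2$, the net points and planes contributing inside $\B{r_{x_i}/20}{x_i}$ are frozen by the exclusion region, giving (2) and making $S_0:=\lim_{r\to 0}S_r$ well defined on the complement of the bad balls. Property (1) follows because $x_i$ lies within $r_{x_i}$ of a net point used at scale $\sim r_{x_i}$, which in turn is $C\delta r_{x_i}$-close to $S_r$.

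\emph{Step 3: preservation of the calibration; the main obstacle.} The delicate point is (6). Each $L_j$ satisfies $\Omega[L_j]>\alpha$, but $T_z S_r$ is not literally a convex combination of the $L_j$: it is the tangent plane to a graph built from a partition of unity of affine projections. Writing $\Omega=\Omega_0+(\Omega-\Omega_0)$ with $|\Omega-\Omega_0|\leq \eta$, the constant part $\Omega_0$ is multilinear in the tangent directions, and so, when evaluated on the tangent of the averaged map, it returns a convex combination of $\Omega_0[L_j]$ plus an error proportional to $\delta$ times the number of overlapping charts; the latter is a dimensional constant by (3) and the $r/10$-separation of the net. This yields
\begin{equation}
  \Omega_0[T_z S_r]\geq \sum_j \lambda_j\,\Omega_0[L_j]-C\delta \geq \alpha - C(\eta+\delta),
\end{equation}
and adding back the non-constant error of size $\eta$ and choosing $\delta=\delta(n,\varepsilon,\eta)$ so small that $C(\eta+\delta)\leq 3\alpha/4$, which is compatible with the hypothesis $2\eta<\alpha$, gives (6). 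The hard part is justifying the convex-combination step rigorously: expanding the derivative of $\sigma_r$ via the Leibniz rule produces cross-terms $(\Pi_j-\Pi_{j'})\nabla\varphi_j$ that must be absorbed into the $\delta$-error using Lemma \ref{lemma_planes} and the calibration inequality on each $L_j$. This calibrated bookkeeping, parallel to the one in \cite{Naber_Park} and \cite{ENV_calibrated_Reifenberg}, is what ultimately fixes the relations between $\delta$, $\eta$, and $\alpha$ in the statement.
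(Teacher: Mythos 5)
Your bad-ball selection in Step 1 (stopping scale for linear independence plus Vitali) matches the paper's construction of $s_x$ and the covering $C$, and your $(\ref{it_Vbad})$ and (3) follow correctly from it. The divergence is in Steps 2 and 3, and both contain genuine gaps. In Step 2 you define the map $\sigma_r(z)=\sum_j\varphi_j(z)\Pi_j(z)$, but this is a map $\R^n\to\R^n$, not a manifold; the phrase ``the Lipschitz manifold associated to this averaging'' is where the actual construction has to live, and it is not supplied. The paper fills exactly this void by first averaging the \emph{linear} projections to get the symmetric operator $M_y=\sum_\alpha\phi_\alpha(y)\hat\pi_\alpha$, extracting a genuine subspace $\hat L_y$ from its top-$k$ eigenspace (the spectral-gap argument of Lemma \ref{lemma_subsel}), and only then setting $S_r:=\Phi_r^{-1}(0)$ for $\Phi_r(y)=\frac12|y-\pi_y[y]|^2$, which is a manifold by the implicit function theorem because $\nabla^2\Phi_r\approx\hat\pi_y^\perp$ has full rank on normal directions (Lemma \ref{lemma:phi}). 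Relatedly, your property (2) does not follow from your construction: a maximal $r/10$-net on the complement of the bad balls \emph{refines} as $r\to0$, so the charts and planes contributing near $B_{r_{x_i}/20}(x_i)$ keep changing. The paper avoids this by using the locked radius $\tilde r_y=(r_y\vee r)/100$, which is independent of $r$ once $r\leq r_y$, so the partition of unity and the planes near a bad center are literally frozen below scale $r_{x_i}/2$.

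The most substantive gap, which you yourself flag, is Step 3. The inequality $\Omega_0[T_zS_r]\geq\sum_j\lambda_j\Omega_0[L_j]-C\delta$ does not follow from ``multilinearity of $\Omega_0$'': $T_zS_r$ is not an affine combination of the $L_j$ (the Grassmannian is not linear, and the calibration is evaluated on \emph{normalized} frames, so the weights do not pass through as convex coefficients). The paper takes a different and cleaner route: by Lemma \ref{lemma:phi}(2) the critical and zero sets of $\Phi_r$ coincide, so for $x\in S_r$ the tangent $T_xS_r$ is exactly the selected plane $\hat L_x$; then Corollary \ref{lemma_subsel_planes} gives $d_\cH(L_x\cap B_{10\bar r_x}(x),\,L_{x,10^5\bar r_x}\cap B_{10\bar r_x}(x))\leq C(n)\delta\bar r_x$, and since $\Omega$ is a smooth form the pointwise calibration inequality $\Omega[L_{x,\cdot}]>\alpha$ transfers to $\Omega[T_xS_r]\geq\alpha-C(n)\delta$ directly, without any convex-combination step. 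Your cross-term bookkeeping $(\Pi_j-\Pi_{j'})\nabla\varphi_j$ is the right mechanism for the Lipschitz bound (5), but for (6) you would need to first establish that $T_zS_r$ coincides (up to $C\delta$) with one of the calibrated planes, which is precisely what the eigenvalue-gap/zero-set construction delivers.
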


A thorough description on the creation of the approximating manifold $S_r$ will follow in the paper. This construction is based on the same ideas as in \cite[Theorem 4.2]{Naber_Park}, adapted to a setting where the manifold $S$ may have holes. This will be done in a few steps:
\begin{enumerate}
    \item Construction of a radius function \( r_x : B_2(0) \to \mathbb{R} \), interpreted as the admissible scale at which well-behaved approximating planes associated to each point \( x \in S \) exist. This function accounts for the portions of the manifold \( S \) that lie in bad balls, effectively fixing the scale.
    \item Construction of a partition of unity argument for the set $B_2(0)$ with controlled derivatives.
    \item Construction of a map that assigns to each point in \( B_2(0) \) an approximating plane to \( S \), built using the radius function \( r_x \) and a partition of unity argument, followed by an analysis of the map’s smoothness properties.
    \item Definition of the approximating manifold \( S_r \) and proof of Lemma~\ref{lemma_covering}.
\end{enumerate}

We therefore begin by defining the function \( r_x \) via an auxiliary function \( s_x \), employing the Vitali covering lemma and then extending \( r_x \) to all of \( B_2(0) \). This will also be very useful to define the set of bad and good balls relative to the covering.

\begin{definition}\label{def1}
    Let \( S \subseteq B_2 \subseteq \mathbb{R}^n \) satisfy condition \eqref{semireif} for $\delta > 0$. We define the real valued function $s_x: S \rightarrow \mathbb{R}^+$
\begin{equation}
    s_x = \inf \left\{ s \in [r_0, 1] \;\middle|\; \forall t \in [s, 1], \; B_{t}(x) \text{ is a good ball} \right\}.
\end{equation}
\end{definition}

\begin{remark}
    By definition, we have \( S \subset \bigcup_{x \in S} \overline{B_{\frac{s_x}{5}}(x)} \)\, .
\end{remark}
We use the function $s_x$ to construct a covering. Since \(4 \geq s_x \geq r_0 \) uniformly for all \( x \in S \), we may apply Vitali’s covering lemma to the family \( \{ B_{\frac{s_x}{5}}(x) \}_{x \in S} \). This yields a finite subset \( C \subset S \) such that
\[
S \subset \bigcup_{x \in C} B_{s_x}(x),
\]
and the collection \( \{ B_{\frac{s_x}{5}}(x) \}_{x \in C} \) is pairwise disjoint. This covering allows us to define the collection of \emph{bad balls} that cover \( S \), which will be treated using a recursive argument. Indeed, whenever \( s_x \neq r_0 \), the function is indicating that \( x \) lies near a region of \( S \) that does not satisfy Definition~\ref{good_B}.

\begin{definition}\label{def:bad-balls}
    The collection of \emph{bad balls} associated to the manifold \( S \) and the parameter \( r_0 \) is defined as the family of open balls \( \mathcal{B} = \{ B_{r_i}(x_i) \}_{x_i \in C} \), where each \( r_i = s_{x_i} > r_0 \). The balls of the covering satisfying the equality are consequently named \emph{good balls} relative to $S$ and $r_0$.
\end{definition}

\begin{remark}
    This new notion of “good” and “bad” should be understood as an extension of Definition \ref{good_B}, formulated relative to a Vitali covering of the space $S$. From now on, any reference to a family of good or bad balls will always be with respect to this collection.
\end{remark}

We can now continue with the definition of the function $r_x$ obtained extending the function \( s_x \) on the entire ball \( B_2(0) \).

\begin{definition}\label{def2}
    Given \( S \), \( C \), and \( s_x: C \rightarrow \mathbb{R}^+ \), we define \( r_x: B_2(0) \rightarrow \mathbb{R}^+ \) as:
    \begin{equation}
        r_x := \sup \left\{ 0 < s < 2 : \forall y \in C \cap B_{s/5}(x), \ s_y \geq s \right\}.
    \end{equation}
\end{definition}

Note that this definition is in some sense ``solid'', as it is indeed an extension of the previous one; moreover, it has good regularity qualities. 

\begin{theorem}\label{rthm}
    Given the function $r_x$ defined above the following is true:
    \begin{enumerate}
        \item $s_x = r_x$ \quad $\forall x \in C$.
        \item $r_x \geq  d(x,C) \geq d(x,S)$. 
        \item $r_x$ is Lipschitz with $\text{Lip}(r_x) \leq 5$.
    \end{enumerate}
\end{theorem}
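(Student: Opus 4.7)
My plan is to attack each of the three claims directly from the definition
\[
r_x=\sup A_x,\qquad A_x=\cur{s\in(0,2)\,:\,\forall z\in C\cap \B{s/5}{x},\;s_z\geq s},
\]
so that most of the work amounts to bookkeeping with the scale parameter $s$ and the ball radius $s/5$.

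For the identity $r_x=s_x$ when $x\in C$, I would use Vitali disjointness of the balls $\cur{\B{s_z/5}{z}}_{z\in C}$: any $z\in C\setminus\cur{x}$ satisfies $|z-x|\geq(s_z+s_x)/5>s_x/5$, forcing $C\cap \B{s_x/5}{x}=\cur{x}$. Plugging $s=s_x$ into the defining condition then reduces to $s_x\geq s_x$, so $s_x\in A_x$ and $r_x\geq s_x$. Conversely, for any $s>s_x$, $x$ itself sits in $C\cap \B{s/5}{x}$ with $s_x<s$, violating the condition; hence $r_x\leq s_x$.

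For $r_x\geq d(x,C)$, the key observation is that whenever $s<5\,d(x,C)$ the ball $\B{s/5}{x}$ contains no point of $C$, so the condition defining $A_x$ is vacuously satisfied. This already gives $r_x\geq \min(2,5\,d(x,C))\geq d(x,C)$ in the regime $d(x,C)\leq 2$. The second inequality $d(x,C)\geq d(x,S)$ is immediate from $C\subset S$.

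The main step is the Lipschitz estimate, which I plan to prove by comparing $A_x$ and $A_y$ via a shift in $s$. By symmetry it suffices to show $r_x\leq r_y+5|x-y|$. For any $s>r_y+5|x-y|$, I set $s'=s-5|x-y|>r_y$; then $s'\notin A_y$, so there exists $z\in C\cap \B{s'/5}{y}$ with $s_z<s'$. The triangle inequality yields
\[
|z-x|\leq|z-y|+|y-x|<s'/5+|x-y|=s/5,
\]
so $z\in C\cap \B{s/5}{x}$ with $s_z<s'<s$, contradicting $s\in A_x$. Hence no $s>r_y+5|x-y|$ lies in $A_x$, giving the bound. The factor $5$ in the Lipschitz constant is forced by the factor $5$ in the ball radius $s/5$ inside the definition of $A_x$ and is not slack; this shift computation is the only genuinely non-trivial step in the proof, and once the correct quantity $s'=s-5|x-y|$ is identified, the argument executes itself.
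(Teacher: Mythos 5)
Your proof is correct and follows essentially the same approach as the paper's: Vitali disjointness to pin down $C\cap B_{s_x/5}(x)=\{x\}$ for part~(1), vacuous satisfaction of the defining condition when $s<5\,d(x,C)$ for part~(2), and the shift $s\mapsto s-5|x-z|$ comparing the admissible sets at $x$ and $z$ for part~(3). The only cosmetic difference is that in part~(3) you argue by contrapositive (a too-large $s$ cannot lie in the admissible set for $x$) while the paper argues directly (each admissible $s$ for $x$ yields an admissible $s-\gamma$ for $z$); the underlying triangle-inequality computation is identical.
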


\begin{proof}
    The point $(\textit{1})$ can be proven using a property of the Vitali covering: given $x \in C$, it follows $B_{\frac{s}{5}}(x) \cap C = \{x\}$ for every $s \leq s_x$. Then, $s_x - \frac{1}{n}\in \cur{0<s<2 : \forall y \in C\cap B_{s/5}(x), \; s_x\geq s}$, and therefore $r_x \geq \sup_{n} s_x - \frac{1}{n} = s_x$.  By definition, now we have $s_x \geq r_x \geq s$ for every $s \in [0,s_x]$. This proves $s_x = r_x$ if $x\in C$. 
    To prove $(\textit{2})$ it is enough to use the definition: indeed if we consider the ball $B_{s/5}(x)$ with $s < 5d(x,C)$, then $C \cap B_{s/5}(x) = {\emptyset}$ and thus the condition over $s$ is automatically satisfied; as $C\subset S$ we also know that $d(x,C) \geq d(x,S)$, hence the second inequality. 
    
    Consider now $x, z \in B_2$ and assume for simplicity that $r_x \geq r_z$. Set $\gamma = 5|x-z|$ and consider every $s \in (\gamma,2)$ for which it holds that $\forall y \in C \cap B_{\frac{s}{5}}(x)$ it holds $s_y \geq s$. Note that we can always assume there is at least one such value, as if this is not the case then $r_z\leq r_x\leq \gamma$, and there is nothing to prove. Let us now consider the value $s' = s- \gamma$ and notice that $C \cap B_{s'/5}(z) \subset C \cap B_{s/5}(x)$ which leads to $s_y \geq s \geq s'$.
    Since for every feasible value $s$ for $r_x$ $s -\gamma$ belongs to the set of feasible real values for $r_z$ then by taking the supremum, as prescribed in the definition, allows us to conclude that $r_z \geq r_x - \gamma$ and thus $r_x - r_z \leq 5|x-z|$, which is the estimate needed to prove $(\textit{3})$. 
  
\end{proof}

\begin{remark}
    The second point will be useful later to guarantee a non-empty intersection between $B_{r_x}(x)$ and $S$.
\end{remark}

We now proceed to establish a \emph{partition of unity} argument required to build the family of approximating manifolds $S_r$ with good properties.

To this end, we define a modified radius function that incorporates an arbitrary scale parameter \( r > 0 \). Specifically, we set
\begin{gather}
    \tilde{r}_x : B_2(0) \rightarrow \mathbb{R}^+, \quad \tilde{r}_x := \frac{r_x \vee r}{100}\, ,
\end{gather}
where \( r_x \vee r =\max\cur{r_x,r}\) denotes the maximum of \( r_x \) and \( r \).

To justify the construction, we first derive a Lipschitz estimate for \( \tilde{r}_x \), which ensures that its values are comparable at nearby points; that is, for any two sufficiently close points, the corresponding values of \( \tilde{r}_x \) remain within a controlled ratio.

\begin{lemma}\label{mybound}
    Let us consider two points $x_\alpha,x \in B_2$ such that $B_{k \tilde{r}_{x_\alpha}}(x_\alpha) \cap B_{k\tilde{r}_{x}}(x) \neq \emptyset$ for a given real number $k < 20$, then there exists a positive real value $w$ such that:
    \begin{equation}
       \frac{1}{w}\tilde{r}_{x_\alpha} \leq  \tilde{r}_x \leq  w\tilde{r}_{x_\alpha}
    \end{equation}
\end{lemma}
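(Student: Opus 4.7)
The plan is to exploit the fact that $\tilde r_x = R_x/100$ where $R_x := r_x \vee r$, and that $R_x$ inherits the Lipschitz constant from $r_x$, since the max of a $5$-Lipschitz function and a constant is still $5$-Lipschitz. This reduces the statement to a one-line algebraic manipulation of a Lipschitz inequality, provided $k<20$ (which is exactly the condition that will make a certain denominator positive).

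First, I would translate the intersection hypothesis into a distance bound: since $B_{k\tilde r_{x_\alpha}}(x_\alpha)\cap B_{k\tilde r_x}(x)\neq \emptyset$ we get
\begin{equation}
 |x-x_\alpha|\leq k\bigl(\tilde r_{x_\alpha}+\tilde r_x\bigr)=\tfrac{k}{100}(R_{x_\alpha}+R_x).
\end{equation}
Next, I would apply Theorem \ref{rthm}(3) together with the fact that $R_x=\max\{r_x,r\}$ is $5$-Lipschitz (the max of a $5$-Lipschitz function and the constant $r$ is $5$-Lipschitz), to deduce
\begin{equation}
 |R_x-R_{x_\alpha}|\leq 5|x-x_\alpha|\leq \tfrac{k}{20}(R_{x_\alpha}+R_x).
\end{equation}

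Then, assuming without loss of generality that $R_x\leq R_{x_\alpha}$ (the other case is symmetric), the previous inequality becomes
\begin{equation}
 R_{x_\alpha}\bigl(1-\tfrac{k}{20}\bigr)\leq R_x\bigl(1+\tfrac{k}{20}\bigr),
\end{equation}
which, since $k<20$ makes $1-k/20>0$, rearranges to $R_x\geq \tfrac{20-k}{20+k}R_{x_\alpha}$. Dividing by $100$ gives $\tilde r_x \geq \tfrac{20-k}{20+k}\tilde r_{x_\alpha}$, and combined with $\tilde r_x\leq \tilde r_{x_\alpha}$ this yields the claim with $w=\tfrac{20+k}{20-k}$. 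The only subtle point, which is not really an obstacle, is remembering to phrase everything in terms of $R_x$ rather than $r_x$ so that the cases $r_x\leq r$ and $r_x>r$ need not be separated; the Lipschitz property of the maximum handles them uniformly.
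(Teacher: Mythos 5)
Your proof is correct and coincides with the paper's own argument: both rely on the bound $\mathrm{Lip}(\tilde r_x)\leq 1/20$ (equivalently, $\mathrm{Lip}(r_x\vee r)\leq 5$) coming from Theorem~\ref{rthm}(3), the triangle-inequality consequence $|x-x_\alpha|\leq k(\tilde r_x+\tilde r_{x_\alpha})$ of the intersection hypothesis, and the same rearrangement yielding $w=\frac{20+k}{20-k}=\frac{1+k/20}{1-k/20}$. Your reformulation in terms of $R_x=r_x\vee r$ is just a harmless rescaling of the paper's notation.
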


\begin{proof}
    By point \textit{(3)} of Theorem~\ref{rthm}, we immediately obtain the Lipschitz bound
    \begin{gather}
    \text{Lip}(\tilde{r}_x) \leq \frac{1}{20}.
    \end{gather}
    Assume without loss of generality that \( \tilde{r}_x \geq \tilde{r}_{x_\alpha} \). Using this assumption and the fact that the balls \( B_{k \tilde{r}_{x_\alpha}}(x_\alpha) \) and \( B_{k \tilde{r}_x}(x) \) intersect (i.e., \( B_{k \tilde{r}_{x_\alpha}} \cap B_{k \tilde{r}_x} \neq \emptyset \)), we deduce
    \begin{equation}\label{lassu}
        |\tilde{r}_x - \tilde{r}_{x_\alpha}| = \tilde{r}_x - \tilde{r}_{x_\alpha} \leq \frac{1}{20}|x - x_\alpha| \leq \frac{k}{20}(\tilde{r}_x + \tilde{r}_{x_\alpha}).
    \end{equation}
    Rearranging the inequality, we obtain
    \begin{gather}
    \left(1 - \frac{k}{20}\right) \tilde{r}_x \leq \left(1 + \frac{k}{20}\right) \tilde{r}_{x_\alpha},
    \end{gather}
    which implies the existence of a constant \( w = \frac{1 + \frac{k}{20}}{1 - \frac{k}{20}} > 1 \) such that
    \begin{gather}
    \tilde{r}_x \leq w \tilde{r}_{x_\alpha}.
    \end{gather}
    Reversing the roles of \( x \) and \( x_\alpha \) yields the reverse inequality, and thus
    \begin{gather}
    \frac{1}{w} \tilde{r}_{x_\alpha} \leq \tilde{r}_x \leq w \tilde{r}_{x_\alpha},
    \end{gather}
    establishing the desired two-sided comparability.
\end{proof}

We can now prove the existence and some first result for our partition of unity. This is a standard argument, but we report here for the reader's convenience.

\begin{lemma}\label{partition}
    There exists a covering $B_2\subset \bigcup_\alpha B_{r_\alpha}(x_\alpha)$ and $\tilde{r}_\alpha = \frac{r_{x_\alpha} \vee r}{100}$ and smooth nonnegative functions $\phi_\alpha$ such that 
    \begin{enumerate}
      \item $\{B_{\frac{1}{4}\tilde{r}_\alpha}(x_\alpha)\}$ are pairwise disjoint; 
      \item \label{it_2} For each $y\in B_2$ we have $\#\{x_\alpha \vert \;\; y\in B_{4\tilde{r}\alpha}(x_\alpha)\}\leq C(n)$;
      \item $\sum\phi_\alpha =1$ on $B_2$ and $supp(\phi_\alpha) \subset B_{4\tilde{r}_\alpha}$;
      \item \label{it_4} for all $k$: $\tilde{r}_\alpha^{k} \abs{\partial^{(k)}\phi_\alpha} \leq C(n,k)$. 
    \end{enumerate}
  \end{lemma}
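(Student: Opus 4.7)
The plan is to construct the covering by a Vitali selection, bound the multiplicity of the enlarged family using the Lipschitz control on $\tilde r_x$ from Lemma~\ref{mybound}, and then obtain the partition by the standard bump-and-normalize procedure. Concretely, I would apply Vitali's covering lemma to the family $\{B_{\tilde r_x/4}(x)\}_{x\in B_2}$, which has radii bounded below by $r/100$. This yields a (countable) pairwise disjoint subcollection $\{B_{\tilde r_\alpha/4}(x_\alpha)\}$, proving (1), and such that the five-fold enlargements $\{B_{5\tilde r_\alpha/4}(x_\alpha)\}$ already cover $B_2$, which furnishes the covering statement.

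For (2) I would fix $y\in B_2$ and consider the set $A(y)=\{\alpha:y\in B_{4\tilde r_\alpha}(x_\alpha)\}$. Any two such balls meet at $y$, so Lemma~\ref{mybound} applied with $k=4$ forces the radii $\{\tilde r_\alpha\}_{\alpha\in A(y)}$ to be mutually comparable with the dimension-free factor $w=(1+4/20)/(1-4/20)=3/2$. Since the interior balls $B_{\tilde r_\alpha/4}(x_\alpha)$ are disjoint, of comparable radius, and all contained in a ball of radius $\leq 4\tilde r_\alpha+\tilde r_\alpha/4$ around $y$ (with $\tilde r_\alpha$ comparable to any fixed reference), the cardinality $\#A(y)$ is controlled by an $n$-dependent volume packing constant $C(n)$.

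For (3)--(4) I would fix a smooth radial cutoff $\chi:\R\to[0,1]$ with $\chi\equiv 1$ on $[0,5/4]$ and $\chi\equiv 0$ on $[2,\infty)$, and set
\begin{gather}
\psi_\alpha(y)=\chi\!\left(\frac{|y-x_\alpha|}{\tilde r_\alpha}\right),\qquad \phi_\alpha=\frac{\psi_\alpha}{\sum_\beta \psi_\beta}.
\end{gather}
Then $\psi_\alpha\equiv 1$ on $B_{5\tilde r_\alpha/4}(x_\alpha)$ and $\mathrm{supp}(\psi_\alpha)\subset B_{2\tilde r_\alpha}(x_\alpha)\subset B_{4\tilde r_\alpha}(x_\alpha)$, with the scaling estimate $|\partial^{(k)}\psi_\alpha|\leq C(n,k)\tilde r_\alpha^{-k}$. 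Since the $B_{5\tilde r_\alpha/4}(x_\alpha)$ cover $B_2$, we have $\sum_\beta\psi_\beta\geq 1$ on $B_2$, so the $\phi_\alpha$ are smooth, form a partition of unity, and inherit the support property in (3).

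The main work, and the only place that is not entirely mechanical, is the derivative bound (4) on the normalized functions. Here I would argue that for $y\in\mathrm{supp}(\psi_\alpha)$ only indices $\beta\in A(y)$ contribute to the denominator; by Step 2 there are at most $C(n)$ such $\beta$, and by Lemma~\ref{mybound} each corresponding $\tilde r_\beta$ is comparable to $\tilde r_\alpha$. Hence $1\leq \sum_\beta\psi_\beta\leq C(n)$ with $|\partial^{(k)}\sum_\beta\psi_\beta|\leq C(n,k)\tilde r_\alpha^{-k}$, and an inductive application of the Leibniz/quotient rule (absorbing all factors of $\tilde r_\beta^{-1}$ into powers of $\tilde r_\alpha^{-1}$ via comparability) gives $\tilde r_\alpha^{k}|\partial^{(k)}\phi_\alpha|\leq C(n,k)$, which is (4). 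The expected obstacle is purely bookkeeping in this last step: making sure that differentiating a quotient whose denominator is a sum of functions at slightly different scales still produces estimates at the single scale $\tilde r_\alpha$, which is precisely where the uniform comparability from Lemma~\ref{mybound} and the finite overlap from (2) are both essential.
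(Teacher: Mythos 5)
Your proposal is correct and follows essentially the same route as the paper: a Vitali-type disjoint selection of the family $\cur{B_{\tilde r_x/4}(x)}$, radius comparability from Lemma~\ref{mybound} to get both the covering and the finite-overlap bound via a volume-packing argument, and the standard bump-and-normalize construction for the $\phi_\alpha$ with the quotient-rule bookkeeping controlled by comparability of nearby $\tilde r$. The only cosmetic difference is that the paper works directly from a maximal disjoint subfamily and uses Lemma~\ref{mybound} (with $k=1/4$) to conclude that the balls $B_{\tilde r_\alpha}(x_\alpha)$ already cover, instead of invoking the general $5r$-covering lemma, but the key input — Lipschitz control on $\tilde r$ — is the same in both.
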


\begin{proof}
    \par{
      We take $\cur{x_\alpha} \subset B_2$ to be a maximal subset so that $\{B_{\frac{\tilde{r}_{x_\alpha}}{4}}(x_\alpha)\}$ are disjoint. We show that $B_2\subset \bigcup_\alpha B_{\tilde{r}_{x_\alpha}}(x_\alpha)$. Fix $y\in B_2$: by maximality, we can find $\alpha$ such that $B_{\frac{\tilde{r}_y}{4}}(y)\cap B_{\frac{\tilde{r_\alpha}}{4}}(x_\alpha) \neq \emptyset$. Using the third point of Lemma \ref{mybound} with $k = \frac{1}{4}$ we can conclude that $\tilde{r}_y \leq 2 \tilde{r}_{x_\alpha}$ and since $|x_\alpha-y| \leq \frac{\tilde{r}_y + \tilde{r}_{x_\alpha}}{4}$ it follows $|x_\alpha-y| \leq \frac{3\tilde{r}_{x_\alpha}}{4}$, proving that $\forall y \in B_2$ there exists ${x_\alpha}$ such that $y \in B_{\tilde{r}_{x_\alpha}}(x_\alpha)$.
    }

\par{
  Let $y\in B_2$ and let $\{x_\beta\}_{1}^N$ the set of centers such that $y\in B_{\tilde{r}_{x_\beta}}(x_\beta)$. This implies that $\frac{1}{2}r_y\leq r_\beta\leq 2r_y$. In particular, the set $\{B_{\tilde{r_\beta}/10}(x_\beta)\}$ is a set of subsets of $B_{8\tilde{r}_y}$. Moreover, these are all disjoint. Thus, by computing the volumes, we obtain $N\leq C(n)$, as claimed. 
}
\par{
  We now build the family of functions $\phi_\alpha$ using a standard partition of unity construction. Let $\psi:B_4\to \R$ a fixed smooth, compactly supported and nonnegative function with $\psi\equiv 1$ in $B_1$. We then define
  \begin{equation}
    \psi_\alpha(x) = \psi\left(\tilde{r}_{x_\alpha}^{-1}(x-x_\alpha)\right)
  \end{equation}
  and
  \begin{equation}
    \phi_\alpha(x)= \frac{\psi_\alpha(x)}{\sum_{\beta}\phi_\beta(x)}. 
  \end{equation}
  This is well defined since $\sum_\beta \phi_\beta(x)\geq 1$ for all $x\in B_2$. Then, automatically we get $\sum_\alpha\phi_\alpha\equiv 1$. Moreover, since $supp(\psi)\subset B_1$, we have $\text{supp}(\psi_\alpha)\subset B_{4\tilde{r}_\alpha}(x_\alpha)$. Lastly, 
  \begin{equation}
    \partial_{y^j}\phi_\alpha = \frac{\partial_{y^j}\psi_\alpha\ton{\sum_\beta \psi_\beta} -\ton{\sum_\beta\partial_{y^j}\psi_\beta}\psi_\alpha}{\ton{\sum_\beta \phi'_\beta(x)}^2}.
  \end{equation}
  Now, we have
  \begin{equation}
    \partial_{y^j}\psi_\alpha = \tilde{r}_{x_\alpha}^{-1}(\partial_{y^j}\psi)(\tilde{r}_{x_\alpha}^{-1}(x-x_\alpha)). 
  \end{equation}
  Thus, 
  \begin{equation}
    \abs{\partial_{y^j}\psi_{\alpha}}\leq \tilde{r}_{x_\alpha}^{-1}C(n).
  \end{equation}
  Moreover, take $x\in B_2$: thus, there is $x_\alpha$ such that $x\in B_{\tilde{r}_\alpha}(x_\alpha)$, and by $\psi\equiv 1$ on $B_1$ we have that $\psi_\alpha(x) = 1$. Moreover, for all $x\in B_2$: 
  \begin{equation}
    \sum_{\beta}\psi_\beta(x) = \sum_{\beta\in I_x}\psi_\beta(x)
  \end{equation}
  with $I_x=\cur{\left. \beta\right\rvert\;\; x\in B_{\tilde{r}_\beta}(x_\beta)}$. Then, 
  \begin{equation}
    \sum_{\beta\in I_x}\psi_\beta(x)\leq C(n)\#\{I_x\}\leq C(n)
  \end{equation}
  by $(\ref{it_2})$. Thus, putting together the two previous estimates we get
  \begin{equation}
    \abs{\partial_{y^j}\phi_\alpha}\leq \tilde{r}_{x_\alpha}^{-1}C(n). 
  \end{equation}
}
\par{
  which proves (\ref{it_4}) for $k=1$. A similar argument can be used to show the estimate for generic $k$. 
}
  \end{proof}

\subsection{Subspace selection lemma}
We can move to the next point where we prove and define the map associating an approximating plane of $S$ to every point of $B_2(0)$. The idea is to use the partition of unity to smoothly "average" the approximating subspaces, thereby assigning to each point a plane that varies smoothly with the point. Morally, one might attempt to define
\begin{gather}
    L_y = \sum_\alpha \phi_\alpha(y) L_\alpha,
\end{gather}
where \( \{\phi_\alpha\} \) is a partition of unity. However, since the Grassmannian manifold of subspaces is not linear, this expression is not generally well-defined. A common workaround, employed for instance in \cite{simon_squash}, is to average the orthogonal projections onto the subspaces instead of the subspaces themselves.

In this work, however, we follow a different approach, inspired by \cite{Naber_Park}. We present here the main technical lemma, which, roughly speaking, tells us that we can assign a $k-$dimensional subspace $L_y$ to each point $y\in B_2$, with quantitative control on its variation wrt $y$..

\begin{lemma}[{Subspace Selection Lemma}, {\cite[Theorem 4.1]{Naber_Park}}]\label{lemma_subsel}
Let \( S \subseteq B_2 \subseteq \mathbb{R}^n \) satisfy condition \eqref{semireif} for $\delta > 0$ with \( 0 < r < 1 \) fixed, and let \(\overline{r_y} = 10^4 \tilde{r}_y = 10^2 r_y \vee r\). Then for each \( y \in B_2 \), there exists a \( k \)-dimensional affine subspace \( L_y \), satisfying 
\begin{equation}
    \overline{r_y} |\nabla \hat{\pi}_y| + \overline{r_y}^2 |\nabla^2 \hat{\pi}_y| \leq C(n)\delta. \quad \forall y \in B_2\, ,
\end{equation}
where $\hat{\pi}_y = \hat{\pi}_{L_y}$ is the projection onto $L_y$.
\end{lemma}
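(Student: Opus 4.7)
The plan is to build $\hat{\pi}_y$ by smoothly interpolating, via the partition of unity of Lemma \ref{partition}, a collection of honest affine projections $\pi_\alpha$ associated to the centers $x_\alpha$, and then retracting the result onto the smooth submanifold of orthogonal projections onto $k$-dimensional affine subspaces.

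First, for each $x_\alpha$ I pick a reference point $z_\alpha \in S$ with $|z_\alpha - x_\alpha| \leq r_{x_\alpha}$, which exists by Theorem \ref{rthm}(\textit{2}). The definition of $r_x$ (Definition \ref{def2}) guarantees that every $w \in C$ lying close to $x_\alpha$ satisfies $s_w \leq r_{x_\alpha} \vee r$, so the ball $B_{\overline{r_{x_\alpha}}}(z_\alpha)$ is a good ball in the sense of Definition \ref{good_B}. Applying the Reifenberg hypothesis of Theorem \ref{t:main} at this ball supplies a $k$-plane $L_\alpha$ that is quantitatively well-determined. Letting $\pi_\alpha$ denote the orthogonal (affine) projection onto $L_\alpha$, I define
\[
\tilde{\pi}_y := \sum_\alpha \phi_\alpha(y)\,\pi_\alpha.
\]

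The key tilting ingredient is that whenever two indices $\alpha, \beta$ satisfy $B_{4\tilde{r}_\alpha}(x_\alpha) \cap B_{4\tilde{r}_\beta}(x_\beta) \neq \emptyset$, Lemma \ref{mybound} gives $\tilde{r}_{x_\alpha} \sim \tilde{r}_{x_\beta}$, and the two relevant good balls nest into a common enlarged good ball, so Lemma \ref{lemma_planes} yields $|\pi_\alpha - \pi_\beta|_{\mathrm{op}} \leq C(n,\varepsilon)\delta$ on $B_{\overline{r_y}}(y)$. Differentiating and exploiting $\sum_\alpha \phi_\alpha \equiv 1$ to subtract a fixed base index $\alpha_0(y)$ with $y \in \mathrm{supp}(\phi_{\alpha_0})$,
\[
\nabla_y \tilde{\pi}_y \;=\; \sum_\alpha \nabla\phi_\alpha(y)\,(\pi_\alpha - \pi_{\alpha_0(y)}).
\]
Combining Lemma \ref{partition}(\ref{it_4}) ($|\nabla\phi_\alpha| \leq C(n)/\tilde{r}_{x_\alpha}$), the bounded overlap from Lemma \ref{partition}(\ref{it_2}), the comparability $\tilde{r}_{x_\alpha} \sim \overline{r_y}$, and the tilting estimate produces $|\nabla_y \tilde{\pi}_y| \leq C(n,\varepsilon)\delta/\overline{r_y}$. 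The analogous bound for $\nabla^2 \tilde{\pi}_y$ follows from the $k=2$ case of Lemma \ref{partition}(\ref{it_4}), with the extra factor of $\overline{r_y}^{-1}$ absorbed.

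Finally, $\tilde{\pi}_y$ is within $C\delta$ of the genuine projection $\pi_{\alpha_0(y)}$ and therefore, provided $\delta$ is small, lies in a uniform tubular neighborhood of the smooth submanifold $\mathcal{P}_k$ of orthogonal projections onto $k$-dimensional affine planes in $\mathbb{R}^n$. I set $\hat{\pi}_y$ to be the nearest-point retraction of $\tilde{\pi}_y$ onto $\mathcal{P}_k$ and let $L_y := \mathrm{Im}(\hat{\pi}_y)$; smoothness of the retraction together with the chain rule transfers the first- and second-derivative bounds verbatim, up to a dimensional constant. The main obstacle is the first step: ensuring that every $x_\alpha$ admits a well-defined, well-controlled plane and that Lemma \ref{lemma_planes} is genuinely applicable to every overlapping pair. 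This requires careful bookkeeping with Definitions \ref{def1}--\ref{def2} to verify that the working scale $\overline{r_{x_\alpha}}$ sits uniformly above the bad-ball scale at $x_\alpha$ and at every neighboring center simultaneously—precisely the role played by the supremum over $C$-neighbors in the definition of $r_x$. The retraction step is essentially routine modulo making $\delta$ small enough that $\tilde{\pi}_y$ never leaves the neighborhood where the nearest-point projection onto $\mathcal{P}_k$ is smoothly controlled.
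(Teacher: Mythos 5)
Your construction of the interpolated operator is the same as the paper's: both form the partition-of-unity average $M_y=\sum_\alpha \phi_\alpha(y)\hat\pi_\alpha$ (your $\tilde\pi_y$ has this as its linear part), both use the tilting control of Lemma \ref{lemma_planes} together with Lemma \ref{partition}(\ref{it_4}) and $\sum_\alpha\nabla\phi_\alpha\equiv 0$ to obtain $|\nabla^{(j)} M_y|\lesssim\delta\,\tilde r_y^{-j}$, and both extract $\hat L_y$ from the spectral gap of $M_y$. Where you genuinely diverge is in how the derivative estimates for $\hat\pi_y$ are transferred from $M_y$: the paper encodes $\hat\pi_y$ as the zero of the function $F(y,v)=\nabla_v\operatorname{tr}_{\hat L_v}(M_y)$, checks the nondegeneracy estimate $|\langle\partial_v F,w\rangle+\langle v,w\rangle|\leq C\delta$ on the Hessian of the trace functional, and invokes the implicit function theorem; you instead observe that the top-$k$ spectral truncation is exactly the nearest-point retraction $R$ onto the compact smooth submanifold $\operatorname{Gr}(k,n)\subset\operatorname{Sym}(n)$, so $\hat\pi_y=R(M_y)$ inherits $C^2$ bounds by the chain rule (the cross term $D^2R[\nabla M_y,\nabla M_y]$ contributes $\delta^2\tilde r_y^{-2}$ which is absorbed). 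Both routes are valid here; the paper's IFT formulation is the Naber--Park packaging and is reused verbatim to derive the estimates in Lemma \ref{lemma:phi}, whereas the retraction argument is closer to the classical Simon/David--Toro averaging-of-projections device (the paper explicitly flags that it departs from it). Your argument is shorter and buys an essentially free proof of the second-derivative bound from compactness of the Grassmannian, while the IFT argument makes the eigenvalue-gap mechanism explicit and is more robust if the ambient target were not a compact submanifold.

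Two small imprecisions you should fix. First, you retract the full affine map $\tilde\pi_y$ onto the ``submanifold of orthogonal projections onto $k$-dimensional affine planes,'' but that set is not compact (base points range over all of $\R^n$), so the tubular-neighborhood retraction argument does not apply directly; you should retract only the linear part $M_y$ onto the compact Grassmannian and handle the base point $\ell_y=\sum_\alpha\phi_\alpha(y)\pi_\alpha[y]$ separately, exactly as the paper does. Second, the detour through a point $z_\alpha\in S$ near $x_\alpha$ is unnecessary: condition \eqref{semireif} is assumed for every ball $B_r(x)\subset B_2(0)$, not only balls centered on $S$, so $L_\alpha=L_{x_\alpha,10^4\tilde r_\alpha}$ is already supplied by the hypothesis without any recentering.
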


\begin{proof}      
We start by considering a collection of points $\cur{x_\alpha}\subset B_2$ as in Lemma \ref{partition}, and we define
\begin{equation}
    L_{\alpha} \equiv L_{x_{\alpha}, 10^4 \tilde{r}_{\alpha}}\, .
\end{equation}

Notice that by point \textit{(2)} of Theorem \ref{rthm}, $\B {r_\alpha}{x_\alpha}$ has non-empty intersection with $S$.
In order to define $L_y$, we will define the linear subspace $\hat L_y$ relative to the projection map $\hat{\pi}_y$, and a point $\ell_y \in L_y$. Specifically we set:
\begin{equation}\label{def_elly}
    \ell_y \equiv \sum_{\alpha} \phi_{\alpha}(y) \pi_{\alpha}[y],
\end{equation}
where $\pi_{\alpha} =\pi_{x_\alpha, 10^4 \tilde{r}_{\alpha}}$. 

The definition of $\hat{\pi}_y$ is more involved. The rough idea is the following: consider
\begin{equation}
    M_y \equiv \sum_{\alpha} \phi_{\alpha}(y) \hat{\pi}_{\alpha}\, ,
\end{equation}
where here we define $\hat{\pi}_{\alpha}$ as the projection on the linear subspace relative to $L_\alpha$ denoted by $\hat{L}_\alpha$. Even though $M_y$ is not a projection map, it is a symmetric operator in $\R^n$. For a fixed $y$, all the $\hat{\pi}_{\alpha}$ in the definition of $M_y$ are close to each other, and this implies that $M_y$ is close to a projection on the ``average of $L_\alpha$''. The more accurate way to phrase this argument is in terms of an eigenvalue gap for $M_y$, which will allow us to associate to $M_y$ a linear subspace $\hat L_y$ defined in terms of this eigenvalue gap.

To be more precise, we now prove some useful properties that will help us transitioning to the final projection matrix $\hat{\pi}_y$. Let us consider the set $\{x_{\beta}\}$ of points satisfying $y \in B_{8 \tilde{r}_\beta}(x_{\beta})$, and suppose that this set contains more than a single point in order to avoid trivial cases. Since clearly $\cur{y}\subset B_{8 \tilde{r}_{\beta_1}}(x_{\beta_1}) \cap B_{8 \tilde{r}_{\beta_2}}(x_{\beta_2}) \neq \emptyset$, we can apply Lemma \ref{mybound} with $k = 8$ to prove that the radia are comparable, in particular $\frac{1}{10}\tilde{r}_{\beta_1} \leq \tilde{r}_{\beta_2} \leq 100 \tilde{r}_{\beta_1}$.
Now we use Lemma \ref{lemma_planes} with $s_1 = 10^4 \tilde{r}_{\beta_1} = 100 r \vee r_{x_{\beta_1}} \geq 10 r \vee r_{x_{\beta_1}}$ and $ s_2 = 10^4 \tilde{r}_{\beta_2} = 100 r \vee r_{x_{\beta_2}} \geq 10 r \vee r_{x_{\beta_2}}$ to obtain
\begin{gather}
    \|\hat{\pi}_{\beta_1} - \hat{\pi}_{\beta_2}\|\equiv \|\hat{\pi}_{x_{\beta_1}, s_1} - \hat{\pi}_{x_{\beta_2}, s_2}\| < C(n)\delta\, ,
\end{gather}
where we have set for convenience of notation $\hat{\pi}_{\beta_i} \equiv \hat{\pi}_{x_{\beta_i}, s_i}$.

Given $\beta_i$ such that $y \in B_{8 \tilde{r}_{\beta_i}}(x_{\beta_i})$:
\begin{equation}\label{eq_My-pi}
    \left\| M_y - \hat{\pi}_{\beta_i}\right\| = \left\|  \sum_{\alpha} \phi_{\alpha}(y) \hat{\pi}_{\alpha} - \hat{\pi}_{\beta_i}\right\| = \left\|  \sum_{j} \phi_{\beta_j}(y) \hat{\pi}_{\beta_j} - \hat{\pi}_{\beta_i}\right\| 
\end{equation}
and since $\sum_{j} \phi_{\beta_j}(y) = 1$
\begin{equation}
    \left\|  \sum_{j} \phi_{\beta_j}(y) \hat{\pi}_{\beta_j} - \hat{\pi}_{\beta_i}\right\|  = \left\|  \sum_{j} \phi_{\beta_j}(y) \hat{\pi}_{\beta_j} - \left(\sum_{j} \phi_{\beta_j}(y)\right)\hat{\pi}_{\beta_i}\right\| = \sum_{j \neq i} \phi_{\beta_j}(y) \|\hat{\pi}_{\beta_j}- \hat{\pi}_{\beta_i}\| 
\end{equation}
leading to inequality 
\begin{equation}\label{stimaM_y}
    \left\| M_y - \hat{\pi}_{\beta_i}\right\| = \sum_{j \neq i} \phi_{\beta_j}(y) \|\hat{\pi}_{\beta_j}- \hat{\pi}_{\beta_i}\| \leq \sum_{j \neq i} \phi_{\beta_j}(y) C(n) \delta  \leq C(n) \delta\, .
\end{equation}

Moreover, an important feature of $M_y$ is that it presents an eigenvalue gap, where the first $k$ eigenvalues $\{\lambda_i\}_{i\leq k} \in [1-C(n)\delta, 1]$ while the last $n-k$ eigenvalues $\{|\lambda_i|\}_{k < i\leq n} \in [0,C(n)\delta]$, this follows easily from \eqref{eq_My-pi}.

Since $M_y$ is symmetric, we can diagonalize it and consider its eigenvectors $\{v^1(y), v^2(y), ...\}$, ordered in such a way that the corresponding eigenvalue $\{\lambda_1(y), \lambda_2(y), ...\}$ are non-increasing. To address the fact that $M_y$ is not an orthogonal projection matrix we consider the actual projection on the following subspace:
\begin{equation}\label{hatpi}
    \hat{L}_y \equiv \text{span}\{v^1(y), v^2(y), ..., v^k(y)\}.
\end{equation}
called $\hat{\pi}_y$.

We can now leverage property \eqref{stimaM_y} to get some bounds on the norm of new projection matrix using $M_y$, in fact:
\begin{equation}
    \|M_y - \hat{\pi}_y\|
=
\left\| \sum_{j=1}^k (1-\lambda_j) \, v_j \, v_j^T + \sum_{j=k+1}^n \lambda_j \, v_j \, v_j^T\right\| \leq \sum_{j=1}^k  |1-\lambda_j| + \sum_{j=k+1}^n  |\lambda_j|\leq C(n)\delta
\end{equation}
and thus, applying triangle inequality along with \eqref{stimaM_y} one can also conclude that
\begin{equation}
    \|\hat{\pi}_y - \hat{\pi}_{\beta_i}\| \leq C(n) \delta
\end{equation}
for every $y\in B_{8\tilde{r}_{\beta_i}}(x_{\beta_i})$.
Consequently if we consider $y\in B_{8\tilde{r}_{\beta}}(x_{\beta}) \cap  B_3$ it also holds
\begin{equation}
    \left|M_y[y]-\hat{\pi}_{\beta}[y]\right|,\left|\hat{\pi}_y[y]-\hat{\pi}_{\beta}[y]\right| \leq C(n)\delta \tilde{r}_y.
\end{equation}

\textbf{IFT: Implicit Function Theorem.}  
We now show that the map $\hat \pi_y$ arises as the zero of a suitably defined function, and we apply the implicit function theorem to derive the desired properties.

Let $V$ denote the space of linear maps $v : L_\beta \to L_\beta^\perp$, and define the subset
\begin{gather}\label{condv}
V_s := \{ v \in V : \|v\| \leq 0.1 \},
\end{gather}
where the norm bound ensures proximity to the correct stationary point.

We define a smooth function
\begin{gather}
F : B_{4\tilde{r}_\beta}(x_\beta) \times V_s \to V
\end{gather}
by the condition that for all $w\in L_\beta$:
\begin{gather}\label{F}
\langle F(y, v), w \rangle := \partial_w \operatorname{tr}_{\hat{L}_v}(M_y) = \partial_w \sum_i \langle e_i, M_y[e_i] \rangle,
\end{gather}
here, $\hat{L}_v$ denotes the affine subspace determined by the graph of $v$, $\partial_w$ denotes the directional derivative in the direction $w \in L_\beta$, and $\{e_i\}_{i=1}^k$ is an orthonormal basis of the graph $L_v$.

In other words, the function $F$ is the gradient of $\operatorname{tr}_{\hat{L}_v}(M_y)$ with respect to variations of the plane $\hat{L}_v$.

We will establish the following estimates:
\begin{gather}\label{eq_claim_F}
\tilde r_y \| \partial_{y^i} F(y,v) \| + \tilde r_y^2 \| \partial_{y^i} \partial_{y^j} F(y,v) \| \leq C(n)\delta, \quad
| \langle \partial_v F(y, \hat\pi_y), w \rangle + \langle v, w \rangle | \leq C(n)\delta
\end{gather}
where $\partial_v$ denotes the Jacobian of the function $F$ with respect to the linear application $v$, while $\partial_{y^i}$ and $\partial_{y^j}$ are the partial derivatives relative to the components of the vector $y$.
These bounds allow us to apply the IFT, yielding the desired control on $\hat\pi_y$.

In particular, given any $w\in L_\beta$, $w\neq 0$, we define for convenience
\begin{gather}
    e_w= \frac{(w, v(w))}{|(w, v(w))|}
\end{gather}
Some easy calculations, which for convenience are postponed to section \ref{sec_IFT} show that
\begin{equation}
 \partial_w tr_{\hat{L}_v}(M_y) =  2 \langle e_w, M_y[u]\rangle = 0,
\end{equation}
showing that the subspace associated to $\hat\pi_y$ is a stationary point. Using the same estimate, we obtain
\begin{equation}
    \partial_{y^i} \partial_w \operatorname{tr}_{L_v}(M_y) = 2 \langle e_w, \partial_{y^i} M_y[u] \rangle\, .
\end{equation}
Since this holds for every fixed vector \( w \in L_\beta \), it follows that
\begin{equation}
    \| \partial_{y^i} F(y,v) \| \leq C(n) \| \partial_{y^i} M_y \|\, .
\end{equation}
Observing that
\begin{equation}
    \| \partial_{y^i} M_y \| = \left\| \sum_\alpha \partial_{y^i} \phi_\alpha(y) \hat{\pi}_\alpha \right\| = \left\| \sum_\alpha \partial_{y^i} \phi_\alpha(y)(\hat{\pi}_\alpha - \hat{\pi}_\beta) \right\| \leq C(n) \delta \tilde{r}_y^{-1}\, ,
\end{equation}
we obtain bound
\begin{equation}
    \| \partial_{y^i} F(y,v) \| \leq C(n) \delta \tilde{r}_y^{-1}\,
\end{equation}
on the first derivative of \( F \). Similarly, one can derive the second derivative bounds:
\begin{gather}
    \| \partial_{y^i} \partial_{y^j} F(y,v) \| \leq C(n) \delta \tilde{r}_y^{-2}.
\end{gather}

Now we move to the last estimate in \eqref{eq_claim_F}. By direct computation, we find
\begin{equation}\label{he}
    \frac{1}{2} \partial_w^2 \operatorname{tr}_{\hat{L}_v}(M_y) = -\langle e_w, M_y[e_w] \rangle + \langle u, M_y[u] \rangle.
\end{equation}
Using the spectral gap of \( M_y \), we note that if \( e_1 \in \operatorname{span}\{v_j\}_{j=1}^k \), then \( \langle e_1, M_y[e_1] \rangle \geq \lambda_k \), while for \( u \in \operatorname{span}\{v_j\}_{j=k+1}^n \), we have \( \langle u, M_y[u] \rangle \leq \lambda_{k+1} \). This yields the estimate
\begin{gather}
    \frac{1}{2} \partial_w^2 \operatorname{tr}_{L_y}(M_y) \leq \lambda_{k+1} - \lambda_k \leq - (1 - C(n) \delta)\, .
\end{gather}

Since equation~\eqref{he} holds for every \( w \in L_\beta \) and \( u \in \{u_j\}_{j=k+1}^n \), and noting that \( \pi_y(w) = e_1 \in \operatorname{span}\{v_j\}_{j=1}^k \), we conclude that the Jacobian of \( F \) is close to the opposite of the identity in the sense that
\begin{equation}
    |\langle \partial_v F(y, \hat\pi_y), w \rangle + \langle v, w \rangle | \leq C(n)\delta.
\end{equation}

With these inequalities we can apply implicit function theorem showing that the function $\hat{\pi}_y$ satisfies
\begin{equation}
      \tilde{r}_y\|\nabla \hat{\pi}_y\|, \tilde{r}_y^{2}\|\nabla^2 \hat{\pi}_y\| \leq C(n)\delta.
\end{equation}
Given the definition of the radius $\overline{r}_y$ the thesis follows with different constants as there is a constant fixed factor for the ratio between $\tilde{r}_y$ and $\overline{r}_y$.
\end{proof}

This concludes the main result of the Subspace Selection Lemma. We now define an affine map \(\pi_y\) centered at the point \(\ell_y\), using the projection \(\hat{\pi}_y\) constructed in Theorem~\ref{lemma_subsel}. Specifically, we set $\forall z$:
\begin{gather}\label{eq_deph_my}
\pi_y[z] := \hat{\pi}_y[z - \ell_y] + \ell_y, \quad \text{and define} \quad m_y := \pi_y[y].
\end{gather}
This map projects the point \(y\) onto the approximate tangent plane \(L_y\) centered at \(\ell_y\), and we use it to describe the local geometric behavior of the set. The following proposition provides bounds on the first and second derivatives of \(m_y\) in terms of \(\hat{\pi}_y\):

\begin{proposition}\label{lemma_subsel_m}
Given the above defined quantities $\pi_y$,$m_y$,$\delta$ and $\overline{r}_y$, the following holds:
\begin{equation}
     \|\nabla m_y - \hat{\pi}_y\| +\overline{r_y} \|\nabla^2 m_y\| \leq C(n)\delta. \quad \forall y \in B_2
\end{equation}
\end{proposition}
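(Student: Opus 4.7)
The plan is to differentiate the defining formula
$m_y = \hat{\pi}_y[y - \ell_y] + \ell_y = \hat{\pi}_y[y] + (I - \hat{\pi}_y)[\ell_y]$
directly; the product rule then gives
\begin{equation*}
\nabla m_y - \hat{\pi}_y = (\nabla \hat{\pi}_y)(y - \ell_y) + (I - \hat{\pi}_y)\nabla \ell_y,
\end{equation*}
and each piece can be controlled by tools already developed: the derivative bounds on $\hat{\pi}_y$ from Lemma~\ref{lemma_subsel}, the partition-of-unity estimates in Lemma~\ref{partition}, the tilting estimate of Lemma~\ref{lemma_planes}, and the estimate $\|M_y - \hat{\pi}_y\| \leq C(n)\delta$ established inside the proof of Lemma~\ref{lemma_subsel}.

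The crucial auxiliary bound is $|y - \ell_y| \leq C(n)\overline{r}_y$. To prove it, I would fix $\alpha$ with $\phi_\alpha(y)\neq 0$ and use Theorem~\ref{rthm}(2) to produce a point $p \in S \cap B_{r_{x_\alpha}}(x_\alpha) \subset S \cap B_{10^4 \tilde{r}_\alpha}(x_\alpha)$; since $L_\alpha = L_{x_\alpha, 10^4\tilde{r}_\alpha}$, condition~\eqref{semireif} gives $d(p, L_\alpha) \leq 10^4\delta\tilde{r}_\alpha$ and hence $d(x_\alpha, L_\alpha) \leq C\tilde{r}_\alpha$, so for $y$ in the support of $\phi_\alpha$ the triangle inequality with Lemma~\ref{mybound} yields $|y - \pi_\alpha[y]| = d(y, L_\alpha) \leq C\tilde{r}_y \leq C\overline{r}_y$; summing with $\sum \phi_\alpha = 1$ concludes. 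Combined with $\|\nabla \hat{\pi}_y\| \leq C(n)\delta/\overline{r}_y$ from Lemma~\ref{lemma_subsel}, this forces the first summand to be $\leq C(n)\delta$. For the second summand I would expand $\nabla \ell_y = M_y + \sum_\alpha \nabla\phi_\alpha(y)\,\pi_\alpha[y]$ and observe that $(I - \hat{\pi}_y)M_y = (I - \hat{\pi}_y)(M_y - \hat{\pi}_y)$ contributes at most $C(n)\delta$, while $\sum \nabla\phi_\alpha = 0$ lets me rewrite the remaining sum as $\sum \nabla\phi_\alpha(y)(\pi_\alpha[y] - \pi_\beta[y])$ for a fixed reference index $\beta$, which Lemmas~\ref{lemma_planes} and~\ref{partition}(\ref{it_4}) jointly bound by $C(n)\delta$.

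For the second-derivative estimate $\overline{r}_y\,\|\nabla^2 m_y\| \leq C(n)\delta$, I would differentiate once more and collect the pieces $\nabla\hat{\pi}_y$, $(\nabla^2\hat{\pi}_y)(y-\ell_y)$, cross products of $\nabla\hat{\pi}_y$ with $\nabla\ell_y$, and $(I - \hat{\pi}_y)\nabla^2\ell_y$. The first three are bounded by $C(n)\delta/\overline{r}_y$ using Lemma~\ref{lemma_subsel}, the auxiliary $|y-\ell_y|\leq C\overline{r}_y$, and the intermediate bound $\|\nabla\ell_y\|\leq C(n)$ which follows from $\|M_y\|\leq 1 + C\delta$ and the same telescoping argument used above. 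For $\nabla^2\ell_y$ the affinity of each $\pi_\alpha$ gives
\begin{equation*}
\nabla^2\ell_y = \sum_\alpha \nabla^2\phi_\alpha(y)\,\pi_\alpha[y] + 2\sum_\alpha \nabla\phi_\alpha(y)\,\hat{\pi}_\alpha,
\end{equation*}
and the identities $\sum\nabla^2\phi_\alpha = 0$ and $\sum\nabla\phi_\alpha = 0$ let me rewrite both sums as differences $\pi_\alpha[y]-\pi_\beta[y]$ and $\hat{\pi}_\alpha - \hat{\pi}_\beta$, which Lemma~\ref{lemma_planes} controls by $C\delta\tilde{r}_y$ and $C\delta$; combined with the higher-order bounds of Lemma~\ref{partition}(\ref{it_4}) this yields $\|\nabla^2\ell_y\| \leq C(n)\delta/\overline{r}_y$. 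The most delicate point throughout is precisely the pointwise bound $|y - \ell_y| \leq C\overline{r}_y$: without it there is no way to absorb $\nabla\hat{\pi}_y$ against $(y - \ell_y)$, so recognizing that Theorem~\ref{rthm}(2) ensures a nearby point of $S$ in each relevant ball (and thus forces every local plane $L_\alpha$ to pass close to $y$) is the pivotal observation on which the entire argument rests.
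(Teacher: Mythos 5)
Your proposal is correct and follows essentially the same route as the paper, with a slightly cleaner algebraic organization. The key decomposition you write down,
\begin{equation*}
\nabla m_y - \hat{\pi}_y = (\nabla \hat{\pi}_y)(y - \ell_y) + (I - \hat{\pi}_y)\nabla \ell_y,
\end{equation*}
is exactly what the chain rule gives (note $(I-\hat{\pi}_y)\hat{\pi}_y = 0$), and it isolates cleanly the two effects — the tilt of the plane and the motion of the base point — that the paper instead tracks through the intermediate quantity $\partial_{y^i}(m_y - \hat{\pi}_y[y])$ together with its Claims~1 and~2. Two small stylistic differences: you use the projector identity $(I - \hat{\pi}_y)M_y = (I - \hat{\pi}_y)(M_y - \hat{\pi}_y)$ to immediately absorb the $C(n)\delta$ gap between $M_y$ and $\hat{\pi}_y$, whereas the paper reaches the analogous bound on $\hat{\pi}_y[e_i - M_y[e_i]]$ by decomposing $e_i$ in the eigenbasis of $M_y$; and you spell out explicitly via Theorem~\ref{rthm}(2) and condition~\eqref{semireif} why $|y - \pi_\alpha[y]| \leq C\tilde{r}_y$ (hence $|y - \ell_y| \leq C\overline{r}_y$), which the paper states without detail. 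The remaining ingredients — the derivative bounds on $\hat{\pi}_y$ from Lemma~\ref{lemma_subsel}, the telescoping $\sum_\alpha \nabla\phi_\alpha = 0$ combined with Lemmas~\ref{lemma_planes} and~\ref{partition}(\ref{it_4}), and the resulting bounds $\|\nabla \ell_y\| \leq C(n)$, $\|\nabla^2 \ell_y\| \leq C(n)\delta\,\overline{r}_y^{-1}$ — coincide with the paper's. Your version is arguably more transparent because it never needs to compare $\partial_{y^i}(\hat{\pi}_y[y])$ with $\hat{\pi}_y[e_i]$, a step the paper handles somewhat tersely.
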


\begin{proof}
We will assume the following technical estimates (Claims 1 and 2) and defer their proofs to Appendix~\ref{Claims}, as they follow from direct computations using the constructions in Theorem~\ref{lemma_subsel}:

\begin{itemize}
    \item \textbf{Claim 1}: 
    \begin{gather}\label{eq_claim1}
    \left| \partial_{y^i}(\ell_y - \hat{\pi}_y[y]) \right| \leq C(n)\delta, \quad 
    \left| \partial_{y^i} \partial_{y^j} \ell_y \right| \leq C(n)\delta \tilde{r}_y^{-1}
    \end{gather}
    
    \item \textbf{Claim 2}: 
    \begin{gather}\label{eq_claim2}
    \left| \partial_{y^i}(M_y[y] - \hat{\pi}_\beta[y]) \right| \leq C(n)\delta, \quad 
    \left| \partial_{y^i} \partial_{y^j} M_y[y] \right| \leq C(n)\delta \tilde{r}_y^{-1}
    \end{gather}
\end{itemize}

Here, the maps \( \ell_y, M_y, \hat{\pi}_\beta, \hat{\pi}_y \) are defined in Theorem~\ref{lemma_subsel}. The estimates follow from straightforward but technical computations. 

\medskip

\textbf{Proof of the first inequality: } here we prove $\|\nabla m_y - \hat{\pi}_y\|\leq C(n)\delta$. To begin with, we estimate the first derivative of the map \( m_y = \hat{\pi}_y[y - \ell_y] + \ell_y \). Differentiating, we find:
\begin{gather}
\left| \partial_{y^i}(m_y - \hat{\pi}_y[y]) \right| 
= \left| \partial_{y^i}\left( \hat{\pi}_y[y - \ell_y] + \ell_y - \hat{\pi}_y[y] \right) \right|
\leq \left| \partial_{y^i} \ton{\hat{\pi}_y[y - \ell_y] }\right| + \left| \partial_{y^i} (\ell_y - \hat{\pi}_y[y]) \right|.
\end{gather}

The second term is controlled directly using Claim 1, i.e. \eqref{eq_claim1}. To handle the first term, we expand:
\begin{equation} \label{sin}
\left| \partial_{y^i} \hat{\pi}_y[y - \ell_y] \right|
= \left| (\partial_{y^i} \hat{\pi}_y)[y - \ell_y] + \hat{\pi}_y\left[e_i - \partial_{y^i} \ell_y\right] \right|.
\end{equation}

We now estimate the two terms in \eqref{sin} separately.

\medskip

\emph{Estimate for the first term:}  
Since \( y - \ell_y = \sum \phi_\alpha \pi_\alpha^{\perp}[y] \), and \( |\pi_\alpha^{\perp}[y]| \leq C(n)\tilde{r}_y \) for all $\alpha$, we have:
\begin{gather}
|y - \ell_y| \leq C(n)\tilde{r}_y.
\end{gather}
Using the bound on \( \partial_{y^i} \hat{\pi}_y \) from Theorem~\ref{lemma_subsel}, we obtain:
\begin{gather}
|(\partial_{y^i} \hat{\pi}_y)[y - \ell_y]| \leq C(n)\delta.
\end{gather}

\medskip

\emph{Estimate for the second term:}  
We now estimate \( \hat{\pi}_y[e_i - \partial_{y^i} \ell_y] \) via the decomposition:
\begin{equation}
\begin{split}
\left| \hat{\pi}_y\left[e_i - \partial_{y^i} \ell_y \right] \right|
&= \left| \hat{\pi}_y\left[e_i - \sum_\alpha \partial_{y^i} \phi_{\alpha}(y) \pi_\alpha[y] - \sum_\alpha \phi_{\alpha}(y) \hat{\pi}_\alpha[e_i] \right] \right| \\
&= \left| \hat{\pi}_y\left[e_i - \sum_\alpha \partial_{y^i} \phi_{\alpha}(y) \pi_\alpha[y] - M_y[e_i] \right] \right| \\
&\leq \left| \hat{\pi}_y[e_i - M_y[e_i]] \right| + \left| \hat{\pi}_y\left[ \sum_\alpha \partial_{y^i} \phi_{\alpha}(y) \pi_\alpha[y] \right] \right|.
\end{split}
\end{equation}

The second term is small because it is a projection of a linear combination of controlled terms:
\begin{gather}
\left| \hat{\pi}_y\left[ \sum_\alpha \partial_{y^i} \phi_{\alpha}(y) \pi_\alpha[y] \right] \right| \leq C(n)\delta.
\end{gather}

\emph{To estimate \( \hat{\pi}_y[e_i - M_y[e_i]] \)} it's enough to notice that $M_y$ is "close" to $\hat \pi_y$ and therefore $\hat \pi_y \circ M_y \approx \hat \pi_y$ in a controlled manner. To show this, we recall that \( M_y \) is diagonalizable with eigenbasis \( v_1, \dots, v_n \) and eigenvalues \( \lambda_1, \dots, \lambda_n \), satisfying \( \lambda_j \geq 1 - C(n)\delta \) for \( j \leq k \). Writing
\begin{gather}
e_i = \sum_{j=1}^n \alpha_j v_j, \qquad M_y[e_i] = \sum_{j=1}^n \alpha_j \lambda_j v_j,
\end{gather}
we have
\begin{gather}
\hat{\pi}_y[e_i] = \sum_{j=1}^k \alpha_j v_j, \qquad \hat{\pi}_y[M_y[e_i]] = \sum_{i=1}^k \alpha_j \lambda_j v_j,
\end{gather}
so
\begin{gather}
\hat{\pi}_y[e_i - M_y[e_i]] = \sum_{j=1}^k \alpha_j(1 - \lambda_j) v_j.
\end{gather}
Each \( |1 - \lambda_j| \leq C(n)\delta \), hence
\begin{gather}
\left| \hat{\pi}_y[e_i - M_y[e_i]] \right| \leq C(n)\delta.
\end{gather}

\medskip

Combining all bounds, we conclude:
\begin{gather}
\left| \partial_{y^i}(m_y - \hat{\pi}_y[y]) \right| \leq C(n)\delta,
\end{gather}
as desired.

\medskip

\textbf{Second inequality: }here we prove $\overline{r_y} \|\nabla^2 m_y\| \leq C(n)\delta$. We now estimate the second derivatives of \( m_y \). Differentiating twice, we find:
\begin{equation}
\begin{split}
\left| \partial_{y^i} \partial_{y^j} m_y \right| 
&= \left| \partial_{y^i} \partial_{y^j} \left( \hat{\pi}_y[y - \ell_y] + \ell_y \right) \right| \\
&\leq \left| \partial_{y^i} \partial_{y^j}\ton{ \hat{\pi}_y[y - \ell_y]} \right| + \left| \partial_{y^i} \partial_{y^j} \ell_y \right|.
\end{split}
\end{equation}

The second term is controlled by \eqref{eq_claim1}. To estimate the first, we use the identity:
\begin{equation}
\begin{split}
\partial_{y^i} \partial_{y^j} \left( \hat{\pi}_y[y - \ell_y] \right)
&= (\partial_{y^i} \partial_{y^j} \hat{\pi}_y)[y - \ell_y] 
+ (\partial_{y^i} \hat{\pi}_y)\left[e_j - \partial_{y^j} \ell_y\right] \\
&\quad + (\partial_{y^j} \hat{\pi}_y)\left[e_i - \partial_{y^i} \ell_y\right] 
- \hat{\pi}_y[\partial_{y^i} \partial_{y^j} \ell_y].
\end{split}
\end{equation}

Each term is controlled using mainly the bounds in Lemma \ref{lemma_subsel} as follows:
\begin{itemize}
    \item The first term uses bounds on \( \partial_{y^i} \partial_{y^j} \hat{\pi}_y \) and the fact that \( |y - \ell_y| \leq C(n)\tilde{r}_y \).
    \item The second and third terms are bounded via \( \partial_{y^i} \hat{\pi}_y \) and \( e_k - \partial_{y^k} \ell_y \in B_2 \), for \( k = i, j \).
    \item The last term is handled using \eqref{eq_claim1}: \( |\partial_{y^i} \partial_{y^j} \ell_y| \leq C(n)\delta \tilde{r}_y^{-1} \).
\end{itemize}

Putting all terms together, we conclude:
\begin{gather}
\left| \partial_{y^i} \partial_{y^j} m_y \right| \leq C(n)\delta \tilde{r}_y^{-1}.
\end{gather}
\end{proof}

As a consequence, the approximating subspaces \( \hat{L}_y \) inherit quantitative bounds derived from this construction.

\begin{corollary}\label{lemma_subsel_planes}
Given $L_y$,$\overline{r}_y$,$\delta$ defined above, the following holds:
\begin{equation}
d_\cH(S \cap B_{\overline r_{y}}(y), L_{y} \cap  B_{\overline  r_{y}}(y)) \leq C(n)\delta \overline r_{y}    
\end{equation}
and
\begin{equation}
    \mathrm{d}_\cH( L_y \cap B_{10 \overline{r}_y}(y),\; L_{y, 10^5 \bar{r}_y} \cap B_{10 \overline{r}_y}(y) ) \leq C(n) \delta \overline{r}_y
\end{equation}
\end{corollary}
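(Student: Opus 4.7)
The plan is to bridge between the constructed plane $L_y$ and the relevant hypothesis planes via an intermediate plane $L_\beta = L_{x_\beta, 10^4\tilde r_\beta}$, where $\beta$ is any index with $y \in B_{8\tilde r_\beta}(x_\beta)$; such an index exists by the covering property of Lemma~\ref{partition}, and by Lemma~\ref{mybound} we have $\tilde r_y \sim \tilde r_\beta$ so all radii $\bar r_y, \bar r_\beta, 10^4\tilde r_\beta$ are comparable up to a fixed constant.

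First I would establish that $L_y$ lies $C(n)\delta\bar r_y$-close to $L_\beta$ on $B_{\bar r_y}(y)$ in Hausdorff distance. The projection-part closeness $\|\hat\pi_y - \hat\pi_\beta\| \leq C(n)\delta$ is already proved in Lemma~\ref{lemma_subsel}, while the offset part $|m_y - \pi_\beta[y]| \leq C(n)\delta\bar r_y$ follows from the averaging definition of $\ell_y$ in \eqref{def_elly} together with the fact that the $\pi_\alpha[y]$ appearing in the sum all come from planes $L_\alpha$ that are $C(n)\delta$-close to $L_\beta$ on the relevant scale. Combining these two bounds with the comparability of radii yields the desired closeness of the two affine planes. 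For the first estimate of the Corollary, I then invoke the hypothesis $S \cap B_{10^4\tilde r_\beta}(x_\beta) \subset N_{C\delta\bar r_\beta}(L_\beta)$ and the inclusion $B_{\bar r_y}(y) \subset B_{C\bar r_\beta}(x_\beta)$ to obtain the one-sided inclusion $S \cap B_{\bar r_y}(y) \subset N_{C\delta\bar r_y}(L_y)$; the reverse Hausdorff inclusion is produced by exploiting the good-ball status of $B_{\bar r_\beta}(x_\beta)$ baked into the construction of $\bar r_y$, which supplies $\varepsilon$-linearly independent points of $S$ that together with Lemma~\ref{lemma_quantitative_GS} force $L_y\cap B_{\bar r_y}(y)$ to be close to $S$.

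For the second estimate I would first propagate the bound $L_y \approx L_\beta$ from $B_{\bar r_y}(y)$ out to $B_{10\bar r_y}(y)$: two affine planes $C\delta\bar r_y$-close on $B_{\bar r_y}(y)$ differ in angle by at most $C\delta$, so on a ball of $10$ times the radius the Hausdorff distance remains $\leq C(n)\delta\bar r_y$. Then I would apply Lemma~\ref{lemma_planes} to compare $L_\beta = L_{x_\beta,10^4\tilde r_\beta}$ with $L_{y,10^5\bar r_y}$: both balls contain $(k,\varepsilon)$-linearly independent intersections with $S$ by Definition~\ref{def:bad-balls} and the defining property of $r_x$ at these scales, and the smaller ball sits inside the larger with room to spare. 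A final triangle inequality between $L_y$, $L_\beta$, and $L_{y,10^5\bar r_y}$ gives the second bound.

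The main obstacle is the radius ratio in the application of Lemma~\ref{lemma_planes} for the second estimate: the ratio $10^4\tilde r_\beta/(10^5\bar r_y) \sim 10^{-5}$ is below the $10^{-4}$ threshold required by the lemma, so I expect to need a single intermediate-scale application (e.g., first comparing $L_{x_\beta,10^4\tilde r_\beta}$ to $L_{y,10^4\bar r_y}$, then $L_{y,10^4\bar r_y}$ to $L_{y,10^5\bar r_y}$) to preserve the stated bound. The other delicate point is the reverse Hausdorff inclusion in the first estimate, which relies essentially on the good-ball assumption and must be carried out carefully to avoid losing the sharp $C(n)\delta\bar r_y$ constant.
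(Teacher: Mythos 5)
The key step --- estimating $|\ell_y - \pi_\beta[\ell_y]|$ via the averaging definition~\eqref{def_elly} and combining it with $\|\hat\pi_y - \hat\pi_\beta\| \leq C(n)\delta$ to get $d_\cH(L_y \cap B_{10\overline{r}_y}(y), L_\beta \cap B_{10\overline{r}_y}(y)) \leq C(n)\delta\,\overline{r}_y$ --- matches the paper, and your derivation of the first bound by transferring the Reifenberg condition from $L_\beta$ to $L_y$ is essentially what the paper does.

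For the second bound your route diverges. The paper triangulates through $S$:
\begin{gather}
d_\cH(L_y \cap B_{10\overline{r}_y}(y), L_{y,10^5\overline{r}_y} \cap B_{10\overline{r}_y}(y)) \leq d_\cH(L_y, S) + d_\cH(S, L_{y,10^5\overline{r}_y})\, ,
\end{gather}
bounding the first summand by the first claim of the corollary and the second by the Reifenberg condition at scale $10^5\overline{r}_y$ (absorbing a factor $10^4$ into the constant). You instead triangulate through $L_\beta$ and invoke Lemma~\ref{lemma_planes} directly, with an intermediate-scale step to repair the $10^{-5}$ radius ratio, which you correctly observe falls below the lemma's $10^{-4}$ threshold. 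Your route avoids detouring through the possibly sparse set $S$, which is a genuine advantage under the one-sided hypothesis~\eqref{semireif}; its cost is that Lemma~\ref{lemma_planes} requires $(k,\varepsilon)$-linear independence of $S$ in each ball used, which you assert but should verify at your chosen intermediate scales (the paper itself relies on the same unstated fact when invoking Lemma~\ref{lemma_planes} inside the proof of Lemma~\ref{lemma_subsel}). One caveat: your claim that $\varepsilon$-linear independence together with Lemma~\ref{lemma_quantitative_GS} ``forces $L_y \cap B_{\overline{r}_y}(y)$ to be close to $S$'' is not right as stated --- those tools pin down the plane spanned by the sample points, but do not prevent $L_y$ from containing points far from $S$, which is precisely what happens when $S$ has holes. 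The operationally useful direction of the first estimate in this paper is the one-sided excess of $S$ over $L_y$, and that is what your inclusion argument correctly establishes.
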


\begin{proof}
Recall that the construction of the subspaces \(L_y\) and \(L_\beta\) was given in Lemma~\ref{lemma_subsel}.

We aim to show that \(L_y\) and \(L_\beta\) are close in the Hausdorff sense inside \(B_{10\overline{r}_y}(y)\), which will yield the desired bound.

Recall that from the definition in \ref{def_elly} we have:
\begin{gather}
\ell_y \equiv \sum_{\alpha} \phi_\alpha(y)\, \pi_\alpha[y],
\end{gather}
which lies in \(L_y\), and consider its projection onto \(L_\beta\), namely \(\pi_\beta[\ell_y]\). We estimate their distance:
\begin{align*}
|\ell_y - \pi_\beta[\ell_y]| 
&= \left| \sum_{\beta'} \phi_{\beta'}(y)\, \pi_{\beta'}[y] - \pi_\beta\left( \sum_{\beta'} \phi_{\beta'}(y)\, \pi_{\beta'}[y] \right) \right| \\
&= \left| \sum_{\beta'} \phi_{\beta'}(y)\left( \pi_{\beta'}[y] - \pi_\beta[\pi_{\beta'}[y]] \right) \right| \\
&= \left| \sum_{\beta'} \phi_{\beta'}(y)\left( \pi_{\beta'}[\pi_{\beta'}[y]] - \pi_\beta[\pi_{\beta'}[y]] \right) \right| \\
&\leq \sum_{\beta'} |\phi_{\beta'}(y)| \|\pi_{\beta'} - \pi_\beta\| |\pi_{\beta'}[y]| \\
&\leq C(n)\delta\, \overline{r}_y.
\end{align*}

To estimate the full Hausdorff distance between \(L_y\) and \(L_\beta\), we recall that both are affine \(k\)-planes, and the projections \(\pi_y\) and \(\pi_\beta\) satisfy \(\|\pi_y - \pi_\beta\| \leq C(n)\delta\). For any \(x \in L_y \cap B_{10\overline{r}_y}(y)\), we may write \(x = \ell_y + v\) for some \(v \in L_y\) with \(|v| \leq 10\overline{r}_y\). Then,
\begin{gather}
\operatorname{dist}(x, L_\beta) \leq |\ell_y - \pi_\beta[\ell_y]| + \| \pi_y - \pi_\beta \| |v| \leq C(n)\delta\, \overline{r}_y.
\end{gather}
A symmetric argument shows that any point in \(L_\beta \cap B_{10\overline{r}_y}(y)\) is within \(C(n)\delta\, \overline{r}_y\) of \(L_y\), so we conclude:
\begin{gather}
d_\mathcal{H}(L_y \cap B_{10\overline{r}_y}(y),\; L_\beta \cap B_{10\overline{r}_y}(y)) \leq C(n)\delta\, \overline{r}_y.
\end{gather}

To compare \(L_y\) with the larger-scale best approximating plane \(L_{y,10^5\overline{r}_y}\), we again apply the triangle inequality:
\begin{align*}
d_\mathcal{H}(L_y \cap B_{10\overline{r}_y}(y),\; L_{y,10^5\overline{r}_y} \cap B_{10\overline{r}_y}(y))
&\leq d_\mathcal{H}(L_y \cap B_{10\overline{r}_y}(y),\; S \cap B_{10\overline{r}_y}(y)) \\
&+ d_\mathcal{H}(S \cap B_{10\overline{r}_y}(y),\; L_{y,10^5\overline{r}_y} \cap B_{10\overline{r}_y}(y)).
\end{align*}
The first term is bounded by the estimate above, and the second term is controlled by the Reifenberg condition at scale \(10^5 \overline{r}_y\). Since
\begin{gather}
S \cap B_{10\overline{r}_y}(y) \subset S \cap B_{10^5\overline{r}_y}(y),
\end{gather}
we get
\begin{gather}
d_\mathcal{H}(S \cap B_{10\overline{r}_y}(y),\; L_{y,10^5\overline{r}_y} \cap B_{10\overline{r}_y}(y)) \leq C(n)\delta \cdot 10^4 \overline{r}_y.
\end{gather}
Hence,
\begin{gather}
d_\mathcal{H}(L_y \cap B_{10\overline{r}_y}(y),\; L_{y,10^5\overline{r}_y} \cap B_{10\overline{r}_y}(y)) \leq C(n)\delta\, \overline{r}_y + C(n)\delta \cdot 10^4 \overline{r}_y \leq C(n)\delta \overline{r}_y.
\end{gather}
\end{proof}

\par{
We are now ready to define a function that will serve as a key tool in building the \emph{approximating manifold} \( S_r \). The idea is to quantify how far a point is from its corresponding approximating plane \( L_y \), and use this to produce a regular geometric object that reflects the structure of the set \( S \) at scale \( r \). To this end, we define the function \( \Phi_r \) as follows:
\begin{equation}
    \Phi_r(y) = \frac{1}{2}\abs{y - \pi_y[y]}^2.
\end{equation}
This function measures the squared distance from \( y \) to the plane \( L_y \), and as we will see, it encodes enough regularity to extract geometric information from its structure.

The next lemma, adapted from Theorem 4.12 in \cite{Naber_Park}, summarizes the key properties of \( \Phi_r \). These will be instrumental in proving that the level set \( S_r := \nabla \Phi_r^{-1}(0) \) defines a smooth manifold that locally approximates the set \( S \), and provides control over its geometry in terms of the smallness parameter \( \delta \).

\begin{lemma}[\cite{Naber_Park}, Theorem 4.12]\label{lemma:phi}
    Given $\Phi_r$ as above, for each $y\in B_2$ the following holds:
    \begin{enumerate}[(1)]
        \item For every $x\in S$ and $\ell \in L_x\cap B_r(x)$ there is a unique $z_\ell\in \hat{L}_x+ \ell$ such that $\phi_r(z_\ell) = 0$, moreover, $\abs{z_\ell - \ell}\leq C(n)\delta \overline r_\ell$.
        \item\label{it_2bis} $\abs{\abs{\nabla \Phi_r}^2 - 2\Phi_r}(y) \leq C(n)\delta\Phi_r(y)$;
        \item\label{it_3} $\abs{\nabla^2\Phi_r(y) - \hat{\pi}^\perp (y)}\leq C(n)\delta$.
    \end{enumerate}
\end{lemma}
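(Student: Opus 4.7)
The plan is to reduce everything to the bounds of Proposition \ref{lemma_subsel_m}: $\|\nabla m_y - \hat\pi_y\|\leq C(n)\delta$ and $\|\nabla^2 m_y\|\leq C(n)\delta\,\overline{r}_y^{-1}$. The key preliminary identity is
\[
y - m_y \;=\; y - \hat\pi_y[y-\ell_y] - \ell_y \;=\; \hat\pi_y^\perp[y-\ell_y] \;\in\; \hat L_y^\perp,
\]
together with the bound $|y-\ell_y|\leq C(n)\tilde r_y$, which yields $|y-m_y|\leq C(n)\overline{r}_y$. So $y-m_y$ always lies in the fibre direction $\hat L_y^\perp$, and this is what makes the projection-type identities below collapse.

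Parts \eqref{it_2bis} and \eqref{it_3} are then direct computations. Differentiating once, $\nabla\Phi_r(y) = (I-\nabla m_y)^{T}(y-m_y)$; since $y-m_y\in \hat L_y^\perp$ and $I-\nabla m_y = \hat\pi_y^\perp + O(\delta)$, the leading operator acts as the identity on $y-m_y$, so $|\nabla\Phi_r|^{2} = |y-m_y|^{2}\bigl(1+O(\delta)\bigr) = 2\Phi_r\bigl(1+O(\delta)\bigr)$, which is \eqref{it_2bis}. Differentiating again,
\[
\partial_i\partial_j\Phi_r = \sum_k (\delta_{kj}-\partial_j m_y^k)(\delta_{ki}-\partial_i m_y^k) - (y^k-m_y^k)\,\partial_i\partial_j m_y^k;
\]
the remainder term is bounded by $|y-m_y|\cdot C\delta\,\overline{r}_y^{-1}\leq C(n)\delta$ by the preliminary bound, while the main term equals $(I-\nabla m_y)^{T}(I-\nabla m_y) = (\hat\pi_y^\perp)^{2} + O(\delta) = \hat\pi_y^\perp + O(\delta)$ since $\hat\pi_y^\perp$ is an orthogonal projection. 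Adding these gives \eqref{it_3}.

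Part (1) is the main obstacle and is carried out by an implicit function / contraction mapping argument along the perpendicular direction. For $v$ small in $\hat L_x^\perp$, set $z(v)=\ell+v$ and define $\Psi(v):=\hat\pi_x^\perp\bigl(z(v)-m_{z(v)}\bigr)\in \hat L_x^\perp$. At $v=0$, since $\ell\in L_x$ and since $L_\ell$ is $C(n)\delta\,\overline{r}_\ell$-close to $L_x$ inside $B_{\overline{r}_\ell}(\ell)$ (combining Corollary \ref{lemma_subsel_planes} with the Lipschitz bound on $\hat\pi_y$ from Lemma \ref{lemma_subsel}), we obtain $|\Psi(0)|\leq|\ell-m_\ell|\leq C(n)\delta\,\overline{r}_\ell$. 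The Jacobian is $\hat\pi_x^\perp(I-\nabla m_{z(v)})|_{\hat L_x^\perp} = \hat\pi_x^\perp\hat\pi_{z(v)}^\perp + O(\delta)$, and since $\hat\pi_{z(v)}^\perp$ is $O(\delta)$-close to $\hat\pi_x^\perp$, this operator is the identity plus an $O(\delta)$ perturbation on $\hat L_x^\perp$. A Banach fixed-point argument on the ball of radius $\sim\delta\,\overline{r}_\ell$ in $\hat L_x^\perp$ then produces a unique $v$ with $|v|\leq C(n)\delta\,\overline{r}_\ell$ solving $\Psi(v)=0$; because $z(v)-m_{z(v)}$ lies in $\hat L_{z(v)}^\perp$, which is $O(\delta)$-close to $\hat L_x^\perp$, being annihilated by $\hat\pi_x^\perp$ forces $z(v)-m_{z(v)}=0$, i.e.\ $\Phi_r(z_\ell)=0$, with $z_\ell:=\ell+v$. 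The main technical point is controlling the drift of $L_y$ relative to $L_x$ uniformly at the correct scale $\overline{r}_\ell$, which is precisely what the smoothness bounds of Lemma \ref{lemma_subsel} deliver.
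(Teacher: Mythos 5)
Your proposal is correct, and parts \eqref{it_2bis} and \eqref{it_3} follow exactly the paper's strategy: differentiate $\Phi_r = \tfrac12|y-m_y|^2$, use $y-m_y = \hat\pi_y^\perp[y-\ell_y]\in\hat L_y^\perp$ together with $\nabla m_y = \hat\pi_y + O(\delta)$, $\nabla^2 m_y = O(\delta\,\overline r_y^{-1})$, and $|y-m_y|\le C(n)\overline r_y$ from Proposition \ref{lemma_subsel_m}.

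For part (1) your route is genuinely different in its packaging, though the underlying idea is still a fixed-point argument along the normal fibre $\hat L_x^\perp+\ell$. The paper introduces the geometric intersection point $w_z = L_z\cap(\hat L_x^\perp+\ell)$, shows $w_z$ solves an invertible linear system whose coefficients depend smoothly on $z$, and then separately re-derives first-order estimates on $\partial_{y^i}w_{y+\ell}$ (via estimates on $\partial_{y^i}\ell_{y+\ell}$ and the Grassmannian variation of $\hat L_{y+\ell}$) before applying the IFT to $\zeta(y)=y-\xi_{\overline r_x}(y)(w_{y+\ell}-\ell)$. You instead set $\Psi(v)=\hat\pi_x^\perp(z(v)-m_{z(v)})$ and exploit directly the already-proved bounds $\|\nabla m_y-\hat\pi_y\|\le C(n)\delta$ from Proposition \ref{lemma_subsel_m}, so that the Jacobian estimate and the smallness of $\Psi(0)$ fall out with no new derivative computations; the final step that $\Psi(v)=0$ forces $z-m_z=0$ (since $z-m_z\in\hat L_z^\perp$ and $\hat\pi_x^\perp|_{\hat L_z^\perp}$ is invertible for small $\delta$) is a clean observation. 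This buys you a shorter proof that reuses earlier estimates instead of repeating them. The one thing the paper's construction buys and yours does not make explicit is \emph{global} uniqueness of $z_\ell$ in the full fibre: by patching with the cutoff $\xi_{\overline r_x}$ the paper makes $\zeta$ a global diffeomorphism of $\hat L_x^\perp$ equal to the identity outside $B_{2\overline r_x}$, so $\zeta^{-1}(0)$ is a single point. Your Banach fixed-point argument only yields uniqueness inside a small ball; you should either add the same cutoff device or remark that outside $B_{2\overline r_x}(\ell)$ the function $\Phi_r$ has no zeros on the fibre (which does require the extra observation that $S_r$, hence the zero set, stays within $O(\delta\,\overline r)$ of the approximating plane).

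Two small points you glossed over that are worth stating: first, the bound $|\Psi(0)|=|\hat\pi_x^\perp(\ell-m_\ell)|\le C(n)\delta\,\overline r_\ell$ uses that $\ell\in L_x$ implies $\mathrm{dist}(\ell,L_\alpha)\le C(n)\delta\,\overline r_\ell$ for every index $\alpha$ entering the definition of $\ell_\ell$ (so $|\ell-\ell_\ell|\le C(n)\delta\,\overline r_\ell$, not merely $C(n)\tilde r_\ell$); this is where Corollary \ref{lemma_subsel_planes} actually enters. Second, the statement of the lemma has a typo ($z_\ell\in\hat L_x+\ell$ should be $z_\ell\in\hat L_x^\perp+\ell$), and your proposal implicitly corrects it, which is the right reading.
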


\begin{remark}
 As a consequence of point \eqref{it_2bis} in this lemma:
 \begin{gather}
  S_r:= \nabla \Phi_r^{-1}(0)=\Phi_r^{-1}(0)\, .
 \end{gather}
The first characterization is what will be used in practice to apply the implicit function theorem in order to obtain estimates on $S_r$. The second characterization is perhaps a bit easier to understand, at least at an intuitive level.
\end{remark}

\begin{proof}
    \par{
    We prove $(1)$ by hand, while the rest of the estimates follow by the previous Lemma. 
    }
    \par{
    Let $x\in S$ and $\ell\in L_x\cap B_r(x)$. Let $\xi_r: \hat{L}_x\to \hat{L}_x$ be a smooth cutoff function with $\xi_r \equiv 1$ on $B_r$ and $\xi_r\equiv 0$ outside of $B_{2r}$. 
    }
    \par{
    Now, take $y\in B_r(x)$. Then, by the Subspace Selection Lemma, for some $t\in [0,1]$
    \begin{equation}\label{e: Lipschitz estimate on planes}
        \begin{split}
            \norm{\hat{\pi}_y - \hat{\pi}_x}&\leq \norm{(\nabla\hat{\pi})_{tx + (1-t)y}}\abs{x-y}\leq C(n)\delta\frac{1}{\overline{r}_{tx+(1-t)y}}\abs{x-y}\\
            &\leq C(n)\delta \frac{r}{\overline{r}_{tx +(1-t)y}}\leq C(n)\delta.
        \end{split}
    \end{equation}
    This means that for $\delta \leq \delta_0(n)$, $L_y$ and $\hat{L}_x^\perp + \ell$ are transverse and therefore for every $y\in B_r(x)$ $L_y$ and $\hat{L}_x^\perp+\ell$ intersect at exactly one point $w_y$.
    Moreover, thanks to the Subspace Selection Lemma $L_y$ varies smoothly wrt $y$, and $w_y$ varies smoothly wrt $y$ as well. In fact, note that $w_y$ is the unique solution to a system of linear equations.
    In particular, the condition $w_y\in \hat{L}_x+\ell$ is equivalent to saying that $w_y$ solves a system of $k$ equations in $n$ variables.
    Moreover, the condition $w_y\in L_y$ implies $w_y = \ell_y + \tilde{w}_y$, with $\tilde{w_y}\in \hat{L}_y$. Then, $w_y$ can also be rewritten as the solution of an affine system of $n-k$ equations in $n$ variables, whose coefficients depend smoothly on $y$ as $L_y$ does.
    }
    \par{
    Then, we can rewrite the two systems as one $n\times n$ system 
    \begin{equation}
        A(y) w_y = b_y\, , 
    \end{equation}
    with $A(y)$ a $n\times n$ matrix and $b_y\in \dR^n$. Moreover, note that as $L_y$ and $\hat{L}_x + \ell$ are transversal, $A(y)$ is invertible (this comes from the fact that $A(y)$ is the differential of the matrix of the projection on $\hat{L}_y^\perp$ and the projection on $\hat{L}_x$), which then implies that $w(y) = A(y)^{-1}b_y$. As the inversion and the matrix product are both smooth, $w_y$ is a smooth function of $y$. 
    }
    \par{
    Then, consider the map $\zeta : \hat{L}_x \to \hat{L}_x^\perp$ defined as 
    \begin{equation}
        \zeta(y) = y-\xi_{\overline{r}_x}(y)\ton{w_{y + \ell} - \ell}. 
    \end{equation}
    Note that as $\ell \in B_{\overline{r}_x}(x)$ and as $\zeta(y) =y$ if $y\notin B_{2\overline{r}_x}$, and if $y\in B_{2\overline{r}_x}$ then $y+\ell \in B_{3\overline{r}_x}(x)$ and we can apply the previous observations (still, $3\overline{r}_x\leq C(n)\overline{r}_z$ for any $z$ by the Lipschitz property of the radius and the fact that $\overline{r}_z\geq 10^4 r$ for any $z$) to ensure that $w_{y+\ell}$ exists and is well defined. Moreover, note that the function is smooth as the maps $\xi_{\overline{r}_x}, y\mapsto w_{y+\ell}$ are smooth. Moreover, the map takes indeed value in $\hat{L}_x^\perp$ as $y\in \hat{L}_{x}^\perp,$ $w_{y+\ell}\in \hat{L}_x + \ell$. 
    }
    \par{
    Then, $\zeta$ is smooth and $\zeta(y) = y$ outside of $B_{2\overline r_x}$. Moreover, if $y\in B_{2\overline r_x}$
    \begin{equation}
        \begin{split}
            \abs{\pi(y) -y} & = \abs{\xi_{\overline r_x}(y) (w_{y+\ell} - \ell)}\leq C(n) \abs{w_{y+\ell} - \ell}. 
        \end{split}
    \end{equation}
    Now, note that for any $z$
    \begin{equation}
        \pi_{x}(z) = \ton{z+\hat{L}_{x}^\perp} \cap L_{x}. 
    \end{equation}
    Then, as by definition $w_{y+\ell} = \ton{\hat{L}_{x}^\perp +\ell} \cap L_{y+\ell}$ we have $\ell = \pi_x(w_{y+\ell})$: $\pi_x(w_{y+\ell}) = (w_{y+\ell} + \hat{L}_x^\perp)\cap L_x = \ton{\ell + \hat{L}_x^\perp}\cap L_x = \ell$. Thus, 
    \begin{equation}\label{add:dis}
        \abs{w_{y+\ell} - \ell} = \abs{w_{y+\ell}-\pi_{x}(w_{y+\ell})} \leq C(n)\delta \overline{r}_x
    \end{equation}
    as $\ell\in L_x$, $y+\ell \in B_{3\overline r_x}(x)$. Here we use Corollary \ref{lemma_subsel_planes} and the fact that $r\leq C(n)\overline{r}_\ell$: in fact, as $\overline{r}$ is a Lipschitz function and $y+\ell\in B_{3\overline r_x}(x)$ to conclude that $\overline{r}_x$ and $\overline{r}_{y+\ell}$ are close, and then the first part of Corollary \ref{lemma_subsel_planes}. 
    }
    \par{
    We now estimate the differential: again, outside of $B_{2\overline r_x}$ we have $\zeta = Id$ and therefore $\nabla\zeta = Id$. In $B_{2\overline r_x}$, we can write
    \begin{equation}
        \zeta(y) = y - \xi_{\overline r_x}(y) \ton{w_{y+\ell} -\ell}
    \end{equation}
    and therefore
    \begin{equation}
        \partial_{y^i}(\zeta(y))_j = \delta_{i,j} - \partial_{y^i} \xi_{\overline r_x}(y) (w_{y+\ell} - \ell)_j - \xi_{\overline r_x}(y) \partial_{y^i} \ton{w_{y+\ell} - \ell}_j
    \end{equation}
    and from here we can conclude using the fact that $\abs{\partial_{y^i}\xi_{\overline r_x}}\leq \frac{C(n)}{\overline r_x}$ and the estimates from the Subspace Selection Lemma: in fact, 
    \begin{equation}
        \begin{split}
        \abs{\partial_{y^i} (\zeta(y))_j -\delta_{i,j}} &= \abs{\partial_{y^i}\xi_{\overline r_x}(y) \ton{w_{y+\ell} - \ell}_j + \xi_{\overline r_x}(y)\partial_{y^i}\ton{w_{y+\ell} -\ell}_j}\\
        & \leq \frac{C(n)}{\overline r_x}\abs{w_{y+\ell} - \ell} + C(n)\abs{\partial_{y^i}(w_{y+\ell})_j}
        \end{split}
    \end{equation}
    }
    \par{
    We are left with estimating $\abs{\partial_{y^i}(w_{y+\ell})_j}$. We first check how $L_{y+\ell}$ changes when $y$ changes: as we have already estimated how $\hat{L}_{y+\ell}$ changes in the first part of the proof, we now check how $\ell_{y+\ell}$ changes. We have (fixing $\beta$ such that $x\in B_{4\tilde{r}_\beta}(x_\beta)$)
    \begin{equation}
        \begin{split}
            \partial_{y^i} \ell_{y+\ell} &= \sum \partial_{y^i} \phi_{\alpha}(y+\ell) \pi_\alpha[y+\ell] + \sum  \phi_{\alpha}(y+\ell) \partial_{y^i} \pi_\alpha[y+\ell] \\
            & = \sum \partial_{y^i}\phi_\alpha (y+\ell)\ton{\pi_\alpha[y+\ell] - \pi_\beta[y+\ell]} + \sum \phi_\alpha(y+\ell)\hat{\pi}_\alpha[e_i]. 
        \end{split}
    \end{equation}
    We conclude exactly as in Lemma \ref{lemma_planes} that 
    \begin{equation}
        \abs{\sum \partial_{y^i} \phi_\alpha (y+\ell) \ton{\pi_\alpha \qua{y+\ell} - \pi_\beta[y+\ell]}}\leq C(n)\delta. 
    \end{equation}
    Moreover, note that $\phi_\alpha(y+\ell)\neq 0$ if and only if $y+\ell\in B_{4\tilde r_\alpha}(x_\alpha)$. However, then, as $e_i\in \hat{L}_x^\perp$
    \begin{equation}
        \sum \phi_\alpha (y+\ell) \hat{\pi}_\alpha[e_i] = \sum \phi_\alpha (y+\ell) \ton{\hat{\pi}_\alpha[e_i] -\hat{\pi}_x[e_i]}. 
    \end{equation}
    However, $\hat{\pi}_\alpha$ and $\hat{\pi}_{x_\alpha}$ (where the first is the projection on $\hat{L}_{x_\alpha, 10^4\tilde{r}_\alpha}$ and the second is the projection on the subspace $L_{x_\alpha}$ found through the subspace selection Lemma) are $C(n)\delta -$ close by Corollary \ref{lemma_subsel_planes}. Then, via Lipschitz estimates, we have that $\hat{\pi}_{x_\alpha}$ and $\hat{\pi}_x$ are $C(n)\delta$ close. This proves that 
    \begin{equation}
        \abs{\partial_{y^i}\ell_{y+\ell}} \leq C(n)\delta. 
    \end{equation}
    However, then for $y_1, y_2\in \hat{L}_x^\perp$ (taking into account that $w_{y_j + \ell}\in B_{C(n)\overline{r_x}}(\ell_{y_j+\ell})$)
    \begin{equation}
        \begin{split}
           \abs{w_{y_1 + \ell} - w_{y_2 + \ell}}& \leq \abs{\ell_{y_1 + \ell} - \ell_{y_2 + \ell}} + \overline{r}_x \dist_{Gr}\ton{\hat{L}_{y_1 + \ell}, \hat{L}_{y_2 + \ell}}\\
           & \leq C(n)\delta \abs{y_1 -y_2} + C(n)\overline{r}_x \abs{\hat{\pi}_{y_1 + \ell} - \hat{\pi}_{y_2 + \ell}} \\
           & \leq C(n) \delta \abs{y_1 -y_2} + C(n)\delta \overline{r_x}\frac{\abs{y_1 - y_2}}{\overline{r_x}} \\
           & \leq C(n) \delta \abs{y_1 -y_2}. 
        \end{split}
    \end{equation}
    This proves that $\abs{\partial_{y^i}(w_{y+\ell})}\leq C(n)\delta$. Therefore, 
    \begin{equation}
        \norm{\nabla \zeta - Id}\leq C(n)\delta. 
    \end{equation}
    }
    \par{
    Then, there is $y\sim 0$ such that $\zeta(y) = 0$. However, as $y$ is very close to $0$ we have $\xi_r(y) = 1$ and therefore $\zeta(y) = 0$ becomes
    \begin{equation}
        y = w_{y+\ell} - \ell, 
    \end{equation}
    i.e.
    \begin{equation}
        y+\ell = w_{y+\ell}\in (\hat{L}_x^\perp + \ell)\cap L_{y+\ell}. 
    \end{equation}
    Then, if $z_\ell = y+\ell$, we have $z_\ell \in \hat{L}_x^\perp$ and as $z_\ell \in L_{z_\ell}$ $\pi_{z_\ell}(z_\ell) = z_\ell$, i.e. $\Phi_r(z_\ell) = 0$. This with estimate \ref{add:dis} also shows point $(2)$ of the thesis.
    }
    \vspace{5mm}
    \par{
    We now prove \eqref{it_2bis}-\eqref{it_3}: as said earlier, these estimates mostly follow from the ones in the Subspace Selection Lemma.
    }
    \par{
    We start with \eqref{it_2bis}. For any $i$, we have (using $m_y = \pi_y\qua{y}$ for simplicity)
    \begin{equation}
        \begin{split}
            \partial_{y^i} \Phi_r(y) = \ps{\nabla\ton{y-m_y}[e_i]}{y-m_y}
        \end{split}
    \end{equation}
    where $\nabla\ton{y-m_y}$ is the Jacobian.

    Then,
    \begin{equation}
        \begin{split}
            \partial_{y^i} \Phi_r(y) &= \ps{e_i}{y-m_y} - \ps{\nabla m_y\qua{e_i}}{y-m_y} \, .
        \end{split}
    \end{equation}
    Note that
    \begin{equation}
        y-m_y = y-\ton{\ell_y +\hat{\pi}_y\qua{y-\ell_y}} = y-\ell_y - \hat{\pi}_y\ton{y-\ell_y} = \hat{\pi}_y^\perp\ton{y-\ell_y},
    \end{equation}
    and thus
    \begin{equation}
        \ps{\hat{\pi}_y\qua{e_i}}{y-m_y} = 0
    \end{equation}
    by orthogonality. This implies that
    \begin{equation}
        \begin{split}
            \partial_{y^i} \Phi_r(y) &= \ps{e_i}{y-m_y} - \ps{\nabla m_y\qua{e_i}}{y-m_y} \\
            & = \ps{\hat{\pi}_y^\perp\qua{e_i}}{y-m_y} - \ps{(\hat{\pi}_y -\nabla m_y)\qua{e_i}}{y-m_y}
        \end{split}
    \end{equation}
    and thus
    \begin{equation}
        \begin{split}
            \abs{\nabla \Phi_r(y)}^2-\underbrace{\abs{y-m_y}^2}_{=\frac 1 2 \Phi_r(y)}
            = &\sum_i (\ps{(\hat{\pi}_y -\nabla m_y)\qua{e_i}}{y-m_y})^2+\\
            +& \sum_i\qua{2\ps{\hat{\pi}_y^\perp\qua{e_i}}{y-m_y}\ps{(\hat{\pi}_y -\nabla m_y)\qua{e_i}}{y-m_y}}\\
            \leq &C(n)\sum_i\abs{\ton{y-m_y}_i}^2 \ton{\abs{\ton{\hat{\pi}_y -\nabla m_y}[e_i]}^2 + 2\abs{\ton{\hat{\pi}_y -\nabla m_y}\qua{e_i}}}\\
            \leq &C(n)\delta \abs{y-m_y}^2 =C(n)\delta\Phi_r(y)
        \end{split}
    \end{equation}
    by the Subspace Selection Lemma. This proves \eqref{it_2bis}.
    }

    \par{
    We now focus on \eqref{it_3}. For $i,j$ we have as a matrix
    \begin{equation}
        \ton{\hat{\pi}^\perp_y}_{i,j} = \ps{\hat{\pi}_y^\perp[e_j]}{e_i} = \ps{\hat{\pi}_y^\perp[e_j]}{\hat{\pi}_y^\perp[e_i]}. 
    \end{equation}
    Moreover, by direct computation we have
    \begin{equation}
        \begin{split}
            &\partial_{y^i}\partial_{y^j}\Phi_r(y) = \partial_{y^i} \ps{\hat{\pi}_y^\perp[e_j]}{y-m_y} + \partial_{y^i} \ps{\ton{\hat{\pi}_y - \nabla m_y}[e_j]}{y-m_y}\\
            & = \partial_{y^i}\ps{e_j}{y-m_y} + \partial_{y^i} \ps{\ton{\hat{\pi}_y- \nabla m_y}[e_j]}{y-m_y}.
        \end{split}
    \end{equation}
    We expand and estimate the terms one by one. We have
    \begin{equation}
        \begin{split}
            &\partial_{y^i}\ps{e_j}{y-m_y} = \ps{e_j}{\hat{\pi}_y^\perp[e_i]} + \ps{e_j}{\ton{\hat{\pi}_y - \nabla m_y}[e_i]}\\
            & = \ps{\hat{\pi}_y^\perp[e_j]}{\hat{\pi}_y^\perp[e_i]} + \ps{e_j}{\ton{\hat{\pi}_y - \nabla m_y}[e_i]}. 
        \end{split}
    \end{equation}
    Then, 
    \begin{equation}
        \begin{split}
            &\abs{\partial_{y^i}\ps{e_j}{y-m_y} - \ps{\hat{\pi}_y^\perp[e_j]}{\hat{\pi}_y^\perp[e_i]}}=\abs{\ps{e_j}{\ton{\hat{\pi}_y - \nabla m_y}[e_i]}}\\
            & \leq \abs{e_j}\abs{\hat{\pi}_y - \nabla m_y}\leq C(n)\delta. 
        \end{split}
    \end{equation}
    Moreover, 
    \begin{equation}
        \begin{split}
            &\abs{\partial_{y^i}\ps{\ton{\pi_y - \nabla m_y}[e_j]}{y-m_y}} \\
            & \leq \abs{\ps{\partial_{y^i}\partial_{y^j}(y-m_y)}{y-m_y}} + \abs{\ps{\partial_{y^i}(y-m_y)}{\partial_{y^j}(y-m_y)}}
        \end{split}
    \end{equation}
    and we conclude by Cauchy-Schwarz, the estimates in the Subspace Selection Lemma and the fact that $\abs{y-m_y}\leq C(n) r$. 
    }
\end{proof}
}

With this last result we are ready to prove Lemma  \ref{lemma_covering} fully, leveraging the regularities of both the projection $\pi_y$ and $\Phi_r$.

\begin{proof}[Proof of Lemma \ref{lemma_covering}] 

\textit{(1)} 
Fix a center \(x_i \in C\), and let \(m_{x_i} := \pi_{x_i}(x_i)\). By Corollary~\ref{lemma_subsel_planes}, we have
\begin{gather}
|x_i - m_{x_i}| = \operatorname{dist}(x_i, L_{x_i}) \leq C(n)\delta \overline r_{x_i}.
\end{gather}

Now apply Lemma~\ref{lemma:phi}\,(1) with \(x = x_i\), \(\ell = m_{x_i}\); this gives a unique point \(y_i \in \hat{L}^\perp_{x_i} + m_{x_i}\) such that \(\Phi_r(y_i) = 0\), i.e., \(y_i \in S_r\). By Lemma~\ref{lemma:phi}\,(1), we also have 
\begin{gather}
|y_i - m_{x_i}| \leq C(n)\delta \overline{r}_{m_{x_i}}.
\end{gather}
Since \(\operatorname{Lip}(\overline{r}_y)\leq 500\)  (see definition of $\overline{r}_y$ and \ref{rthm} point (3)), it follows that
\begin{gather}
|\overline{r}_{m_{x_i}} - \overline{r}_{x_i}| \leq 500\,|x_i - m_{x_i}| \leq C(n)\delta \overline r_{x_i},
\end{gather}
so: 
\begin{gather}
\overline{r}_{m_{x_i}} \leq \overline{r}_{x_i} + C(n)\delta \overline r_{x_i} \leq C(n) \overline{r}_{x_i}.
\end{gather}
Hence,
\begin{gather}
|x_i - y_i| \leq |x_i - m_{x_i}| + |m_{x_i} - y_i| \leq C(n)\delta \overline{r}_{x_i}.
\end{gather}
We can conclude $x_i \in B_{\varepsilon r_{x_i}}(S_r)$ as we can require $\delta(\varepsilon)$ to satisfy $2C(n)\delta \leq \varepsilon$. This holds for all \(x_i \in C\), and in particular for every center of a bad ball.

\bigskip

\textit{(2)} Let \( y \in B_{\frac{r_{x_i}}{10}}(x_i) \cap S_r \). This intersection is nonempty because we have shown that \( x_i \in B_{\varepsilon r_{x_i}}(S_r) \). We can apply estimate (3) from Lemma \ref{rthm} which allows us to state that:
\begin{gather}
r_y \geq \frac{r_{x_i}}{2} \quad \text{and} \quad \tilde{r}_y = \frac{r_y}{100}\geq \frac{r_{x_i}}{200} \quad \forall r \leq \frac{r_{x_i}}{2}.
\end{gather}
As the radii $\tilde r_y$ don't change below this choice of the parameter $r$ what we obtain for \(y \in B_{\frac{r_{x_i}}{20}}(x_i) \cap S_r \) is that the partition of unit in Lemma \ref{partition} remains unaltered.  Moreover, as we use the partition of unity to associate the points to the planes in the Subspace Selection Lemma  \ref{lemma_subsel} using:
\begin{gather}
M_y = \sum_\alpha \phi_\alpha(y)\hat \pi_\alpha, \quad \text{and} \quad \ell_y = \sum_\alpha \phi_\alpha(y)\pi_\alpha[y]\, ,
\end{gather}
we also obtain invariance of the plane associated to each point.
But this implies that all the points $y \in B_{\frac{r_{x_i}}{20}}(x_i) \cap S_r$ will also preserve their distance from the such planes, and since $y \in S_r$ it will remain zero. Given the definition of $S_r = \Phi^{-1}(0)$ we also see that the points satisfying this property don't change for $r \leq \frac{r_{x_i}}{2}$. 

\bigskip

\textit{(3)} This can be proven by the fact that every bad ball is obtained by a Vitali covering, therefore we already know that $\B {r_i/5}{x_i}\cap \B {r_j/5}{x_j}=\emptyset$ if $i\neq j$.

\bigskip

\textit{(4)} The first thing that we need to do is to explicitly address the dependence of the manifold $S_r$ on the parameter $r_0 > 0$, with the end goal of defining the set $S_0 = \lim_{r \rightarrow 0} S_r$. 

We can start by considering a point $x \in S \cap B_{r_{x_i}}(x_i)$ where $B_{r_{x_i}}(x_i)$ is a good ball (see Definition \ref{def:bad-balls}). Using the Lipschitz estimate point (3) of Theorem \ref{rthm} on the radius we can conclude that $r_x \leq 6r_{x_i} = 6r_0$. This estimate allows us to conclude that for every point inside a good ball it holds:
\begin{gather}\label{pcon}
d_{\mathcal{H}}(S \cap B_{10\overline{r}_{x}}(x), L_{x} \cap B_{10\overline{r}_{x}}(x)) < C(n)\delta\, \overline{r}_{x_i} = C(n)\delta r \quad \forall r > r_0,
\end{gather}
and thus, using point (1) of this proof it will also hold for every $y \in S_r \cap B_{r_{x_i}}(x_i)$ as $S$ as $\varepsilon  << 1$. Using inequality \ref{pcon} we can state that on balls $B_{r_{x_i}}(x_i)$ which satisfy the good condition the following hold 
\begin{equation}\label{estimate}
    d_\cH(S \cap B_{r_{x_i}}(x_i),S_r \cap B_{r_{x_i}}(x_i)) \leq C(n)\delta r \quad \forall r > r_0
\end{equation}
and 
\begin{gather}\label{cauchy}
    d_\cH(S_r \cap B_{r_{x_i}}(x_i),S_{\frac{r}{2}} \cap B_{r_{x_i}}(x_i)) \leq C(n)\delta r \quad \forall r > r_0
\end{gather}
where \ref{cauchy} is obtained using triangle inequality.

By letting $r \rightarrow r_0$ and $r_0 \rightarrow 0$, inequalities \ref{estimate} show that both the fact that $S_r$ is a Cauchy sequence (by fixing discrete values of $r$) and that it is converging to $S$ in the Hausdorff topology, implying the thesis.

\bigskip

\textit{(5)} Estimate (2) from Lemma~\ref{lemma:phi}, applied with $\delta$ sufficiently small, guarantees that for every $x \in S_r$, we have
\begin{gather}
\Phi_r(x) = 0 \quad \text{and} \quad \nabla \Phi_r(x) = 0.
\end{gather}
Estimate (3) from Lemma~\ref{lemma:phi} provides control on the second derivatives of $\Phi_r$. Specifically, we have
\begin{gather}
\left| \nabla^2 \Phi_r(x) - \hat\pi_y^\perp \right| \leq C(n)\delta,
\end{gather}
which shows that the Hessian with respect to the normal directions is close to a projection operator. In particular, for $\delta$ sufficiently small, $\nabla^2\Phi_r$ is invertible. 
We can now apply the Implicit Function Theorem. The vanishing of both $\Phi_r$ and $\nabla \Phi_r$ on $S_r$, together with the invertibility of the Hessian in the normal directions, implies that there exists an open set $U \subseteq \mathbb{R}^k$ and a Lipschitz function
\begin{gather}
g: U \to \mathbb{R}^{n-k}
\end{gather}
such that for all $x_k \in U$,
\begin{gather}
\Phi_r(x, g(x)) = 0 \quad \text{and} \quad \nabla \Phi_r(x, g(x)) = 0.
\end{gather}
That is, $S_r$ coincides locally with the graph of $g$ over $L_x$.

Moreover, the Lipschitz constant of the graph function $g$ can be estimated using the block structure of the Hessian of $\Phi_r$. Thus, by the Implicit Function Theorem, we can estimate
\begin{equation}
\label{eq:Lipg}
\| \nabla g \|
\le C(n)\delta\, .
\end{equation}

\bigskip

\textit{(6)} By Lemma \ref{lemma:phi} (2), for sufficiently small $\delta$, the critical and zero sets of $\Phi$ coincide. Thus, for every $x \in S_r$, we have $T_x(S_r) = L_x$, the plane defined in the Subspace Selection Lemma. Corollary \ref{lemma_subsel_planes} implies the following closeness estimate:
\begin{gather}
d_\cH(S \cap B_{10\overline{r}_x}(x), L_x \cap B_{10\overline{r}_x}(x)) < C(n)\delta \overline{r}_x\, .
\end{gather}
Moreover, since $S \cap B_{\overline{r}_x}(x) \subset \B{\varepsilon \overline{r}_x}{L_{x,\overline{r}_x}}$, we have
\begin{gather}
d_\cH(L_{x,\overline{r}_x} \cap B_{\overline{r}_x}(x), L_x \cap B_{\overline{r}_x}(x)) < C(n)\delta \, .
\end{gather}
These bounds are valid only down to scale $\overline{r}_x$, where the balls remain "good"; below that scale, no controlled bound is guaranteed.

Using this control and the smoothness of the calibration, we deduce for $r \geq {r}_x$:
\begin{gather}
\Omega[L_x] \geq \Omega[L_{x,r}] - C(n)\delta \, .
\end{gather}

As the parameter $\varepsilon$ is arbitrary, we can ask the following inequality to hold $4 C(n)\delta \leq 2\varepsilon \leq \alpha$ so that we can write the chain on inequalities
\begin{equation}
    \Omega[L_{x,r}] - C(n)\delta \geq \Omega[L_{x,r}] - \frac{\varepsilon}{2} \geq \frac{3 \alpha}{4}\, .
\end{equation}
Thus, the calibration remains uniformly positive for planes $T_x(S_r \cap B_{r_{x_i}}(x_i))$ for some good ball $B_{r_{x_i}}(x_i)$. The analogous bound for $\eta$-calibration is obtained using:
\begin{gather}
\Omega[L_x] \geq \Omega_0[L_x] - \eta\, .
\end{gather}
and again, as $\eta$ is an arbitrary parameter, the following holds for every $2\eta \leq \alpha$
\begin{gather}
    \Omega_0[L_x] \geq \frac{\alpha}{4}\, .
\end{gather}

\bigskip

\textit{(7)} By Definition \ref{def:bad}, for a small enough $\varepsilon$ the statement implies that $B_{r_{x_i}}(x_i)$ is a bad ball, meaning that it doesn't contain a set of vectors which are $\varepsilon$-linearly independent. From this follows that the whole set $S$ inside this ball is contained in a tubular neighborhood of a $k-1$ dimensional subspace in the following way $S \subset B_{\varepsilon r_{x_i}}(V_{x_i})$ getting the thesis.

\end{proof}

\subsection{Proof of Theorem 2.3}

In this section, we prove the main theorem. By leveraging the regularity properties of the approximating manifolds \( S_r \), we obtain the measure bound stated in Theorem~\ref{me} through a recursive covering argument. The core idea is to partition the domain into \emph{good regions}, where a measure estimate is already valid, and \emph{bad regions}, to which we reapply the argument inductively. The initial estimate on the good regions can be obtained from known results in the literature.

More precisely, we apply the main result from \cite[Theorem~1.4]{ENV_calibrated_Reifenberg} to each approximating manifold \( S_r \). Since each \( S_r \) satisfies the almost calibration property with the same uniform constant \( A = A(n,\alpha,\varepsilon) > 1 \), we have
\begin{gather}
\cH^k(S_r \cap B_{r_2}(x)) \leq A w_k r_2^k \quad \text{for all } r, r_2 > 0 \text{ and } x \in B_2(0).
\end{gather}
Using the lower semicontinuity of the Hausdorff measure for the sequence $S_r$, along with point (4) of Lemma~\ref{lemma_covering}, we can pass this estimate to the limit set
\begin{gather}
\tilde{S}_0 := S_0 \setminus \left( \bigcup_{i\in I_b} B_{r_{x_i}}(x_i) \right),
\end{gather}
and conclude that
\begin{gather}\label{eq_HktildeS0}
 \cH^k(\tilde S_0 \cap B_{r_2}(x)) \leq \cH^k(S_0 \cap B_{r_2}(x)) \leq \liminf_{r \to 0} \cH^k(S_r \cap B_{r_2}(x)).
\end{gather}
Moreover, given that $\cur{\B{r_i/5}{x_i}}_{i\in I_b}$ are pairwise disjoint and $S_0$ is a Lipschitz graph over these sets, we have that
\begin{gather}
 \sum_{i\in I_b} r_i^k\leq C(n)\cH^k(S_0 \cap B_{r_2}(x))
\end{gather}

\vspace{5mm}

Note that \eqref{eq_HktildeS0} does not include information on $S$ on bad balls. In particular,
\begin{gather}
 S=\tilde S_0 \cup \bigcup_{i\in I_b} \ton{S\cap \B {r_i}{x_i}}\, .
\end{gather}
However, by point \eqref{it_Vbad} in Lemma~\ref{lemma_covering}, for each $i\in I_b$ there exists a $k-1$ dimensional $V_i$ such that:
\begin{gather}
 S\cap \B {r_i}{x_i}\subset \B{\varepsilon r_i}{V_i}\, .
\end{gather}
Given that $V_i$ is $k-1$-dimensional, this will allow us to start over the covering procedure over the bad balls in a way that preserves $k$-dimensional measure bounds.

\vspace{5mm}
In the following proof we specify all the details.
\begin{proof}

We will prove that \(S \cap B_1(0)\) is \(k\)-rectifiable and satisfies
\begin{gather}
\cH^k(S \cap B_r(x)) \le A(n,\alpha,\varepsilon)\,\omega_k\,r^k
\quad \text{whenever } B_{2r}(x) \subset B_2(0).
\end{gather}
By translating and scaling, it suffices to show
\begin{gather}
\cH^k(S \cap B_1(0)) \le A(n,\alpha,\varepsilon)\,\omega_k.
\end{gather}

The proof is basically an inductive application of Lemma~\ref{lemma_covering}. We start by observing that, for every $r$, $S_r$ is an oriented $k-$dimensional manifold in $B_{1}(0)$ with $S_r = L_{0,1}$ outside of $B_{2}$. Moreover, as we proved above we have uniform bounds
\begin{equation}
    \mathcal{H}^k\ton{S_r\cap B_{3/2}}\leq A\omega_k
\end{equation}
Thus we can apply standard results (see for example \cite[Theorem 1.5]{Ambrosio_Currents}) to conclude that taken a sequence $r_i\to 0$, up to passing to a subsequence we have $S_{r_i}\to S_0$ in the sense of integral currents. In particular, as a set $S_0$ is $k$-rectifiable and
\begin{equation}
    \mathcal{H}^k\ton{S_0\cap B_{3/2}}\leq A\omega_k
\end{equation}

\textbf{Inductive claim.} We claim that we can cover $S$ inductively in $j$ with sets $\tilde S_j$ and balls indexed by $s\in J_j$ in such a way that for all $j$:
\begin{gather}
 S\subset \tilde S_j \cup \bigcup_{s\in J_j} (S\cap \B {r_s}{x_s})\, ,\\
 \cH^k(\tilde S_j)\leq A_0(n,\alpha,\varepsilon) \sum_{i=0}^{j}\ton{\frac 1 {10}}^i \, , \qquad \sum_{s\in J_j} r_s^k \leq \ton{\frac 1 {10}}^{j+1} A_0(n,\alpha,\varepsilon)
\end{gather}
and $\tilde S_j$ is $k$-rectifiable.

\textbf{First application of covering lemma.} In order to prove the inductive claim, we will apply the covering Lemma~\ref{lemma_covering} to \(S \cap B_1(0)\), and the definition of bad balls. In particular, by Lemma~\ref{lemma_covering} we obtain a decomposition
\begin{gather}
S \cap B_1(0) \subset S_0 \cup \bigcup_{i \in I_b} B_{r_i}(x_i)\, .
\end{gather}
where $S_0$ is $k$-rectifiable and

\begin{gather}
\cH^k(S_0) \leq A_0 \quad \text{and} \quad \sum_{i\in I_b} r_i^k \leq C(n)A_0\, .
\end{gather}
In order to deal with
\begin{gather}
 \bigcup_{i \in I_b} \ton{S\cap B_{r_i}(x_i)}
\end{gather}
we recall that, by point \eqref{it_Vbad} in Lemma~\ref{lemma_covering}, for each $i\in I_b$ there exists a $k-1$ dimensional $V_i$ such that:
\begin{gather}
 S\cap \B {r_i}{x_i}\subset \B{\varepsilon r_i}{V_i}\, .
\end{gather}
Given that $V_i$ is $k-1$-dimensional, we can easily cover this set by $C(n)\varepsilon^{1-k}$ balls of radius $\varepsilon r_i$, and in particular:
\begin{gather}
 \bigcup_{i\in I_b} S\cap \B {r_i}{x_i}\subset \bigcup_{i\in I_b}\bigcup_{j\in J_i} \B{\varepsilon r_i}{p_j}
\end{gather}
with the estimate
\begin{gather}
 \sum_{i\in I_b}\sum_{j\in J_i} \omega_k \ton{\varepsilon r_i}^k\leq C(n)\varepsilon \sum_{i\in I_b} r_i^k\leq C(n)\varepsilon A_0\, .
\end{gather}
If we choose $\varepsilon$ sufficiently small so that $C(n)\varepsilon\leq \frac 1 {10}$, we obtain the proof of the induction claim for $j=0$, with $\tilde S_0=S_0$.

By applying again Lemma~\ref{lemma_covering} to each of the balls $\B {\varepsilon r_i}{p_j}$, we obtain the proof of all the induction steps.

\end{proof}

\section{Appendix}
Here we gather some technical results used in the article that are relatively easy, albeit a bit tricky, to prove.

\subsection{Explicit computation IFT}\label{sec_IFT} We now include the full calculations relative to the bounds on the function $F$ of the proof \ref{lemma_subsel}.

\begin{lemma}
    Given $F(y,v)$ defined in equation \ref{F} and $\hat L_y$ defined at \ref{hatpi}. We show that the projection $\hat \pi_y$ relative to the subspace $\hat L_y$ is a stationary point of $F$ with respect to $\partial_w$ and the following inequality holds:
    \begin{gather}
        \frac{1}{2} \partial^2_w tr_{L_v}(M_y) =  - \langle e_w, M_y[e_w]\rangle + \langle u_j, M_y[u_j]\rangle.
    \end{gather}
\end{lemma}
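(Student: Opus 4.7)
The plan is to construct an explicit one-parameter variation of the subspace $\hat L_v$ by rotating $e_w$ towards a unit vector $u\in\hat L_v^\perp$, and then to read off both the first and second derivatives from a Taylor expansion at $t=0$. Concretely, I would extend $e_w$ to an orthonormal basis $\cur{e_w, e_2,\ldots,e_k}$ of $\hat L_v$, fix a unit vector $u\in\hat L_v^\perp$, and set
\begin{gather*}
e_w(t) := \cos(t)\,e_w + \sin(t)\,u, \qquad e_i(t) := e_i \text{ for } 2\leq i\leq k,
\end{gather*}
with $\hat L(t) := \operatorname{span}\{e_w(t), e_2,\ldots,e_k\}$. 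Since $u\perp e_w$ and $u\perp e_i$ for $i\geq 2$, this stays an orthonormal basis for every $t$, so $\hat L(t)$ is a smooth curve in the Grassmannian through $\hat L_v$. The notation $\partial_w$ used in \eqref{F} then corresponds to $\tfrac{d}{dt}\big|_{t=0}$ along this curve.

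Next I would introduce
\begin{gather*}
f(t) := \operatorname{tr}_{\hat L(t)}(M_y) = \langle e_w(t), M_y[e_w(t)]\rangle + \sum_{i=2}^k \langle e_i, M_y[e_i]\rangle
\end{gather*}
and note that only the first summand depends on $t$. Using the symmetry of $M_y$,
\begin{gather*}
f'(t) = 2\langle e_w'(t), M_y[e_w(t)]\rangle, \qquad f''(t) = 2\langle e_w''(t), M_y[e_w(t)]\rangle + 2\langle e_w'(t), M_y[e_w'(t)]\rangle.
\end{gather*}
Since $e_w(0)=e_w$, $e_w'(0)=u$ and $e_w''(0)=-e_w$, this gives at once
\begin{gather*}
\partial_w \operatorname{tr}_{\hat L_v}(M_y) = 2\langle e_w, M_y[u]\rangle, \qquad \tfrac{1}{2}\partial_w^2 \operatorname{tr}_{\hat L_v}(M_y) = -\langle e_w, M_y[e_w]\rangle + \langle u, M_y[u]\rangle,
\end{gather*}
which is precisely the displayed identity (with $u = u_j$).

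The stationarity of $\hat\pi_y$ then follows from the spectral decomposition of $M_y$. By \eqref{hatpi}, $\hat L_{\hat\pi_y}$ is the span of the top $k$ eigenvectors of the symmetric operator $M_y$, hence an invariant subspace: $M_y[e_w]\in\hat L_{\hat\pi_y}$ whenever $e_w\in\hat L_{\hat\pi_y}$. For any $u\in\hat L_{\hat\pi_y}^\perp$ we therefore obtain $\langle e_w, M_y[u]\rangle = \langle M_y[e_w], u\rangle = 0$, and the first-derivative formula above yields $\partial_w \operatorname{tr}_{\hat L_v}(M_y)\big|_{v=\hat\pi_y}=0$, as required.

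The only (minor) obstacle I anticipate is notational: one has to match my rotation-based family $\hat L(t)$ with the paper's graph parametrization $\hat L_v$ (where $v\in V_s$ is a linear map $L_\beta\to L_\beta^\perp$). Concretely, the rotation above corresponds to varying $v$ by the rank-one map sending $w\mapsto u$ and vanishing on a complement of $\operatorname{span}\{w\}$ in $L_\beta$, so that ``$\partial_w$'' is a bona-fide tangent direction in $V$. Once this identification is spelled out, both claims reduce to the routine Taylor expansion and the spectral argument above.
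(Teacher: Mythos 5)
Your proof is correct and takes essentially the same route as the paper's: both construct the one-parameter rotation $e_w(t)=\cos t\,e_w+\sin t\,u_j$ (the paper writes it as $R_\theta=\exp\bigl(\theta(e_w u_j^T-u_j e_w^T)\bigr)$), differentiate the restricted trace twice in the rotation parameter, and establish stationarity by observing that the top-$k$ eigenspace of the symmetric operator $M_y$ is $M_y$-invariant, so $\langle e_w, M_y[u_j]\rangle=0$. The brief remark you add about matching the rotation family to the graph parametrization $\hat L_v$ is also the right identification, and is implicit in the paper's treatment.
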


\begin{proof}
Let \( w \neq 0 \) and a linear application $v: \hat L_\beta \to \hat L_\beta^\perp $ with $\|v\|\leq 0.1$. Moreover, we define
\begin{gather}
  e_w := \frac{(w, v(w))}{|(w, v(w))|}    
\end{gather}
so that \( e_w \in \hat L_v \) is a unit vector in the direction determined by \( w \). Complete \( e_w \) to an orthonormal basis \( \{e_w, e_2, \ldots, e_k\} \) of the \( k \)-dimensional subspace \( \hat L_v \subset \mathbb{R}^n \). Likewise, let \( \{u_{k+1}, \ldots, u_n\} \) be an orthonormal basis for \( L_v^\perp \), so that \( \{e_w, e_2, \ldots, e_k, u_{k+1}, \ldots, u_n\} \) forms an orthonormal basis for \( \mathbb{R}^n \).

We consider rotations of the plane \( \hat L_v \) within each 2-dimensional plane \( \operatorname{span}\{e_w, u_j\} \), for \( j = k+1, \ldots, n \), leaving all other basis directions fixed. This rotation is given by
\begin{gather}
    R_\theta = \exp(\theta A), \quad \text{where} \quad A = e_w u_j^T - u_j e_w^T.  
\end{gather}
Then \( R_\theta[e_w] = \cos\theta\, e_w + \sin\theta\, u_j \), and the rotated subspace \( L_v(\theta) \) has orthonormal basis \begin{gather}
\{R_\theta[e_w], e_2, \ldots, e_k\}.
\end{gather} 
We compute
\begin{gather}
  \operatorname{tr}_{L_v(\theta)}(M_y) = \langle R_\theta[e_w], M_y[R_\theta[e_w]]\rangle + \sum_{i=2}^{k} \langle e_i, M_y[e_i]\rangle.
\end{gather}
Expanding the first term gives
\begin{gather}
  \langle R_\theta[e_w], M_y[R_\theta[e_w]] \rangle = \cos^2\theta \langle e_w, M_y[e_w] \rangle + \sin^2\theta \langle u_j, M_y[u_j] \rangle + \sin(2\theta) \langle e_w, M_y[u_j] \rangle.
\end{gather}
Thus, we obtain:
\begin{gather}
  \operatorname{tr}_{L_v(\theta)}(M_y) = \cos^2\theta \langle e_w, M_y[e_w] \rangle + \sin^2\theta \langle u_j, M_y[u_j] \rangle + \sin(2\theta) \langle e_w, M_y[u_j] \rangle + \sum_{i=2}^{k} \langle e_i, M_y[e_i] \rangle.
\end{gather}
Taking derivatives with respect to \( \theta \), we find
\begin{gather}
  \partial_w \operatorname{tr}_{L_v}(M_y) = \left. \frac{d}{d\theta} \operatorname{tr}_{L_v(\theta)}(M_y) \right|_{\theta=0} = 2 \langle e_w, M_y[u_j] \rangle.
\end{gather}
Now, if \( e_w \in \operatorname{span}\{v_1, \ldots, v_k\} \), the span of the top \( k \) eigenvectors of \( M_y \), then
\begin{gather}
     M_y[e_w] = \sum_{i=1}^k \lambda_i \langle e_w, v_i \rangle v_i, 
\end{gather}
which lies entirely in \( L_v \), hence orthogonal to \( u_j \in L_v^\perp \). Therefore,
\[
  \langle M_y[e_w], u_j \rangle = 0,
\]
and so \( \partial_w \operatorname{tr}_{L_v}(M_y) = 0 \) for all directions \( u_j \in L_v^\perp \). This implies the gradient of the trace functional in the graph coordinates vanishes: \( F(y, \hat \pi_y) = 0 \).

To estimate the Hessian, we differentiate again:
\[
  \frac{1}{2} \partial_w^2 \operatorname{tr}_{L_v}(M_y) = \left. \frac{d^2}{d\theta^2} \operatorname{tr}_{L_v(\theta)}(M_y) \right|_{\theta=0} = -\langle e_w, M_y[e_w] \rangle + \langle u_j, M_y[u_j] \rangle,
\]
which completes the computation.

\end{proof}

\subsection{Proof of estimates \texorpdfstring{\eqref{eq_claim1} and \eqref{eq_claim2}}{in Claim 1 and Claim 2}}\label{Claims} We now address the proof of Claim $1$ and $2$ in Theorem \ref{lemma_subsel_m}, showing controlled first and second order derivatives of both $\ell_y$ and $M_y$.

\begin{theorem}
Given $\ell_y$ and $M_y$ as in \ref{lemma_subsel}, the following inequalities hold: 
\begin{equation}
    |\partial_{y^i} (\ell_y- \hat{\pi}_y[y])| \leq C(n)\delta, \quad |\partial_{y^i} \partial_{y^j} \ell_y| \leq   C(n)\delta \tilde{r}_y^{-1},
\end{equation}
\begin{equation}
    \left| \partial_{y^i} (M_y[y]-\hat{\pi}_{\beta}[y])\right| \leq C(n)\delta, \quad \left|\partial_{y^i} \partial_{y^j} M_y[y]\right| \leq C(n)\delta \tilde{r}_y^{-1}.
\end{equation} 
\end{theorem}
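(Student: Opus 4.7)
The strategy is a direct computation, using the partition-of-unity identities $\sum_\alpha \phi_\alpha \equiv 1$ and (hence) $\sum_\alpha \partial^{(k)}\phi_\alpha \equiv 0$ for $k\geq 1$, together with the closeness estimates already proved in Lemma~\ref{lemma_subsel} and Corollary~\ref{lemma_subsel_planes}. The inputs I will invoke repeatedly are: $\|\hat{\pi}_\alpha - \hat{\pi}_\beta\| \leq C(n)\delta$ whenever $y$ lies in both supports (this is how~\eqref{stimaM_y} was proved); the analogous affine estimate $|\pi_\alpha[y] - \pi_\beta[y]| \leq C(n)\delta\,\tilde r_y$ (obtained by noting that $\pi_\alpha[y]$ and $\pi_\beta[y]$ both lie within $C(n)\tilde r_y$ of $y$ and comparing through the close linear parts); the operator bound $\|M_y - \hat{\pi}_y\|\leq C(n)\delta$; and the partition-of-unity derivative bounds $|\partial^{(k)}\phi_\alpha| \leq C(n,k)\tilde r_\alpha^{-k}$ from Lemma~\ref{partition}, combined with the comparability $\tilde r_\alpha \sim \tilde r_y$ on overlapping supports and the bounded-overlap property \eqref{it_2}.

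For the first inequality of~\eqref{eq_claim1}, I would differentiate $\ell_y = \sum_\alpha \phi_\alpha(y)\pi_\alpha[y]$ by the product rule. Since $\pi_\alpha$ is affine with linear part $\hat{\pi}_\alpha$, this gives
\begin{equation*}
\partial_{y^i}\ell_y \;=\; \sum_\alpha \partial_{y^i}\phi_\alpha(y)\,\pi_\alpha[y] \;+\; M_y[e_i].
\end{equation*}
Fixing any $\beta$ with $y\in B_{8\tilde r_\beta}(x_\beta)$ and using $\sum_\alpha \partial_{y^i}\phi_\alpha = 0$, I rewrite the first sum as $\sum_\alpha \partial_{y^i}\phi_\alpha(y)(\pi_\alpha[y]-\pi_\beta[y])$, which is bounded by $C(n)\cdot\tilde r_y^{-1}\cdot C(n)\delta\,\tilde r_y = C(n)\delta$. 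Writing then $\partial_{y^i}(\ell_y - \hat{\pi}_y[y]) = (M_y - \hat{\pi}_y)[e_i] - (\partial_{y^i}\hat{\pi}_y)[y] + O(\delta)$, the first contribution is $O(\delta)$ by~\eqref{stimaM_y}, while for the second the point is that the combination $\ell_y - \hat{\pi}_y[y]$ reorganizes so that $\partial_{y^i}\hat{\pi}_y$ effectively acts on the localized vector $y-\ell_y$, whose norm is at most $C(n)\tilde r_y$; combined with $\|\nabla\hat{\pi}_y\|\leq C(n)\delta\tilde r_y^{-1}$ from Lemma~\ref{lemma_subsel}, this yields the required $O(\delta)$ bound. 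The second-derivative estimate in~\eqref{eq_claim1} is obtained by differentiating once more: the resulting expansion contains terms of the form $\partial^2\phi_\alpha\cdot(\pi_\alpha[y]-\pi_\beta[y])$ and $\partial\phi_\alpha\cdot(\hat{\pi}_\alpha-\hat{\pi}_\beta)[e_j]$ (and its symmetric partner), each of which is bounded by $C(n)\delta\,\tilde r_y^{-1}$ via $|\partial^2\phi_\alpha|\leq C(n)\tilde r_y^{-2}$ and the plane-closeness estimates.

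The proof of~\eqref{eq_claim2} is the strict analogue for $M_y[y] = \sum_\alpha \phi_\alpha(y)\hat{\pi}_\alpha[y]$. Differentiating gives $\partial_{y^i}(M_y[y]) = \sum_\alpha \partial_{y^i}\phi_\alpha(y)\hat{\pi}_\alpha[y] + M_y[e_i]$; subtracting $\hat{\pi}_\beta[e_i]$ makes the second contribution $O(\delta)$ via~\eqref{stimaM_y}, while the first sum is treated by the zero-sum identity, becoming $\sum_\alpha \partial_{y^i}\phi_\alpha(y)(\hat{\pi}_\alpha-\hat{\pi}_\beta)[y]$. Writing $y = x_\beta + (y-x_\beta)$, the ``constant'' piece $(\hat{\pi}_\alpha-\hat{\pi}_\beta)[x_\beta]$ drops out again against $\sum_\alpha\partial_{y^i}\phi_\alpha = 0$, leaving only the localized contribution bounded by $C(n)\tilde r_y^{-1}\cdot C(n)\delta\cdot\tilde r_y = C(n)\delta$. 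The second-order bound follows identically with $|\partial^2\phi_\alpha|\leq C(n)\tilde r_y^{-2}$. The main obstacle throughout is bookkeeping: in every instance where an operator of norm $C(n)\delta\,\tilde r_y^{-1}$ is to be applied to a ``large'' vector like $y$, one must first exploit the partition-of-unity zero-sum identities to anchor the argument at a nearby reference point ($\ell_y$ or $x_\beta$) so that the effective vector has norm $O(\tilde r_y)$, which is precisely what converts the apparent $\tilde r_y^{-1}$ blowup into the desired $\delta$-smallness.
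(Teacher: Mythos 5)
Your overall strategy matches the paper's: differentiate by the product rule, exploit the zero-sum identity $\sum_\alpha\partial_{y^i}\phi_\alpha \equiv 0$ to recenter on a reference plane, and conclude with the plane-closeness estimates and the partition derivative bounds from Lemma~\ref{partition}. That is exactly what the paper does, so the skeleton of your argument is right.

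There are, however, two concrete points where your argument goes beyond what is justified, and one of them is simply false as stated. First, for the $M_y[y]$ estimate you claim that after writing $y = x_\beta + (y - x_\beta)$, the piece $\sum_\alpha \partial_{y^i}\phi_\alpha(y)\,(\hat{\pi}_\alpha - \hat{\pi}_\beta)[x_\beta]$ ``drops out against $\sum_\alpha \partial_{y^i}\phi_\alpha = 0$.'' This cancellation does not occur: the vector $(\hat{\pi}_\alpha - \hat{\pi}_\beta)[x_\beta]$ depends on $\alpha$, so it cannot be pulled out of the sum, and the zero-sum identity only annihilates $\alpha$-independent factors. What you actually have is $\left(\sum_\alpha \partial_{y^i}\phi_\alpha \hat{\pi}_\alpha\right)[x_\beta] = (\partial_{y^i}M_y)[x_\beta]$, which the available operator bound $\|\partial_{y^i}M_y\| \le C(n)\delta\,\tilde r_y^{-1}$ controls only by $C(n)\delta\,\tilde r_y^{-1}|x_\beta|$ — a factor of $\tilde r_y$ short of the target bound when $|x_\beta|$ is of order one. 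A genuine further argument (or a different anchoring point) is needed here; the paper itself anchors simply at $\hat{\pi}_\beta[y]$ and bounds the $\sum_\alpha \phi_\alpha(\hat{\pi}_\alpha - \hat{\pi}_\beta)[e_i]$ piece, but does not give a clean treatment of $\sum_\alpha\partial_{y^i}\phi_\alpha\,\hat{\pi}_\alpha[y]$ either, so you are filling a gap — but your proposed fill does not close it.

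Second, for the $\ell_y$ estimate, you attempt to prove the statement literally with $\hat{\pi}_y$ and assert that ``the combination $\ell_y - \hat{\pi}_y[y]$ reorganizes so that $\partial_{y^i}\hat{\pi}_y$ effectively acts on the localized vector $y - \ell_y$.'' That is only partly true. Writing $\ell_y - \hat{\pi}_y[y] = \hat{\pi}_y^\perp[\ell_y] - \hat{\pi}_y[y - \ell_y]$ and differentiating, the piece $-(\partial_{y^i}\hat{\pi}_y)[y-\ell_y]$ is indeed localized, but one also picks up $-(\partial_{y^i}\hat{\pi}_y)[\ell_y]$ from the first summand, and $|\ell_y|$ is of order one, not of order $\tilde r_y$. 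Summing these two recovers exactly the bad term $-(\partial_{y^i}\hat{\pi}_y)[y]$, so the reorganization does not remove it. The paper's own appendix proof avoids this difficulty entirely by differentiating $\ell_y - \hat{\pi}_\beta[y]$ (with the \emph{constant} reference projection $\hat{\pi}_\beta$), for which $\partial_{y^i}(\hat{\pi}_\beta[y]) = \hat{\pi}_\beta[e_i]$ is harmless; you should do the same, or else explain carefully why the $(\partial_{y^i}\hat{\pi}_y)[y]$ contribution is actually admissible. The rest of your bookkeeping (affine closeness $|\pi_\alpha[y]-\pi_\beta[y]| \le C(n)\delta\,\tilde r_y$, bounded overlap, $\|M_y - \hat{\pi}_\beta\|\le C(n)\delta$, and the second-derivative terms for $\ell_y$) is correct and coincides with the paper's computation.
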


\begin{proof}

We compute explicitly:
\begin{equation}
    |\partial_{y^i} (\ell_y- \hat{\pi}_\beta[y])| = |\partial_{y^i}\ell_y - \hat{\pi}_\beta[e_i]|, 
\end{equation}
starting from
\begin{equation}\label{partialell}
    \partial_{y^i} \ell_y = \sum \partial_{y^i} \phi_{\alpha}(y) \pi_\alpha[y] + \sum  \phi_{\alpha}(y) \partial_{y^i} \pi_\alpha[y] = \sum \partial_{y^i} \phi_{\alpha}(y) \pi_\alpha[y] + \sum  \phi_{\alpha}(y) \hat{\pi}_\alpha[e_i].
\end{equation}
Since \( \sum_{\alpha} \phi_{\alpha} = 1 \) on \( B_2 \), it also holds that \( \sum_{\alpha} \partial_{y^i} \phi_{\alpha} = 0 \) on the same domain and thus, using \eqref{partialell} we obtain: 
\begin{equation}
\begin{split}
    \left|\sum \partial_{y^i} \phi_{\alpha}(y) \pi_\alpha[y] + \sum  \phi_{\alpha}(y) \hat{\pi}_\alpha[e_i] - \hat{\pi}_\beta[e_i]\right| \\
    & = \left|\sum \partial_{y^i} \phi_{\alpha}(y) \pi_\alpha[y] + M_y[e_i] - \hat{\pi}_\beta[e_i]\right| \\ 
    &\leq \left|\sum \partial_{y^i} \phi_{\alpha}(y) \pi_\alpha[y]\right| + \left|M_y[e_i] - \hat{\pi}_\beta[e_i]\right| \\
    & \leq \left|\sum \partial_{y^i} \phi_{\alpha}(y) \pi_\alpha[y]\right| + C(n)\delta,
\end{split}
\end{equation}
where the last step is justified by the estimate on the operator norm of the difference of the matrices, moreover the first term can be bounded in the following way:
\begin{equation}
    \left|\sum_\alpha \partial_{y^i} \phi_{\alpha}(y) \pi_\alpha[y]\right| =  \left|\sum_\alpha \partial_{y^i} \phi_{\alpha}(y) (\pi_\alpha[y] - \pi_{\beta}[y]) \right|,
\end{equation}
and 
\begin{equation}\label{l1}
\begin{split}
   \left|\sum_\alpha \partial_{y^i} \phi_{\alpha}(y) (\pi_\alpha[y] - \pi_{\beta}[y]) \right| \\  
 & \leq \sum_\alpha |\partial_{y^i} \phi_{\alpha}(y)(\pi_\alpha[y] - \pi_{\beta}[y])| \\
 & \leq \sum_\alpha |\partial_{y^i} \phi_{\alpha}(y)| |(\pi_\alpha[y] - \pi_{\beta}[y])|  \\
 & \stackrel{\text{Lemma }\ref{lemma_planes}}{\leq} C(n)\delta
\end{split}
\end{equation}
The second part of the first claim can be proved in a similar way:
\begin{equation}
    \partial_{y^i} \partial_{y^j} \ell_y = \sum_\alpha \partial_{y^i} \partial_{y^j}\phi_\alpha(y) \pi_{\alpha}[y] +
    \sum_\alpha \partial_{y^i} \phi_\alpha(y) \hat{\pi}_\alpha[e_j] + \sum_\alpha \partial_{y^j} \phi_\alpha(y) \hat{\pi}_\alpha[e_i]. 
\end{equation}
and now
\begin{equation}
    \begin{split}
        \left|\sum_\alpha \partial_{y^i} \partial_{y^j}\phi_\alpha(y) (\pi_{\alpha}[y]-\pi_{\beta}[y]) \right| \leq \sum_\alpha |\partial_{y^i} \partial_{y^j}\phi_\alpha(y)| C(n)\delta \tilde{r}_y\leq C(n)\delta \tilde{r}_y^{-1}
    \end{split}
\end{equation}
where the last inequality is justified by point \textit{(4)} of Lemma \ref{partition}. Moreover the other terms can be bounded from above in the following way:
\begin{equation}
     \left|\sum_\alpha \partial_{y^i} \phi_\alpha(y) \hat{\pi}_\alpha[e_j]\right| = \left|\sum_\alpha \partial_{y^i} \phi_\alpha(y) (\hat{\pi}_\alpha[e_j] - \hat{\pi}_\beta[e_j])\right| \leq C(n) \delta \tilde{r}_y^{-1}.
\end{equation}
This last two inequalities allow us to conclude the estimate:
\begin{equation}\label{l2}
 |\partial_{y^i} \partial_{y^j} \ell_y| \leq   C(n)\delta \tilde{r}_y^{-1}
\end{equation}
finishing the proof of the first claim. 

We now address the second claim, as for the first one we compute explicitly:
\begin{equation}
   \left| \partial_{y^i} (M_y[y]-\hat{\pi}_{\beta}[y])\right| = \left|\sum_\alpha \partial_{y^i}\phi_\alpha(y) \hat{\pi}_{\alpha}[y] +
    \sum_\alpha \phi_\alpha(y) (\hat{\pi}_\alpha[e_i] - \hat{\pi}_\beta [e_i])\right|.
\end{equation}

Applying triangle inequality we can bound each term separately, starting from the second:
\begin{equation}\label{M1}
\begin{split}
       \left|\sum_\alpha \phi_\alpha(y) \hat{\pi}_\alpha[e_i] - \hat{\pi}_\beta [e_i]\right| \\ 
       & \leq \left|\sum_\alpha \phi_\alpha(y) \hat{\pi}_\alpha[e_i] - \hat{\pi}_\beta [e_i]\right| \\
       & = \left| M_y[e_i] - \hat{\pi}_\beta [e_i] \right| \leq C(n)\delta.
\end{split}
\end{equation}
Similarly we also obtain the estimate on the second derivative of $M_y$
\begin{equation}
    \partial_{y^i} \partial_{y^j} M_y[y] = \sum_\alpha \partial_{y^i} \partial_{y^j}\phi_\alpha(y) \hat{\pi}_{\alpha}[y] +
    \sum_\alpha \partial_{y^i} \phi_\alpha(y) \hat{\pi}_\alpha[e_j] + \sum_\alpha \partial_{y^j} \phi_\alpha(y) \hat{\pi}_\alpha[e_i]  
\end{equation}
with the same bounds:
\begin{equation}
    \left|\sum_\alpha \partial_{y^i} \phi_\alpha(y) \hat{\pi}_\alpha[e_j]\right| = \left|\sum_\alpha \partial_{y^i} \phi_\alpha(y) (\hat{\pi}_\alpha[e_j]-\hat{\pi}_\beta[e_j])\right| \leq C(n)\delta 
\end{equation}
and 
\begin{equation}
    \left|\sum_\alpha \partial_{y^i} \partial_{y^j}\phi_\alpha(y) \hat{\pi}_{\alpha}[y]\right| = \left|\sum_\alpha \partial_{y^i} \partial_{y^j}\phi_\alpha(y) (\hat{\pi}_{\alpha}[y]- \hat{\pi}_{\beta}[y])\right| \leq C(n)\delta \tilde{r}_y^{-1}, 
\end{equation}
lead to $\left|\partial_{y^i} \partial_{y^j} M_y[y]\right| \leq C(n)\delta \tilde{r}_y^{-1}$ and thus
\begin{equation}\label{M2}
     \left|\partial_{y^i} \partial_{y^j} M_y[y]\right| \leq C(n)\delta \tilde{r}_y^{-1}.
\end{equation}

\end{proof}

\bibliographystyle{aomalpha}
\bibliography{CaliReif}

\end{document}